\newcommand{\Z}{\mathbb{Z}}
\newcommand{\R}{\mathbb{R}}
\newcommand{\Hb}{\mathbb{H}}
\newcommand{\Pb}{\mathbb{P}}
\newcommand{\Sb}{\mathbb{S}}
\newcommand{\G}{\Gamma}
\newcommand{\g}{\gamma}
\newcommand{\Cc}{\mathcal{C}}
\newcommand{\Fc}{\mathcal{F}}
\newcommand{\Hc}{\mathcal{H}}
\newcommand{\Vc}{\mathcal{V}}
\renewcommand{\Hc}{\mathcal{H}}
\newcommand{\Pc}{\mathcal{P}}
\newcommand{\Tc}{\mathcal{T}}
\renewcommand{\O}{\Omega}
\renewcommand{\leq}{\leqslant}
\renewcommand{\geq}{\geqslant}
\theoremstyle{plain}
\newtheorem{theorem}{Theorem}[section]
\newtheorem{question}[theorem]{Question}
\newtheorem{proposition}[theorem]{Proposition}
\newtheorem{corollary}[theorem]{Corollary}
\newtheorem{lemma}[theorem]{Lemma}
\newtheorem{fact}[theorem]{Fact}
\newtheorem{theointro}{Theorem}[section]
\theoremstyle{definition}
\newtheorem{definition}[theorem]{Definition}
\theoremstyle{remark}
\newtheorem{remark}[theorem]{Remark}
\title[A small closed convex projective 4-manifold via Dehn filling]{A small closed convex projective \\ 4-manifold via Dehn filling}
\author{Gye-Seon Lee}
\address{Department of Mathematics, Sungkyunkwan University, Suwon, South Korea}
\email{gyeseonlee@skku.edu}
\author{Ludovic Marquis}
\address{Univ Rennes, CNRS, IRMAR - UMR 6625, F-35000 Rennes, France}
\email{ludovic.marquis@univ-rennes1.fr}
\author{Stefano Riolo}
\address{Dipartimento di Matematica, Universit\`a di Pisa, Italy}
\email{stefano.riolo@dm.unipi.it}
\subjclass[2010]{22E40, 53A20, 53C15, 57M50, 57N16, 57S30}
\begin{document}

\begin{abstract}
In order to obtain a closed orientable convex projective four-manifold with small positive Euler characteristic, we build an explicit example of convex projective Dehn filling of a cusped hyperbolic four-manifold through a continuous path of projective cone-manifolds.
\end{abstract}

\keywords{Real projective structure, Hyperbolic 4-manifold, Dehn filling, Euler characteristic, Cone-manifold, Hilbert geometry}

\maketitle

\section{Introduction} 

Convex projective manifolds form an interesting class of aspherical manifolds, including complete hyperbolic manifolds. We refer to \cite{Bsurvey,survey_ludo,survey_CLM} and \cite{Msurvey} for surveys on convex projective manifolds and hyperbolic 4-manifolds, respectively. This class of geometric manifolds has been studied notably in the context of deformations of geometric structures on manifolds or orbifolds (see the survey \cite{survey_CLM} and the references therein), or for its link to dynamical systems through the notion of Anosov representation \cite{convexe_div_1,CC_Opq,DGK17} (see \cite{labourie_anosov,GW12} for the notion of Anosov representation).

\medskip

A \emph{convex projective $n$-manifold} is the quotient $\Omega/_\Gamma$ of  a properly convex\footnote{A subset $\Omega$ of $\R\Pb^n$ is \emph{properly convex} if its closure $\overline{\Omega}$ is contained and convex in some affine chart.} domain $\Omega$ in the real projective space $\R\Pb^n$ by a subgroup $\Gamma$ of the projective linear group $\mathrm{PGL}_{n+1}\R$ acting freely and properly discontinuously on $\Omega$. A \emph{convex projective $n$-orbifold} is defined similarly without requiring the action of $\Gamma$ to be free. If $\Omega$ is endowed with its Hilbert metric, then $\Gamma$ acts on $\Omega$ by isometry, and the manifold (or orbifold) $\Omega/_\Gamma$ inherits a complete Finsler metric (see \cite{intro_constantin} for an introduction to Hilbert geometry). In the case that $\Omega$ is an open ellipsoid, it is isometric to the hyperbolic space $\Hb^n$,\footnote{This is in fact the projective model of the hyperbolic space, also known as the Beltrami--Cayley--Klein model.} and the quotient $\Omega/_\Gamma$ is a complete hyperbolic manifold (or orbifold). A complete hyperbolic manifold is \emph{cusped} if it is non-compact and of finite volume. A \emph{convex projective} (resp. \emph{complete hyperbolic}) \emph{structure} on a manifold $M$ is a diffeomorphism between $M$ and a convex projective (resp. hyperbolic) manifold $\Omega/_\Gamma$.

\medskip

The goal of this paper is to prove the following:

\begin{theointro} \label{thm:main}
There exists a closed orientable convex projective $4$-manifold $X$ containing 10 disjoint totally geodesic $2$-tori $\Sigma=T_1\sqcup \ldots \sqcup T_{10} \subset X$ 
such that:
\begin{enumerate}
\item\label{item:thmA_hyperbolic}  The complement $M=X\smallsetminus\Sigma$ admits a complete finite-volume hyperbolic structure.
\item\label{item:thmA_Euler} The Euler characteristic of $X$ (and of $M$) is $12$.
\item\label{item:thmA_curve} The hyperbolic manifold $M$ has a maximal cusp section in which each filling curve has length $6$.
\item\label{item:thmA_rel} The fundamental group $\pi_1 X$ is relatively hyperbolic with respect to the collection of rank-$2$ abelian subgroups $\{\pi_1T_i,\,\pi_1T'_i\}_{i}$, where $\{T'_1,\ldots,T'_{10}\}$ is another collection of disjoint, totally geodesic, $2$-tori such that each $T_i$ is transverse to each $T'_j$.
\item\label{item:thmA_cone} The hyperbolic structure $\sigma_0$ on $M$ and the convex projective structure $\sigma_{2\pi}$ on $X$ arise as limits of an analytic path $\theta\mapsto\sigma_\theta$
of projective cone-manifold structures on $X$, singular along $\Sigma$ with cone angle $\theta\in(0,2\pi)$.
\item\label{item:thmA_orb} For each integer $m\geq1$, the structure $\sigma_{\nicefrac{2 \pi}{m}}$ is the underlying cone-manifold structure of a convex projective orbifold $\Omega_m/_{\Gamma_m}$. For $m\geq2$, the group $\Gamma_m$ is relatively hyperbolic with respect to the collection of rank-$2$ abelian subgroups $\{\pi_1T_i\}_{i}$.
\end{enumerate}
\end{theointro}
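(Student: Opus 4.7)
My plan is to build the underlying topological manifold of $X$ and the hyperbolic $M$ explicitly as a finite cover of a cusped hyperbolic Coxeter $4$-orbifold. I would start from a suitable ideal right-angled polytope $P \subset \Hb^4$ (for instance constructed from the hyperbolic $24$-cell), let $W$ be the Coxeter group generated by reflections in the walls of $P$, and pass to a torsion-free finite-index subgroup $\pi_1 M < W$ chosen so that $M$ has exactly ten cusps with flat $3$-torus cross-sections containing the desired filling circles and transverse totally geodesic surfaces (yielding the $T'_i$). This immediately yields item \eqref{item:thmA_hyperbolic}; items \eqref{item:thmA_Euler} and \eqref{item:thmA_curve} are then combinatorial computations from $P$ (Euler characteristic via Gauss--Bonnet and volume, filling length via horospherical geometry; note that Dehn filling along a $D^2$-bundle over $T^2$ preserves Euler characteristic since $\chi(D^2\times T^2)=0$).

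Next I would construct the analytic family $\theta\mapsto\sigma_\theta$ of item \eqref{item:thmA_cone} via the Vinberg calculus of projective Coxeter orbifolds. One defines a one-parameter analytic family of Cartan matrices for (a modification of) $W$ in which the entries associated to the pairs of walls meeting along the filling locus vary analytically so that the dihedral angle there passes from the tangent (parabolic) configuration as $\theta\to 0^+$ to a finite angle at $\theta=2\pi$. By Vinberg's theorem, each such Cartan matrix lying in the appropriate locus produces a properly convex domain $\Omega_\theta\subset\R\Pb^4$ on which the deformed group acts as a projective reflection group. As $\theta\to 0^+$ the cone singularity recedes to infinity, recovering the complete hyperbolic structure on $M = X\smallsetminus\Sigma$; at $\theta=\nicefrac{2\pi}{m}$ (integer $m\ge 1$) the Coxeter diagram acquires an integer label $m$, realizing the orbifold $\Omega_m/_{\Gamma_m}$ of item \eqref{item:thmA_orb}; and at $\theta=2\pi$ the cone singularity disappears, producing the smooth closed convex projective manifold $X$ with the ten totally geodesic tori $\Sigma$.

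The main technical step, and the part I expect to be the principal obstacle, is showing that the properly convex domain $\Omega_\theta$ persists for every $\theta\in(0,2\pi]$. One must verify that the signature of the symmetric part of the chosen Cartan matrix does not degenerate along the path, that the holonomy in $\mathrm{PGL}_5\R$ remains discrete and faithful, and that no parabolic subgroup beyond those forced by the cusp geometry appears in the interior of the path. This requires explicit eigenvalue estimates tailored to $P$ combined with a continuity argument via Koszul's openness of the space of convex projective structures. Once this is established, items \eqref{item:thmA_rel} and the second part of item \eqref{item:thmA_orb} follow from the theory of relatively hyperbolic convex projective manifolds (Cooper--Long--Tillmann, Crampon--Marquis) applied to the parabolic subgroups stabilizing the tori $T_i$ (and, at $\theta=2\pi$, also the $T'_i$).
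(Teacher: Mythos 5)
Your general strategy --- deform a cusped hyperbolic Coxeter $4$-orbifold through projective reflection groups via a one-parameter family of Cartan matrices, and transfer the deformation to a manifold cover --- is indeed the one the paper follows. But there are two genuine gaps that would sink the argument as you have set it up.

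First, the passage to ``a torsion-free finite-index subgroup $\pi_1 M < W$ chosen so that $M$ has exactly ten cusps with flat $3$-torus cross-sections'' is exactly the step the paper is at pains to avoid. If you obtain this subgroup from Selberg's lemma you lose all control on the index, hence on $\chi(M)$, and item \eqref{item:thmA_Euler} --- the point of the theorem being the \emph{small} value $\chi=12$ --- evaporates; if you do not use Selberg, you owe an explicit construction. The paper supplies one: the Kolpakov--Slavich gluing of six copies of the ideal rectified $4$-simplex $R$ along an edge-labelled $K_6$, iterated twice and then doubled for orientability, giving an explicit degree-$72$ orbifold cover of $\mathcal{O}_R$ (which has $\chi^{\mathrm{orb}}=\nicefrac16$), whence $\chi=12$, ten torus cusps, and filling curves of length exactly $6$. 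Nothing in your proposal produces these numbers.

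Second, the choice of starting polytope is not a matter of convenience: the paper points out that some cusped hyperbolic Coxeter $4$-orbifolds (e.g.\ the pyramid $[6,3,3,3,\infty]$) are projectively \emph{rigid}, so ``a suitable ideal right-angled polytope, for instance from the $24$-cell'' must be accompanied by an actual family of Cartan matrices realising the deformation --- this is the content of the paper's explicit $10\times10$ matrix $C_t$ and Proposition \ref{prop:P_theta}. Moreover the cone angle at the filling locus is $\theta = k\alpha$ where $k$ is the number of polytopes surrounding a filling ridge in the manifold cover ($k=6$ here), so reaching the non-singular structure $\theta=2\pi$ requires the dihedral angle $\alpha$ to reach $\nicefrac{2\pi}{k}$ while the Cartan matrix remains of negative type and rank $5$; you never address this bookkeeping, which is the numerical heart of the construction. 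On the positive side, your worry about discreteness, faithfulness and proper convexity along the path is largely dissolved by Vinberg's theory itself (Theorems \ref{thm:vinberg_unique}, \ref{thm:vinberg} and \ref{theo_vinberg}): once the Cartan matrix is irreducible, of negative type and of rank $5$, and the vertex links are spherical, the reflection group is automatically discrete and divides a properly convex domain --- no eigenvalue estimates or Koszul openness are needed. Finally, for items \eqref{item:thmA_rel} and \eqref{item:thmA_orb} the paper does not invoke the general theory of relatively hyperbolic convex projective manifolds but rather the purely combinatorial Moussong--Caprace criterion (Theorem \ref{moussong_caprace}) applied to the Coxeter diagram of $W_p$, which is both more elementary and actually checkable here.
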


We refer the reader to Remark \ref{rem:nonsingular} for the meaning of totally geodesic submanifold in the real projective setting and  to Section \ref{sec:rel_hyp} for the definition and some facts on relative hyperbolicity.

\medskip

Note that the manifold $X$ does not admit a hyperbolic structure because $\pi_1 X$ contains $\Z^2$ (the tori $T_i$ and $T'_i$ are indeed $\pi_1$-injective). The hyperbolic manifold $M$ has 10 cusps, each with section a 3-torus. A \emph{filling curve} is a closed geodesic in a cusp section of $M$ (with respect to the induced flat metric) that bounds a disc in $X$.

\subsection*{Cone-manifolds and Dehn filling}

Projective cone-manifolds are singular projective manifolds generalising the more familiar hyperbolic cone-manifolds (see Definition \ref{def:cone-mfd}). The convex projective 4-manifold $X$ of Theorem \ref{thm:main} is obtained from the cusped hyperbolic 4-manifold $M$ by ``projective Dehn filling''. This is in analogy with Thurston's hyperbolic Dehn filling \cite{Tnotes}, where $\theta \mapsto\sigma_\theta$ is a path of hyperbolic cone-manifold structures on a 3-manifold $X$, singular along a link $\Sigma\subset X$. In both (hyperbolic and projective) cases, as the cone angle $\theta$ approaches $2\pi$, the projective cone-manifold structure becomes non-singular, and we get a convex projective structure on $X$. On the other extreme of the path, as $\theta$ tends to $0$, the singular locus $\Sigma$ is drilled away, giving rise to the cusps of the hyperbolic manifold $M$. 

\medskip

The projective cone-manifold structures $\sigma_\theta$ of Theorem \ref{thm:main} are singular along the tori $\Sigma$, and induce (non-singular) projective structures\footnote{A (\emph{real}) \emph{projective structure} on an $n$-manifold is a $(\mathrm{PGL}_{n+1}\R,\R\Pb^n)$-structure.}
on both $M=X\smallsetminus\Sigma$ and $\Sigma$.
The path $\theta\mapsto\sigma_\theta$ is \emph{analytic}, meaning that for each $\theta \in (0,2\pi)$, it is possible to choose a holonomy representation $\rho_\theta\in\mathrm{Hom}(\pi_1M,\mathrm{PGL}_5 \R)$ of the projective structure $\sigma_\theta|_M$ so that the function $\theta\mapsto\rho_\theta(\gamma)$ is analytic for all $\gamma \in \pi_1M$. Each torus $T_i \subset \Sigma$ has a meridian $\gamma_i\in\pi_1M$ whose holonomy $\rho_\theta(\gamma_i)\in\mathrm{PGL}_5 \R$ is conjugate to a (projective) rotation of angle $\theta$. In addition, the sequence of representations $\{\rho_{\nicefrac{2\pi}{m}}\}_m$ converges algebraically to $\rho_0$ as $m \rightarrow \infty$, and the sequence of convex sets $\{\overline{\Omega_m}\}_m$ converges to $\overline{\Hb^4}\subset\R\Pb^4$ in the Hausdorff topology.\footnote{These additional facts can be proved as in \cite[\S 12]{CLM}.}

\subsection*{New features of the result}

Theorem \ref{thm:main} is shown by an explicit construction. Since the convex projective manifold $X$ has non-zero Euler characteristic, it is indecomposable\footnote{A properly convex domain $\Omega$ of $\R\Pb^n$ is \emph{indecomposable} if it is not a convex hull of lower-dimensional domains. A convex projective manifold or orbifold $\Omega/_\Gamma$ is \emph{indecomposable} if $\Omega$ is indecomposable.} (see Fact \ref{fact:indecomposable}). It seems that, at the time of writing this paper, the literature misses concrete examples of closed indecomposable convex projective $n$-manifolds, $n\geq4$, which do not admit a hyperbolic structure.
For the moment, we know only two techniques to obtain such manifolds:
\begin{enumerate}
\item torsion-free subgroups of some discrete projective reflection groups using Vinberg’s theory \cite{V}, as shown by Benoist \cite{CD4,Benoist_quasi} and Choi and the first two authors \cite{CLM,CLM_ecima};
\item some Gromov--Thurston manifolds, as shown by Kapovich \cite{K}.
\end{enumerate}

In contrast with Theorem \ref{thm:main}, the Selberg lemma has an important role to guarantee the existence of such manifolds in all these cases. In particular, very little is known about the topology of closed convex projective manifolds. Note that the techniques involved in the construction of our $X$ are in the spirit of (1) rather than (2).

\begin{remark}
There is a clear distinction between the manifolds constructed in \cite{Benoist_quasi,K} and the ones in \cite{CD4,CLM,CLM_ecima}, including our $X$: the fundamental groups of the former are Gromov-hyperbolic, but those of the latter are not.
\end{remark}

The Euler characteristic of a closed even-dimensional manifold can be seen as a rough measure of its topological complexity. Note that a well-known conjecture states that closed aspherical 4-manifolds have Euler characteristic $\chi\geq0$, and this is certainly true in the hyperbolic case by the Gau{\ss}--Bonnet theorem. Our manifold $X$ has $\chi(X)=12$, and appears to be the closed orientable indecomposable convex projective 4-manifold with the smallest known Euler characteristic (to the best of our knowledge). In the hyperbolic case, the smallest known value of $\chi$ is $16$ \cite{CM,L}.\footnote{
Similarly to the orientable hyperbolic 4-manifolds that one gets from \cite{CM,L}, our manifold $X$ is built as the orientable double cover of a non-orientable convex projective manifold.
So the smallest known value of $\chi$ for a closed indecomposable convex projective (resp. hyperbolic) 4-manifold is currently $6$ (resp. $8$).}

\medskip

Theorem \ref{thm:main} is an effective version, in dimension four, of a result by Choi and the first two authors \cite[Theorem B]{CLM}. Let us first recall their construction, called ``convex projective generalised Dehn filling''. They build a sequence of discrete projective reflection groups $\{\G_m\}_{m\geq m_0}$ of $\mathrm{PGL}_{n+1}\R$, each acting cocompactly on a properly convex domain $\Omega_m\subset\R\Pb^n$ of dimension $n=4, 5$ or $6$, whose limit as $m\to\infty$ is a discrete hyperbolic reflection group $\G_\infty<\mathrm{Isom}(\Hb^n)$ of finite covolume. A fundamental domain of the group $\Gamma_m$ is a compact Coxeter polytope $P_m$ in $\Omega_m$ whose combinatorics does not depend on $m$. The hyperbolic Coxeter polytope $P_\infty$, instead, is combinatorially obtained from $P_m$ by substituting a ridge with an ideal vertex. In other words, the cusp of the hyperbolic orbifold $\Hb^n/_{\Gamma_\infty}$ is ``projectively filled''. By applying a refined version of Selberg’s lemma to $\Gamma_\infty$, they get a statement similar to Theorem \ref{thm:main}.\eqref{item:thmA_orb}. The difference is that $X=\O_{m_0}/_{\G'_{m_0}}$ is ``only'' an orbifold (where $\G'_{m_0}$ is a finite-index subgroup of $\G_{m_0}$). To promote $X$ to a manifold, one should then apply again the Selberg lemma, this time to $\G'_{m_0}$. Thus, our improvement is two-fold: we found an $X$ with small Euler characteristic, and a continuous (rather than discrete) family of cone-manifolds. 

\medskip

Another interesting feature of $X$ is the relative hyperbolicity of $\pi_1X$. Indeed, Gromov and Thurston \cite{bleiler_hodgson,A}\footnote{A proof of the Gromov--Thurston $2\pi$ theorem was given by Bleiler and Hodgson \cite{bleiler_hodgson} in the context of $3$-manifolds, and the same proof holds in any dimension, as explained in \cite[Section 2.1]{A}.} have shown that the fundamental group of a Dehn filling of a hyperbolic manifold with torus cusps is relatively hyperbolic with respect to the subgroups associated to the inserted tori, provided that the filling curves are longer than $2\pi$.\footnote{See also \cite{osin,groves_manning,FM2} for the geometric group theoretic generalisation of that statement.} The fundamental group of $X$ is not relatively hyperbolic with respect to the subgroups associated to the inserted tori by Theorem \ref{thm:main}.\eqref{item:thmA_rel} (see Section \ref{sec:rel_hyp}).\footnote{It is relatively hyperbolic with respect to a larger family of abelian groups.} There is no contradiction between the Gromov--Thurston $2\pi$ theorem and Theorem \ref{thm:main}.\eqref{item:thmA_curve} because $2 \pi > 6$.

\subsection*{Divisible convex domains} 

Recall that a properly convex domain $\Omega$ of $\R\Pb^n$ is \emph{divisible \emph{(}by $\Gamma$\emph{)}} if there exists a discrete subgroup $\Gamma$ of $\mathrm{PGL}_{n+1}\R$ acting cocompactly on $\Omega$. A theorem of Benoist \cite{convexe_div_1} implies that the indecomposable divisible convex domains $\Omega_m\subset\R\Pb^4$ of Theorem \ref{thm:main}.(\ref{item:thmA_orb}) are not strictly convex\footnote{A subset $\Omega$ of $\R\Pb^n$ is \emph{strictly convex} if it is properly convex and its boundary does not contain any non-trivial projective line segment.} because the groups $\Gamma_m$ of Theorem \ref{thm:main}.(\ref{item:thmA_orb}) are not Gromov-hyperbolic.

\medskip

There are very few currently known constructions of inhomogeneous indecomposable divisible non-strictly convex domains. For a complete historical account, we refer the reader to the introduction of \cite{CLM}. Here we mention only its essentials.

\medskip

The first construction of such domains is due to Benoist \cite{CD4}, and has been extended in \cite{ecima_ludo,BDL_3d_convex,CLM_ecima}. In those constructions the compact quotient $\Omega/_\G$ is homeomorphic to the union along the boundaries of finitely many submanifolds, each admitting a complete finite-volume hyperbolic structure on its interior. As a result, if $\O$ is of dimension $n$, then $\G$ is relatively hyperbolic with respect to a collection of virtually abelian subgroups of rank $n-1$.

\medskip

In \cite{CLM}, a different construction of inhomogeneous indecomposable divisible non-strictly convex domains is given by convex projective generalised Dehn filling.
In contrast with the previous examples, these are relatively hyperbolic with respect to a collection of virtually abelian subgroups of rank $n-2$.
The divisible (by $\Gamma_m$) domains $\O_m\subset\R\Pb^4$ of Theorem \ref{thm:main}.(\ref{item:thmA_orb}) are new examples of this kind.

\medskip

We point out that at the time of writing there is no example of inhomogeneous indecomposable divisible non-strictly convex domain of dimension $n$, for any $n \geqslant 9$.

\medskip

We stress that such domains, to all appearances, are linked to the geometrisation problem, i.e. putting a $(G,X)$-structure on a manifold. So far, almost all manifolds geometrised through this process are either obtained by gluing cusped hyperbolic manifolds, or by Dehn filling of a cusped hyperbolic manifold. Here, the goal is to do so with a small manifold and using cone-manifolds. It is especially important that we do not use Selberg's lemma.

\subsection*{Dehn fillings of hyperbolic manifolds}

Let us say that a closed manifold $X$ is a \emph{filling} of a manifold $M$ if there exists a codimension-2 submanifold $\Sigma\subset X$ such that the complement $X\smallsetminus\Sigma$ is diffeomorphic to $M$.
Note that the manifold $M$ is diffeomorphic to the interior of a compact manifold $\overline M$ whose boundary $\partial\overline M$ fibres in circles over $\Sigma$. Given $M$ as above, we obtain a filling by attaching to $\overline M$ the total space of a $D^2$-bundle $E\to\Sigma$ through a diffeomorphism $\partial\overline M\to\partial E$. This operation is commonly called a \emph{Dehn filling} of $M$. Any cusped hyperbolic manifold $M$ has a finite covering $M'$ with torus cusps (see e.g. \cite[Theorem 3.1]{MRS}). In other words, $\partial\overline{M'}$ consists of $(n-1)$-tori. The manifold $M'$ has typically infinitely many fillings up to diffeomorphism. 

\medskip

Thurston's hyperbolic Dehn filling theorem states that every filling of a cusped hyperbolic 3-manifold with torus cusps, except for finitely many fillings on each cusp, admits a hyperbolic structure. In dimension $n\geq4$, except for finitely many fillings on each cusp, the fundamental group of a filling of a cusped hyperbolic $n$-manifold is relatively hyperbolic with respect to a collection of subgroups virtually isomorphic to $\Z^{n-2}$, by \cite[Theorem~1.1]{osin}. Since $n\geqslant 4$, those groups contains $\Z^2$ and so those fillings do not admit any hyperbolic structure. The geometry of the remaining fillings is rather unpredictable, but it is expected that they also do not carry a hyperbolic structure. But, Theorem \ref{thm:main}.(\ref{item:thmA_hyperbolic}) and \cite[Theorem B]{CLM} show that some fillings of some cusped hyperbolic $n$-manifolds  admit a convex projective structure. This leads to the following:

\begin{question} \label{quest:filling}
Which filling of a cusped hyperbolic manifold of dimension $n\geq4$ (with torus cusps) admits a convex projective structure?
\end{question}

It is worth mentioning that almost all fillings of any cusped hyperbolic manifold with torus cusps admit a complete Riemannian metric of non-positive sectional curvature by the Gromov--Thurston $2\pi$ theorem \cite{bleiler_hodgson,A}, and an Einstein metric of negative scalar curvature by work of Anderson \cite{A} and Bamler \cite{B}. Both, in some sense, extend Thurston's 3-dimensional theorem to higher dimension (compare also with \cite{S_cuspclosing,FM2,FM}). But those theorems cannot be applied to the manifold $X$ of Theorem \ref{thm:main}, since the filling curves are too short.

\medskip

Let us also note that there is an opportune version of hyperbolic Dehn filling in dimension 4: one can sometimes fill some cusps of a hyperbolic 4-manifold and get another cusped hyperbolic 4-manifold, at the expense of drilling some totally geodesic surfaces \cite{MR,LMRY}.

\subsection*{Projective flexibility}

Thurston's hyperbolic Dehn filling theorem essentially relies on the flexibility of the complete hyperbolic structure of cusped hyperbolic 3-manifolds. The local rigidity theorem of Garland and Raghunathan \cite{GR} (see also \cite{MR2110758}), on the other hand, says that the holonomy representation $\rho$ of any cusped hyperbolic manifold $M$ of dimension $n\geq4$ has a neighbourhood in $\mathrm{Hom}(\pi_1M,\mathrm{Isom}(\Hb^n))$ consisting of conjugates of $\rho$. Now, Theorem \ref{thm:main}.(\ref{item:thmA_cone}) shows that the hyperbolic structure $\sigma_0$ on the 4-manifold $M$ is \emph{projectively flexible}, i.e. the conjugacy class of the holonomy representation $\rho_0$ of $\sigma_0$ $$[\rho_0]\in\mathrm{Hom}(\pi_1M,\mathrm{PGL}_5\R)\big/_{\mathrm{PGL}_5\R}$$
is not an isolated point. To avoid confusion with the terminology, we mention that in this definition of flexibility there is no restriction on the holonomy of the peripheral subgroups of $\pi_1M$. For instance, all cusped hyperbolic 3-manifolds are projectively flexible by the hyperbolic Dehn filling theorem. It is thus natural to ask the following question, which is a priori different from Question \ref{quest:filling}.

\begin{question} \label{quest:flexibility}
Which cusped hyperbolic 4-manifold is projectively flexible?
Is every cusped hyperbolic 4-manifold finitely covered by a projectively flexible one?\footnote{Similar considerations were 
done by Cooper, Long and Thistlethwaite \cite{CLT07} for closed hyperbolic 3-manifolds. }
\end{question}

We note that some cusped hyperbolic 4-orbifolds, for example the Coxeter pyramid $[6,3,3,3,\infty]$, are not projectively flexible \cite[Proposition 20]{V}. 

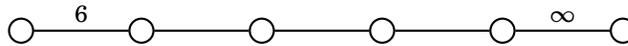
\begin{figure}[h!]
\begin{center}
\begin{tikzpicture}[thick,scale=0.8, every node/.style={transform shape}]
\node[draw, circle, inner sep=1pt, minimum size=4mm] (1) at (0,0)  {};
\node[draw, circle, inner sep=1pt, minimum size=4mm] (2) at (2,0)  {};
\node[draw, circle, inner sep=1pt, minimum size=4mm] (3) at (4,0)  {};
\node[draw, circle, inner sep=1pt, minimum size=4mm] (4) at (6,0)  {};
\node[draw, circle, inner sep=1pt, minimum size=4mm] (5) at (8,0)  {};
\node[draw, circle, inner sep=1pt, minimum size=4mm] (6) at (10,0)  {};

\draw (1) -- (2) node[above,midway]   {$6$};
\draw (2) -- (3) node[above,midway]   {};
\draw (3) -- (4) node[above,midway]   {};
\draw (4) -- (5) node[above,midway]   {};
\draw (5) -- (6) node[above,midway]   {$\infty$};
\end{tikzpicture}
\end{center}
\caption{\footnotesize The Coxeter diagram of the pyramid $[6,3,3,3,\infty]$ (see Section \ref{subsec:Coxeter_groups} for the basic terminology on Coxeter groups). The associated cusped hyperbolic 4-orbifold is projectively rigid.}
\label{fig:Coxeter_pyramid}
\end{figure}

\subsection*{On the proof}

As already said, the proof of Theorem \ref{thm:main} is constructive. We begin with the ideal hyperbolic rectified 4-simplex $R\subset\Hb^4$, which is a Coxeter polytope. By applying the techniques introduced in \cite{CLM}, we perform a ``convex projective generalised Dehn filling'' to $R$: the hyperbolic structure on the orbifold $R$ is deformed to projective structures which extend to structures of ``mirror polytope'' (see Section \ref{sec:gen_dehn_fill}) on the bitruncated 4-simplex $Q$ (see Section \ref{sec:truncated_simplex}). Note that $Q$ minus some ridges is stratum-preserving homeomorphic to $R$. This will be translated into the fact that $X$ minus some tori is homeomorphic to $M$.

\medskip

We build the hyperbolic manifold $M$ as an orbifold covering of $R$, by exploiting a construction by Kolpakov and Slavich \cite{KS}. By lifting the deformation from $R$ to $M$, we get the path $\theta\mapsto\sigma_\theta$. The manifold $X$ covers a Coxeter orbifold based on the bitruncated 4-simplex $Q$.

\medskip

Since there exist cusped orientable hyperbolic 4-manifolds $M_0$ tessellated by copies of $R$ with $\chi(M_0)<12$ \cite{KS,Sl,RiSl,KRR}, one could wonder why not to build a smaller convex projective manifold $X$ by Dehn filling such an $M_0$. A first obstruction is topological: a cusp section of those $M_0$ does not always fibre in circles. Even when all cusp sections of such an $M_0$ do fibre in circles, $M_0$ does not cover $R$ but covers the quotient of $R$ by its symmetry group. The latter is the Coxeter pyramid $[6,3,3,3,\infty]$ \cite[Lemma 2.2]{RiSl}, which is projectively rigid, so our technique does not apply in these cases.

\subsection*{Structure of the paper}

In Section \ref{sec:preliminaries} we introduce some basic concepts of projective cone-manifolds, mirror polytopes and the truncation process of the 4-simplex, and in Section \ref{sec:proof} we prove Theorem \ref{thm:main}.

\subsection*{Acknowledgements}
We would like to thank Caterina Campagnolo, Roberto Frigerio, Jason Manning, Marco Moraschini and Andrea Seppi for interesting discussions. Part of this work was done when the second and third authors visited the mathematics department of Heidelberg university, which we thank for the hospitality. We also thank the referees for their time and their valuable advice and comments.

\smallskip

{\small G.L. was supported by the European Research Council under ERC-Consolidator Grant 614733 and by DFG grant LE 3901/1-1 within the Priority Programme SPP 2026 “Geometry at Infinity”, and he acknowledges support from U.S. National Science Foundation grants DMS 1107452, 1107263, 1107367 “RNMS: Geometric structures And Representation varieties” (the GEAR Network).
L.M. acknowledges the Centre Henri Lebesgue ANR-11-LABX-0020 LEBESGUE for its support. S.R. was supported by the Mathematics Department of the University of Pisa (research fellowship “Deformazioni di strutture iperboliche in dimensione quattro”), and by the Swiss National Science Foundation (project no.~PP00P2-170560).}

\section{Preliminaries} \label{sec:preliminaries}

In this section we introduce some preliminary notions and fix some notation.

\subsection{Projective cone-manifolds} \label{sec:cone-mfds}

Riemannian $(G,X)$ cone-manifolds were introduced by Thurston \cite{Tshapes} (see also \cite{McM}). If the geometry $(G,X)$ locally embeds in real projective geometry $(\mathrm{PGL}_{n+1}\R,\R\Pb^n)$ (such as the constant-curvature geometries), a $(G,X)$ cone-manifold can be thought as a projective cone-manifold. Hyperbolic cone-manifolds of dimension 3 appear in the proofs of Thurston's hyperbolic Dehn filling theorem \cite{Tnotes} (see also \cite[Chapter 15]{Mbook}) and of the orbifold theorem \cite{CHK,BLP}.
Projective cone-manifolds were introduced by Danciger \cite{D1,D2} in the context of  geometric transition from hyperbolic to Anti-de Sitter 3-dimensional structures. Quite recently, some higher-dimensional cone-manifolds are used, in particular, in dimension 4: for hyperbolic Dehn filling or degeneration \cite{MR,LMRY}, and in the projective context of AdS-hyperbolic transition \cite{RS}.

\medskip

We now define projective cone-manifolds ``with cone angles along link singularities''. 
Our definition is in the spirit of Barbot--Bonsante--Schlenker \cite{BBS}. 

\medskip

Let $\Sb^n=\left(\R^{n+1}\smallsetminus\{0\}\right)/_{\R_{>0}}$ be the projective sphere and $\hat{\Sb} \colon\R^{n+1}\smallsetminus\{0\}\to\Sb^n$ the canonical projection. For every subset $U$ of $\R^{n+1}$, let $\Sb(U)$ denote $\hat{\Sb}(U \smallsetminus \{0\})$. With a little abuse of notation, we embed the projective spheres $\Sb^{n-2}$ and $\Sb^1$ into $\Sb^n$, $n\geq2$, as follows:
$$\Sb^{n-2}=\Sb(\{(x_1,\ldots,x_{n-1},0,0)\})\subset\Sb^n\mbox{\ \ \ and\ \ \ }\Sb^1=\Sb(\{(0,\ldots,0,x_n,x_{n+1})\})\subset\Sb^n.$$ 
Given an open subset $U\subset\Sb^1$, we define
$$\Sb^{n-2}*U\ =\bigcup_{p\in\Sb^{n-2}}\ \bigcup_{q\in U}\ [p,-p]_q\ \subset\ \Sb^n,$$
where $[p,-p]_q\subset\Sb^n$ denotes the half circle containing $q$ with endpoints $p$ and its antipode $-p$. For example, $\Sb^{n-2}*\Sb^1 = \Sb^n$.

\medskip

A \emph{projective circle} is a closed connected $(\mathrm{SL}_2\R,\Sb^1)$-manifold. Let $C$ be a projective circle. We may think of $C$ as
$$C= \Big(\coprod_{\alpha \in A }\, U_{\alpha}\Big) \Big/_\sim$$
for a collection $\{ U_{\alpha} \}_{\alpha \in A}$ of open subsets of $\Sb^1$, a collection $\{ U_{\alpha\beta} \}_{\alpha,\beta \in A}$ of open subsets $U_{\alpha\beta} \subset U_\alpha$, a collection $\{ g_{\alpha \beta} \}_{\alpha,\beta \in A}$ of diffeomorphisms $g_{\alpha \beta} \colon U_{\alpha\beta} \rightarrow U_{\beta\alpha}$ which are restrictions of elements of $\mathrm{SL}_2\R$, and the relation $x\sim g_{\alpha\beta}(x)$ for all $x\in U_{\alpha\beta}$.

\medskip

We now add the extra requirement that $C=C_\theta$ is an \emph{elliptic circle}, i.e. that the holonomy representation $\rho$ of $C_\theta$ sends a generator $\gamma$ of $\pi_1 C_\theta$ to an elliptic element $\rho(\gamma) \in \mathrm{SL}_2\R$. Passing to the universal covering $\widetilde{\Sb^1}$ of $\Sb^1$ and the covering group $\widetilde{\mathrm{SL}_2\R}$ of $\mathrm{SL}_2\R$ which acts on $\widetilde{\Sb^1}$, we lift $\rho$ to a representation $\widetilde{\rho} \colon \pi_1 C_\theta \rightarrow \widetilde{\mathrm{SL}_2\R}$. To the element $\widetilde{\rho}(\gamma) \in \widetilde{\mathrm{SL}_2\R}$ is naturally associated a unique real number $\theta > 0$ which characterises the elliptic circle $C_\theta$. Note that $\rho(\gamma)$ is conjugate to $\left(\begin{smallmatrix}
\cos\theta & -\sin\theta \\
\sin\theta & \phantom{-}\cos\theta
\end{smallmatrix}\right)$ (see \cite[Section 5.4]{Gol18}).

\medskip

By extending $g_{\alpha\beta}\in\mathrm{SL}_2\R$ to $\hat{g}_{\alpha\beta} = \left(\begin{smallmatrix}
\mathrm{Id} & 0 \\
0 & g_{\alpha\beta}
\end{smallmatrix}\right) \in\mathrm{SL}_{n+1}\R$ (thus fixing $\Sb^{n-2}\subset\Sb^n$ pointwise), 
we can define
$$\Sb^{n-2}*C_\theta = \Big(\coprod_{\alpha \in A}\, \Sb^{n-2}*U_{\alpha} \Big)\Big/_\sim$$
with the relation $x\sim \hat{g}_{\alpha\beta}(x)$ for all $x\in \Sb^{n-2}*U_{\alpha\beta}$.

\medskip

This space will be the local model for our cone-manifolds. By canonically embedding $\Sb^{n-2}$ and $C_\theta$ into $\Sb^{n-2}*C_\theta$, we have that the couple $(\Sb^{n-2}*C_\theta,\Sb^{n-2})$ is homeomorphic to $(\Sb^n,\Sb^{n-2})$. Moreover, the sphere $\Sb^{n-2}*C_\theta$ is stratified in two projective manifolds:
\begin{itemize}
\item the \emph{singular locus} $\Sb^{n-2}$, and
\item the \emph{regular locus} $(\Sb^{n-2}*C_\theta)\smallsetminus\Sb^{n-2}$.
\end{itemize}
The holonomy of a meridian\footnote{Let $N$ be a manifold and $S\subset N$ a connected submanifold of codimension 2. We call \emph{meridian} of $S$ an element $\gamma\in\pi_1(N\smallsetminus S)$ that is represented by a curve which is freely homotopic in $N\smallsetminus S$ to the boundary of a fibre of a tubular neighbourhood of $S$ in $N$.} of the singular locus in the regular locus is the holonomy of a generator of $\pi_1 C_\theta$.

\medskip

\begin{definition} \label{def:cone-mfd}
Let $X$ be an $n$-manifold and $\Sigma\subset X$ a codimension-2 submanifold. A \emph{projective cone-manifold structure} on $X$, singular along $\Sigma$ \emph{with cone angles}, is an atlas for
$X$ whose each chart has values into some $\Sb^{n-2}*C_\theta$ and sends the points of $\Sigma$ to the singular locus $\Sb^{n-2}$, and whose transition functions restrict to isomorphisms of projective manifolds on each stratum.
\end{definition}

The \emph{regular locus} of the cone-manifold $X$ is the complement $X\smallsetminus\Sigma$, while $\Sigma$ is the \emph{singular locus}. Both are (non-singular) projective manifolds. To each connected component $\Sigma_i$ of $\Sigma$ is associated a cone angle $\theta_i>0$.

\begin{remark} \label{rem:nonsingular}
A projective cone-manifold $X$ with all cone angles $\theta=2\pi$ is simply a projective manifold with a totally geodesic codimension-2 submanifold $\Sigma\subset X$. Here, by \emph{totally geodesic}, we mean that the preimage of $\Sigma$ under the universal covering of $X$ is locally mapped to codimension-2 projective subspaces of $\Sb^n$. If moreover $X$ is a convex projective (and so Finsler) manifold, then $\Sigma$ is totally geodesic in the usual sense.
\end{remark}

\subsection{Mirror polytopes} \label{sec:gen_dehn_fill}

We now introduce the main objects for our proof of Theorem \ref{thm:main}: mirror polytopes. Roughly speaking, a mirror polytope is a polytope in the projective sphere, together with a choice of a projective reflection along each of the supporting hyperplanes of the facets (satisfying some extra conditions). We refer to \cite{CLM,V} for further details. 

\medskip

A subset $P$ of $\Sb^n$ is \emph{convex} if there exists a convex cone\footnote{By a \emph{cone}, we mean a subset of $\R^{n+1}$ which is invariant under multiplication by positive scalars.} $U$ of $\R^{n+1}$ such that $P = \mathbb{S}(U)$, and moreover a convex subset $P$ is \emph{properly convex} if its closure $\overline{P}$ does not contain a pair of antipodal points. A \emph{projective $n$-polytope} is a properly convex subset $P$ of $\Sb^n$ such that $P$ has a non-empty interior and $$P = \bigcap_{i=1}^{N} \Sb(\{ v \in \R^{n+1} \mid \alpha_i(v) \leqslant 0 \})$$
where $\alpha_i$, $i=1, \dotsc, N$, are linear forms on $\R^{n+1}$. We always assume that the set of linear forms is \emph{minimal}, i.e. none of the half-spaces $\Sb(\{ v \in \R^{n+1} \mid \alpha_i(v) \leqslant 0 \})$ contains the intersection of all the others, hence the polytope $P$ has $N$ facets. A \emph{facet} (resp. \emph{ridge}) of a polytope is a face of codimension $1$ (resp. $2$). 

\medskip

A \emph{projective reflection} is an element of $\mathrm{SL}^{\pm}_{n+1}\R$ of order 2 which is the identity on a projective hyperplane. Each projective reflection $\sigma$ can be written as:
$$\sigma=\mathrm{Id}-\alpha\otimes b,\quad \textrm{in other words} \quad \sigma(v) = v - \alpha(v) b \quad \forall v \in \R^{n+1},$$
where $\alpha$ is a linear form on $\R^{n+1}$ and $b$ is a vector in $\R^{n+1}$ such that $\alpha(b)=2$. The projective hyperplane $\Sb (\ker(\alpha))$ is called the \emph{support} of $\sigma$. A \emph{projective rotation} is an element of $\mathrm{SL}_{n+1}\R$ which is the identity on a subspace $H \subset \R^{n+1}$ of codimension 2 and whose induced linear map from $\R^{n+1}/_H$ to itself is conjugate to a matrix 
$\left(\begin{smallmatrix}
\cos\theta & -\sin\theta \\
\sin\theta & \phantom{-}\cos\theta
\end{smallmatrix}\right)$
with $0 < \theta < 2 \pi$. The real number $\theta$ is called the \emph{angle} of rotation.

\begin{definition}\label{def:mirror_poly}
A \emph{mirror polytope} is a pair consisting of a projective polytope $P$ of $\Sb^n$ and a finite collection of projective reflections $\{ \sigma_s = \mathrm{Id} - \alpha_s \otimes b_s \}_{s \in S}$ with $\alpha_s(b_s)=2$ such that: 
\begin{itemize}
\item The index set $S$ consists of all the facets of $P$.
\item For each facet $s \in S$, the support of $\sigma_s$ is the supporting hyperplane of $s$.
\item For every pair of distinct facets $s,t$ of $P$,
$\alpha_s(b_t)$ and $\alpha_t(b_s)$ are either both negative or both zero.
\end{itemize}
\end{definition}

\begin{remark}
The third item of Definition~\ref{def:mirror_poly} may seem a bit awkward at first glance. In fact, \cite[Proposition~6]{V} shows that the third item holds when the group $\Gamma$ generated by $\{ \sigma_s \}_{s \in S}$ satisfies the condition:
$$
\gamma\mathrm{Int}(P) \cap \mathrm{Int} (P) = \varnothing, \quad \forall \gamma \in \Gamma \smallsetminus\{ \mathrm{Id} \},
$$
where $\mathrm{Int}(P)$ denotes the interior of $P$.
\end{remark}

Note that the reflections $\sigma_s$ of $P$ determine the couples $\{ (\alpha_s,b_s)  \}_{s \in S}$ up to $\alpha_s \mapsto \lambda_s \alpha_s$ and $b_s \mapsto \lambda_s^{-1} b_s$ for $\lambda_s >0$, because $P$ is defined by $\alpha_i \leqslant 0$. Moreover, $\alpha_s(b_t) \alpha_t(b_s) = 4 \cos^2 \theta$ for some $\theta \in (0,\nicefrac{\pi}{2}]$ if and only if the product $\sigma_s \sigma_t$ is a rotation of angle $2\theta$. The \emph{dihedral angle} of a ridge $s \cap t$ of a mirror polytope is $\theta$ if $\sigma_s \sigma_t$ is a rotation of angle $2\theta$, and $0$ otherwise. A \emph{Coxeter polytope} is a mirror polytope whose dihedral angles are submultiples of $\pi$, i.e. each dihedral angle is $\nicefrac{\pi}{m}$ for some integer $m \geqslant 2$ or $m=\infty$.

\medskip

For any number $N \in \mathbb{N}$, we set $[\![ N ]\!] := \{ 1, \dotsc, N\}$. An $N \times N$ matrix $A = (A_{ij})_{i, j \in  [\![ N ]\!]}$ is \emph{Cartan} if $(i)$ $A_{ii} = 2$ for all $i \in  [\![ N ]\!]$, $(ii)$ $A_{ij} \leqslant 0$, for all $i \neq j$, and $(iii)$ $A_{ij} =0 \Leftrightarrow A_{ji}=0 $, for all $i \neq j$. A Cartan matrix $A$ is \emph{irreducible} if it is not the direct sum of smaller matrices (after any reordering of the indices). Every Cartan matrix $A$ is obviously the direct sum of irreducible Cartan matrices $A'_1 \oplus \dotsm \oplus A'_k$. Each irreducible Cartan matrix $A'_i$, $i = 1, \dotsc, k$, is called a \emph{component} of $A$. If $x = (x_1, \dotsc, x_N)$ and $y = (y_1, \dotsc, y_N) \in \mathbb{R}^N$, we write $x > y$ if $x_i > y_i$ for all $i \in  [\![ N ]\!]$, and $x \geqslant y$ if $x_i \geqslant y_i$ for all $i \in  [\![ N ]\!]$.

\begin{proposition}[{Vinberg \cite[Theorem 3]{V}}]\label{prop:cartan_type}
If $A$ is an irreducible Cartan matrix of size $N \times N$, then exactly one of the following holds:
\begin{itemize}
\item[$\mathrm{(\!(P)\!)}$] $(i)$ The matrix $A$ is nonsingular, and $(ii)$ for every $x \in \R^N$, if $Ax \geqslant 0$, then $x>0$ or $x=0$.

\item[$\mathrm{(\!(Z)\!)}$] $(i)$ The rank of $A$ is $N-1$, $(ii)$ there exists a vector $y \in \R^N$ such that $y >0$ and $Ay =0$, and $(iii)$ for every $x \in \R^N$, if $Ax \geqslant 0$, then $Ax=0$.

\item[$\mathrm{(\!(N)\!)}$] $(i)$ There exists a vector $y \in \R^N$ such that $y >0$ and $Ay <0$, and $(ii)$ for every $x \in \R^N$, if $Ax \geqslant 0$ and $x \geqslant 0$, then $x=0$.
\end{itemize}
We say that $A$ is of \emph{positive, zero} or \emph{negative type} if $\mathrm{(\!(P)\!)}$, $\mathrm{(\!(Z)\!)}$ or $\mathrm{(\!(N)\!)}$ holds, respectively. 
\end{proposition}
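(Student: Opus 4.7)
The plan is to reduce the proposition to the Perron--Frobenius theorem applied to the nonnegative matrix $B := 2I - A$. Cartan conditions (i) and (ii) guarantee that $B$ has zero diagonal and nonnegative off-diagonal entries, and condition (iii), combined with the assumed irreducibility of $A$, makes $B$ irreducible in the Perron--Frobenius sense: the directed graph with an arrow $i\to j$ exactly when $B_{ij}>0$ is strongly connected, since (iii) forces nonzero off-diagonal entries to come in symmetric pairs while the resulting symmetric graph must be connected (otherwise $A$ would split as a direct sum). Perron--Frobenius then gives a simple real eigenvalue $\lambda \geq 0$ equal to the spectral radius of $B$, together with strictly positive right and left eigenvectors $v,w\in\R^N$ satisfying $Av=(2-\lambda)v$ and $A^Tw=(2-\lambda)w$, and every other eigenvalue of $B$ has modulus at most $\lambda$. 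The three cases of the proposition will correspond to the trichotomy $\lambda>2$, $\lambda=2$, $\lambda<2$, which is clearly exclusive and exhaustive.

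The two cases $\lambda \geq 2$ are handled uniformly by pairing against the left eigenvector $w$. If $\lambda>2$ (case ((N))), setting $y:=v>0$ gives $Ay=(2-\lambda)v<0$, proving ((N))(i); and if $Ax\geq 0$ with $x\geq 0$, then
\[
0 \,\leq\, \langle w,\, Ax \rangle \,=\, (2-\lambda)\langle w,\, x\rangle \,\leq\, 0,
\]
so $\langle w,x\rangle = 0$, forcing $x = 0$ since $w > 0$, which proves ((N))(ii). If $\lambda = 2$ (case ((Z))), simplicity of $\lambda$ as an eigenvalue of $B$ is exactly the statement that $0$ is a simple eigenvalue of $A$, yielding $\mathrm{rank}(A) = N-1$; choosing $y := v$ settles ((Z))(ii); and if $Ax \geq 0$ then $\langle w, Ax\rangle = 0$, so a nonnegative vector paired to zero against a strictly positive weight must itself vanish, giving $Ax = 0$.

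The delicate case is ((P)) ($\lambda < 2$), and this is where I expect the main subtlety to lie. Since every eigenvalue of $A = 2I - B$ has positive real part, $A$ is nonsingular and the Neumann series
\[
A^{-1} \,=\, \tfrac{1}{2}\sum_{k\geq 0}\left(\tfrac{B}{2}\right)^{\!k}
\]
converges. To establish ((P))(ii) one must show that $A^{-1}$ has strictly positive entries. This uses the irreducibility of $B$ in an essential way: any two vertices $i,j$ of a strongly connected digraph on $N$ vertices are joined by a walk of length at most $N-1$, so the $(i,j)$ entry of $\sum_{k=0}^{N-1}B^k$ is strictly positive (the diagonal being already taken care of by the $I$ term), and hence so is the $(i,j)$ entry of $A^{-1}$. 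Once this is in hand, $Ax \geq 0$ implies $x = A^{-1}(Ax) \geq 0$; if moreover $Ax \neq 0$ strict positivity of $A^{-1}$ upgrades this to $x > 0$, while $Ax = 0$ forces $x = 0$ by the nonsingularity already established. Apart from this strict-positivity argument for $A^{-1}$, the proof is just Perron--Frobenius together with bookkeeping.
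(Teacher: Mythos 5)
Your argument is correct, and it is worth noting that the paper itself offers no proof of this proposition: it is quoted from Vinberg \cite[Theorem 3]{V}, so there is no in-paper argument to compare against. Your reduction to Perron--Frobenius via $B=2I-A$ is the classical route (and close in spirit to Vinberg's original, which also rests on Frobenius's theory of nonnegative matrices): conditions $(ii)$ and $(iii)$ in the definition of a Cartan matrix make $B$ nonnegative with a symmetric support graph, irreducibility of $A$ makes that graph connected and hence the digraph strongly connected, and the trichotomy on the Perron root $\lambda$ relative to $2$ does exactly what you claim. The pairing against the positive left eigenvector handles $\mathrm{(\!(N)\!)}$ and $\mathrm{(\!(Z)\!)}$, algebraic simplicity of $\lambda$ gives $\operatorname{rank}(A)=N-1$ in the zero case, and the entrywise positivity of $A^{-1}=\tfrac12\sum_k(B/2)^k$ (via walks of length at most $N-1$) correctly delivers $\mathrm{(\!(P)\!)}(ii)$.

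One small point you leave implicit: the exclusivity you invoke is that of the three regimes $\lambda>2$, $\lambda=2$, $\lambda<2$, which only yields that \emph{at least} one of $\mathrm{(\!(P)\!)}$, $\mathrm{(\!(Z)\!)}$, $\mathrm{(\!(N)\!)}$ holds. To get \emph{exactly} one, add the one-line observation that the alternatives are pairwise incompatible: $\mathrm{(\!(P)\!)}(i)$ and $\mathrm{(\!(Z)\!)}(i)$ contradict each other directly, and if $\mathrm{(\!(N)\!)}(i)$ holds with witness $y>0$, then $x:=-y$ satisfies $Ax\geq 0$ with $Ax\neq 0$ and $x<0$, violating both $\mathrm{(\!(P)\!)}(ii)$ and $\mathrm{(\!(Z)\!)}(iii)$. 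With that sentence added, the proof is complete.
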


A Cartan matrix $A$ is of \emph{positive} (resp. \emph{zero}) \emph{type} if every component of $A$ is of positive (resp. zero) type. The \emph{Cartan matrix} of a mirror polytope $P$ is the matrix $A_{P} = (\alpha_s(b_t))_{s,t \in S}$. Note that the Cartan matrix of $P$ is well-defined up to conjugation by a positive diagonal matrix because the reflections $\sigma_s$ of $P$ determine the couples $\{ (\alpha_s,b_s) \}_{s \in S}$ up to $\alpha_s \mapsto \lambda_s \alpha_s$ and $b_s \mapsto \lambda_s^{-1} b_s$ for $\lambda_s >0$. Two Cartan matrices $A$ and $B$ are \emph{equivalent} if $A = D B D^{-1}$ for some positive diagonal matrix $D$. We will make essential use of the following:

\begin{theorem}{\cite[Corollary 1]{V}}\label{thm:vinberg_unique}
Let $A$ be a Cartan matrix of size $N \times N$. If $A$ is irreducible, of negative type and of rank $n + 1$, then there exists a mirror polytope $P$ of $\Sb^{n}$ with $N$ facets (unique up to the action of $\mathrm{SL}^{\pm}_{n+1}\R$)  such that $A_P = A$.
\end{theorem}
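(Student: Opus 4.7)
The plan is to construct $P$ by first realizing the Cartan matrix as $A_{ij} = \alpha_i(b_j)$ via explicit forms and vectors in $\R^{n+1}$, then cutting out the polytope by the half-spaces $\{\alpha_i \leqslant 0\}$, and finally verifying that the reflections $\sigma_i = \mathrm{Id} - \alpha_i \otimes b_i$ satisfy Definition~\ref{def:mirror_poly}. For the realization, since $\mathrm{rk}\,A = n+1$, I would identify $\R^{n+1}$ with the column space $W \subset \R^N$ of $A$, set $b_j \in W$ to be the $j$-th column of $A$, and take $\alpha_i$ to be the restriction to $W$ of the $i$-th coordinate form on $\R^N$. By construction $\alpha_i(b_j) = A_{ij}$, so in particular $\alpha_i(b_i) = 2$; moreover, the $\alpha_i$ span $W^\ast \cong (\R^{n+1})^\ast$ and the $b_j$ span $W \cong \R^{n+1}$.

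Set $P := \bigcap_{i=1}^N \Sb(\{v \in \R^{n+1} : \alpha_i(v) \leqslant 0\})$. Property $\mathrm{(\!(N)\!)}(i)$ furnishes $y \in \R^N$ with $y > 0$ and $Ay < 0$; letting $v_0 := \sum_j y_j b_j$ one computes $\alpha_i(v_0) = (Ay)_i < 0$ for every $i$, so the interior of $P$ is non-empty. Proper convexity is then a rank argument: if $v$ and $-v$ both represented points of $\overline{P}$ for some $v \neq 0$, every $\alpha_i$ would vanish on $v$, which forces $v = 0$ since the $\alpha_i$ span $(\R^{n+1})^\ast$. The delicate step is to rule out redundancy among the defining half-spaces so that $P$ indeed has $N$ facets. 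This is where property $\mathrm{(\!(N)\!)}(ii)$ enters: I would argue by contradiction, showing that if some $\alpha_{i_0}$ could be dropped without changing the polytope, then one could produce a nonzero $x \in \R^N$ with $x \geqslant 0$ and $Ax \geqslant 0$, contradicting $\mathrm{(\!(N)\!)}(ii)$. The vector $x$ would be obtained by a separation argument applied to the tangent cone at a well-chosen point of the would-be hyperplane $\Sb(\ker \alpha_{i_0}) \cap P$.

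Once $P$ is known to be a properly convex polytope with $N$ facets of supporting hyperplanes $\Sb(\ker \alpha_i)$, axiom~(3) of Definition~\ref{def:mirror_poly} is immediate from axioms $(ii)$ and $(iii)$ of the definition of a Cartan matrix: $\alpha_s(b_t) = A_{st}$ and $\alpha_t(b_s) = A_{ts}$ are either both strictly negative or simultaneously zero. Uniqueness up to $\mathrm{SL}^{\pm}_{n+1}\R$ then reduces to the observation that any two realizations $(\alpha_i,b_i)$ and $(\alpha'_i,b'_i)$ of the same Cartan data agree, after the allowed diagonal rescaling $\alpha_s \mapsto \lambda_s \alpha_s$, $b_s \mapsto \lambda_s^{-1} b_s$, on their $(n+1)$-dimensional spans, and are therefore intertwined by a unique element of $\mathrm{SL}^{\pm}_{n+1}\R$. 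The main obstacle I anticipate is the non-redundancy step: proving that $P$ has exactly $N$ facets with the correct ridge incidences is the genuinely geometric content of the statement, and it is there that the classification of negative-type Cartan matrices truly does its work.
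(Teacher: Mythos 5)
First, a point of reference: the paper does not actually prove this statement. It is imported from Vinberg \cite[Corollary 1]{V} (via Propositions 13 and 15 there), with only a remark that the argument extends from Coxeter polytopes to arbitrary mirror polytopes. So the comparison is really with Vinberg's proof, and your outline is essentially a faithful reconstruction of it: realising $A$ by pairs $(\alpha_i,b_i)$ with $b_i$ the columns of $A$ and $\alpha_i$ the restricted coordinate forms; producing an interior point from $\mathrm{(\!(N)\!)}(i)$ via $v_0=Ay$; deducing proper convexity from the fact that the $\alpha_i$ span the dual (which, as you implicitly use, is automatic from $\mathrm{rank}\,A=n+1$, since $A$ factors through the two spanning maps); checking axiom (3) of Definition \ref{def:mirror_poly} from the Cartan sign conditions; and getting uniqueness from the rank condition. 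All of that is correct.

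The one genuine gap is the step you yourself flag: non-redundancy of the $N$ half-spaces. You leave it as a sketch, and the mechanism you propose --- extracting a nonzero $x\geq 0$ with $Ax\geq 0$ from a separation argument, to contradict $\mathrm{(\!(N)\!)}(ii)$ --- does not obviously work. If the half-space of $\alpha_{i_0}$ were redundant, Farkas' lemma gives $\alpha_{i_0}=\sum_{i\neq i_0}c_i\alpha_i$ with all $c_i\geq 0$; packaging this as a vector of $\R^N$ annihilated by $A^{T}$ produces a vector whose $i_0$-th entry has the wrong sign, so it is not the nonnegative $x$ you need, and it is unclear how the tangent-cone argument would repair this. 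The step is nonetheless easy to close, and more cheaply than you anticipate: evaluate the Farkas relation at $b_{i_0}$ to get $2=\alpha_{i_0}(b_{i_0})=\sum_{i\neq i_0}c_iA_{i\,i_0}\leq 0$, since $c_i\geq 0$ and $A_{i\,i_0}\leq 0$ for $i\neq i_0$ --- a contradiction that uses only the sign axioms of a Cartan matrix and no appeal to negative type. With that substitution your argument is complete; note also that the theorem claims only that $P$ has $N$ facets, so the ``correct ridge incidences'' you worry about are not needed here --- they are the content of Theorem \ref{thm:vinberg}, not of this one.
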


\begin{remark}
Theorem \ref{thm:vinberg_unique} is not explicitly stated in \cite[Corollary 1]{V} for non-Coxeter polytopes, but it follows from Propositions 13 and 15 of \cite{V} that the consequent Corollary 1 of \cite{V} is valid not only for Coxeter polytopes, but also for mirror polytopes.
\end{remark}

To understand the combinatorics\footnote{The \emph{combinatorics} (or \emph{face poset}) of a polytope is the poset of its faces partially ordered by inclusion.} of a mirror polytope $P$ with facets $\{s\}_{s \in S}$, we introduce the poset $\sigma(P) \subset 2^{S}$ partially ordered by inclusion, which is dual\footnote{Two posets $\mathcal{P}_1$ and $\mathcal{P}_2$ are \emph{dual} to each other provided there exists an order-reversing isomorphism between $\mathcal{P}_1$ and $\mathcal{P}_2$.} to the face poset of $P$:
$$
\sigma(P) : = \{ T \subset S \mid  T = \sigma(f) \textrm{ for some face }  f \textrm{ of } P \},
$$
where $\sigma(f) := \{ s \in S \mid f \subset s \}$. For any subset $T \subset S$, we denote by $A_T$ the restriction of the Cartan matrix $A_P$ of $P$ to $T \times T$.

\begin{theorem}{\cite[Theorems 4 and 7]{V}}\label{thm:vinberg}
Let $P$ be a mirror $n$-polytope with facets $\{s\}_{s \in S}$ and with irreducible Cartan matrix $A_P$ of negative type. Let $T$ be a proper subset of $S$ (i.e. $T \neq \varnothing, S$). Then: 
\begin{enumerate}
\item\label{thm:vinberg_faces} If $A_T$ is of positive type and $\sharp T = k$, then $T \in \sigma(P)$ and its corresponding face $\cap_{s \in T} s $ is of dimension $n-k$.
\item\label{thm:vinberg_faces_zero} If $A_T$ is of zero type and of rank $n-1$, then $T \in \sigma(P)$ and the face $\cap_{s \in T} s $ is of dimension $0$, i.e. a vertex of $P$.
\end{enumerate}
\end{theorem}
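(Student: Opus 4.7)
The plan is to lift everything to $\R^{n+1}$, where the cone
$$\widehat P := \{v \in \R^{n+1} \mid \alpha_s(v) \leq 0 \text{ for all } s \in S\}$$
projectivises to $\overline P$. For $T \subseteq S$ I will write $L_T := \bigcap_{s \in T}\ker\alpha_s$, $V_T := \mathrm{span}\{b_s\}_{s\in T}$, and $W_T < \mathrm{SL}^\pm_{n+1}\R$ for the subgroup generated by $\{\sigma_s\}_{s\in T}$. Every face of $\overline{P}$ is of the form $\Sb(L_T)\cap\overline P$ for some $T \subseteq S$, so it suffices to exhibit, under each of the two hypotheses, a vector $v\in L_T\cap\overline{\widehat P}$ with $\alpha_s(v)<0$ precisely for $s\in S\smallsetminus T$, and check that the face so obtained has the claimed dimension.

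For part (\ref{thm:vinberg_faces}), assume $A_T$ is of positive type with $\sharp T = k$. By $\mathrm{(\!(P)\!)}(i)$ the matrix $A_T$ is nonsingular, so $\{\alpha_s\}_{s\in T}$ are linearly independent and $\dim L_T = n+1-k$. The crucial first step is to prove that $W_T$ is \emph{finite}. I would do this by conjugating $A_T$ by a positive diagonal matrix $D$, chosen via the positive eigenvector supplied by $\mathrm{(\!(P)\!)}(ii)$, so that the symmetrisation of $D^{-1}A_T D$ is positive-definite; then $W_T$ preserves a Euclidean inner product on $V_T$, and acting by hyperplane reflections it must be a finite Euclidean reflection group. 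Averaging any interior point of $\widehat P$ over $W_T$ yields a $W_T$-fixed vector $v_0 \in L_T$, and another application of $\mathrm{(\!(P)\!)}(ii)$ (to the coordinates of $v_0$ with respect to $\{b_s\}_{s\in T}$) shows $\alpha_s(v_0) < 0$ for every $s \in S \smallsetminus T$. This produces the predicted face of dimension $n - k$.

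For part (\ref{thm:vinberg_faces_zero}), assume $A_T$ is of zero type of rank $n-1$. Property $\mathrm{(\!(Z)\!)}(ii)$ supplies $y \in \R^T$ with $y > 0$ and $A_T y = 0$. Setting
$$b = \sum_{s \in T} y_s\, b_s,$$
the equality $\alpha_t(b) = (A_T y)_t = 0$ for $t \in T$ shows $b \in L_T$; for $t \in S \smallsetminus T$ the third bullet of Definition~\ref{def:mirror_poly} gives $\alpha_t(b_s) \leq 0$ for $s \in T$, whence $\alpha_t(b) \leq 0$ since $y > 0$, so $b \in \overline{\widehat P}$. Proper convexity of $P$ rules out $\R \cdot b \subset \overline{\widehat P}$, and the rank hypothesis combined with a dimension count then forces $L_T \cap \overline{\widehat P}$ to be the ray spanned by $b$. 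Hence $\Sb(b)$ is a vertex of $\overline P$ lying in $\bigcap_{s \in T} s$, so $T \in \sigma(P)$.

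The main obstacle is the finiteness of $W_T$ in the positive-type case: promoting the combinatorial sign-conditions $\mathrm{(\!(P)\!)}(i)$--$(ii)$ to a genuine positive-definite symmetric companion of $A_T$, and then extracting finiteness of the associated linear reflection group from this positive-definiteness, is the technical heart of Vinberg's analysis \cite{V}. The dimension-counting step in the zero-type case is comparatively routine once one has the vector $b$ in hand, but it does rely on a careful bookkeeping of the ranks of $A_T$ versus of $\{\alpha_s\}_{s\in T}$ in order to deduce that the resulting face is exactly a vertex rather than a higher-dimensional stratum.
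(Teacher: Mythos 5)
A preliminary remark: the paper gives no proof of this statement --- it is imported from Vinberg \cite{V} (see the remark immediately following the theorem), so the only thing to compare your proposal with is Vinberg's own argument. Your general strategy (exhibit a vector of $L_T\cap\overline{\widehat P}$ on which the $\alpha_s$, $s\notin T$, are strictly negative, by solving a linear system governed by $A_T$) is the right one and is essentially his. However, your route through part (\ref{thm:vinberg_faces}) contains a false step. A Cartan matrix of positive type need not be symmetrizable: symmetrizability requires opposite cyclic products to agree, and e.g.\ the irreducible $3\times3$ Cartan matrix with $A_{12}=A_{23}=A_{31}=-1$ and $A_{21}=A_{32}=A_{13}=-c$, $0<c<1$, is of positive type but not symmetrizable. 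For such a $T$ the group generated by $\{\sigma_s\}_{s\in T}$ is necessarily \emph{infinite}: a finite linear group would preserve a Euclidean inner product, which forces $\langle b_s,\cdot\rangle=\lambda_s\alpha_s$ with $\lambda_s>0$ and hence $\lambda_sA_{st}=\lambda_tA_{ts}$, i.e.\ symmetrizability. So the averaging step is unavailable in general. It is also unnecessary: since $A_T$ is nonsingular, solve $A_Tc=(-\alpha_s(u))_{s\in T}>0$ directly for an interior point $u$ of $\widehat P$; then $\mathrm{(\!(P)\!)}(ii)$ gives $c>0$, and $v_0=u+\sum_{t\in T}c_tb_t$ satisfies $\alpha_s(v_0)=0$ for $s\in T$ and $\alpha_s(v_0)\leq\alpha_s(u)<0$ for $s\notin T$. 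This repairs part (\ref{thm:vinberg_faces}).

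Part (\ref{thm:vinberg_faces_zero}) has three unaddressed gaps, and it is telling that you never invoke the standing hypothesis that $A_P$ is irreducible. (i) You must check $b\neq0$: if $\sum_{s\in T}y_sb_s=0$, then evaluating $\alpha_t$ for $t\notin T$ gives $\sum_{s}y_sA_{ts}=0$ with $y_s>0$ and $A_{ts}\leq0$, hence $A_{ts}=0$ for all $t\notin T$ and $s\in T$, contradicting irreducibility of $A_P$. (ii) To conclude $T\in\sigma(P)$ you need $\alpha_t(b)<0$ \emph{strictly} for every $t\in S\smallsetminus T$; your argument only yields $\leq0$, and if equality held for some $t$ then $\Sb(b)$ would lie on the facet $t$ as well, so only a proper superset of $T$ could arise as $\sigma(f)$ for a face $f$. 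Unlike in part (\ref{thm:vinberg_faces}), there is no interior point $u$ contributing a strictly negative term here, so this needs a genuine argument. (iii) The ``dimension count'' is not routine: $\mathrm{rank}\,A_T=n-1$ only gives $\mathrm{rank}\{\alpha_s\}_{s\in T}\geq n-1$, hence $\dim L_T\leq2$, and you still have to exclude that $L_T\cap\overline{\widehat P}$ is a two-dimensional sector, i.e.\ that the face is an edge rather than a vertex. These are precisely the points where the proof in \cite{V} does the real work.
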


\begin{remark}
Theorem \ref{thm:vinberg} is not explicitly stated in \cite[Theorems 4 and 7]{V}, but it is obtained by applying \cite[Theorem 4]{V} as in the proof of \cite[Theorem 7]{V}.
\end{remark}

\begin{remark}\label{rem:ridge}
Theorem \ref{thm:vinberg}.(\ref{thm:vinberg_faces}) tells us that for any mirror polytope $P$ with facets $\{s\}_{s\in S}$ and reflections $\{ \sigma_s = \mathrm{Id} - \alpha_s \otimes b_s \}_{s \in S}$, if $\alpha_s(b_t) \alpha_t(b_s) < 4$, then the intersection $s \cap t$ is a face of codimension $2$, i.e. a ridge of $P$.
\end{remark}

\subsection{Coxeter groups}\label{subsec:Coxeter_groups}

A \emph{Coxeter matrix} $M=(M_{st})_{s,t \in S}$ on a finite set $S$ is a symmetric matrix with the entries $M_{st} \in \{1,2, \dotsc, m, \dotsc,\infty \}$ such that the diagonal entries $M_{ss}=1$ and the others $M_{st} \neq 1$. To any Coxeter matrix $M=(M_{st})_{s,t \in S}$ is associated a \emph{Coxeter group} $W_{S,M}$ given by a presentation $\langle\, S \mid (st)^{M_{st}}=1 \textrm{ for } M_{st} \neq \infty \,\rangle$. We denote the Coxeter group $W_{S,M}$ also simply by $W,W_S$ or $W_M$. The \emph{rank} of $W_{S}$ is the cardinality $\sharp S$ of $S$.

\medskip

The \emph{Coxeter diagram} of $W_{S,M}$ is a labelled graph $\mathcal{G}_W$ such that (\emph{i}) the set of nodes (i.e. vertices) of $\mathcal{G}_W$ is the set $S$, (\emph{ii}) two nodes $s,t \in S$ are connected by an edge $\overline{st}$ of $\mathcal{G}_W$ if and only if $M_{st} \in \{3,\dotsc, m, \dotsc,\infty \}$, and (\emph{iii}) the edge $\overline{st}$ is labelled by $M_{st}$ if and only if $M_{st} > 3$. A Coxeter group $W$ is \emph{irreducible} if the Coxeter diagram $\mathcal{G}_W$ is connected.

\medskip

An irreducible Coxeter group $W$ is \emph{spherical} (resp. \emph{affine}) if it is finite (resp. infinite and virtually abelian). For a Coxeter group $W$ (not necessarily irreducible), each connected component of the Coxeter diagram $\mathcal{G}_W$ corresponds to a Coxeter group, called a \emph{component} of $W$. A Coxeter group $W$ is \emph{spherical} (resp. \emph{affine}) if each component of $W$ is spherical (resp. affine). We sometimes refer to Appendix \ref{classi_diagram} for the list of all the irreducible spherical and irreducible affine Coxeter diagrams.

\medskip

For each $T\subset S$, the subgroup $W'$ of $W$ generated by $T$ is called a \emph{standard subgroup of $W$}. It is well-known that $W'$ identifies with the Coxeter group $W_{T,M_{T}}$, where $M_{T}$ is the restriction of $M$ to $T \times T$. A subset $T \subset S$ is said to be “\emph{something}” if the Coxeter group $W_T$ is “something”. For example, the word “something” can be replaced by “spherical”, “affine” and so on. Two subsets $T, U \subset S$ are \emph{orthogonal} if $M_{tu}=2$ for every $t \in T$ and every $u \in U$. This relationship is denoted by $T \perp U$.

\subsection{Coxeter polytopes}

Recall that a \emph{Coxeter polytope} is a mirror polytope whose dihedral angles are submultiples of $\pi$, i.e. each dihedral angle is $\nicefrac{\pi}{m}$ for some integer $m \geqslant 2$ or $m=\infty$.

Let $P$ be a Coxeter polytope with the set of facets $S$ and the set of reflections $\{ \sigma_s = \mathrm{Id} - \alpha_s \otimes b_s \}_{s\in S}$. The \emph{Coxeter group $W_P$ of $P$} is the Coxeter group $W_{S,M}$ associated to a Coxeter matrix $M = (M_{st})_{s,t \in S}$ satisfying that $M_{st}=m_{st}$ if $\alpha_s(b_t) \alpha_t(b_s) = 4 \cos^2 (\nicefrac{\pi}{m_{st}})$ and $M_{st} = \infty$ if $\alpha_s(b_t) \alpha_t(b_s) \geq 4$. For a proper face $f$ of $P$ (i.e. $f \neq \varnothing$,  $P$), we write $\sigma (f) = \{ s \in S \mid f \subset s \}$ and $W_f := W_{\sigma(f)}$. 

\begin{theorem}[{Tits \cite[Chapter V]{MR0240238} for Tits simplex, and Vinberg \cite{V}}]\label{theo_vinberg}
$\,$\\Let $P$  be a Coxeter  polytope of $\Sb^n$ with Coxeter group $W_P$ and let $\G_P$ be the subgroup of $\mathrm{SL}^{\pm}_{n+1}\R$ generated by the reflections $\{ \sigma_s\}_{s \in S}$. Then:
\begin{enumerate}
\item The homomorphism $\sigma\colon W_P \rightarrow \Gamma_P \subset \mathrm{SL}^{\pm}_{n+1}\R$ defined by $\sigma(s) =
\sigma_s$ is an isomorphism.

\item The $\Gamma_P$-orbit of $P$ is a convex subset $\Cc_P$ of $\Sb^n$, and $\gamma \,\mathrm{Int}(P) \cap \mathrm{Int}(P) = \varnothing $ for all non-trivial $\gamma \in \Gamma_P$.

\item The group $\Gamma_P$ acts properly discontinuously on the interior $\O_P$ of $\Cc_P$.

\item An open proper face $f$ of $P$ lies in $\O_P$ if and only if the Coxeter group $W_f$ is spherical.
\end{enumerate}
\end{theorem}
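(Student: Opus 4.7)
The plan is to follow Vinberg's classical strategy: construct a tiling of $\mathcal{C}_P$ by copies of $P$ indexed by $W_P$ and read off all four statements from this construction. The four items are proved in an interlocking way rather than one at a time.

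The starting point is the verification that $s \mapsto \sigma_s$ defines a homomorphism $\sigma \colon W_P \to \Gamma_P$, which gives the surjectivity half of (1). Each $\sigma_s$ is an order-2 reflection by definition. For $s \neq t$ with $M_{st} = m_{st} < \infty$, the Coxeter-polytope assumption forces $\alpha_s(b_t)\alpha_t(b_s) = 4\cos^2(\pi/m_{st})$, and by the computation in Section \ref{sec:gen_dehn_fill} this exactly means that $\sigma_s\sigma_t$ is a rotation of angle $2\pi/m_{st}$, hence of order $m_{st}$. When $M_{st} = \infty$ there is no relation to check. This is enough for $\sigma$ to be well-defined and surjective; injectivity will fall out of the tiling argument below.

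Next I would prove (2) and the injectivity part of (1) simultaneously, by induction on the length $\ell(\gamma)$ of a reduced word for $\gamma \in W_P$, building the union $\mathcal{C}_P = \bigcup_{\gamma \in \Gamma_P} \sigma(\gamma)(P)$ and showing at each step that $\sigma(\gamma)\mathrm{Int}(P) \cap \mathrm{Int}(P) = \varnothing$ for $\gamma \neq 1$. The inductive step uses the Exchange Condition in $W_P$ to identify the facet of $\sigma(\gamma)(P)$ that is shared with a previously built tile, and the negative-type hypothesis on $A_P$ (via Theorems \ref{thm:vinberg_unique} and \ref{thm:vinberg}) to guarantee that the new tile lies in the half-space opposite to the previous one. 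The local picture around each ridge has to close up correctly into a dihedral group of order $2m_{st}$; this is exactly what $\alpha_s(b_t)\alpha_t(b_s) = 4\cos^2(\pi/m_{st}) < 4$ and Remark \ref{rem:ridge} ensure. Once all tiles sit on consistent sides of all reflecting hyperplanes they miss, convexity of $\mathcal{C}_P$ follows, and distinct group elements yield distinct tiles, so $\sigma$ is injective.

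Statements (3) and (4) are then coupled. For the "if" direction of (4), suppose $f$ is an open proper face with $W_f$ spherical. By Proposition \ref{prop:cartan_type} and the translation between spherical Coxeter groups and positive-type Cartan submatrices, $A_{\sigma(f)}$ is of positive type, and Theorem \ref{thm:vinberg}.(\ref{thm:vinberg_faces}) says $f$ has codimension $\sharp\sigma(f)$ as expected. The finitely many tiles $\{\sigma(w)(P)\}_{w \in W_f}$ then form a star around $f$ that covers a whole $\Sb^n$-neighborhood of $f$, so $f \subset \Omega_P$ and the $\Gamma_P$-action is locally finite there. Conversely, if $W_f$ is non-spherical, infinitely many tiles accumulate along $f$, and $f$ must lie on $\partial\mathcal{C}_P$. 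Proper discontinuity on $\Omega_P$ in (3) is immediate from this local finiteness together with the tile disjointness from Step 2.

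The main obstacle is the inductive tiling argument of the second paragraph: it carries the geometric content of the theorem and is the step where injectivity of $\sigma$, the convexity of $\mathcal{C}_P$, and the closing-up around each ridge all have to be established together. The combinatorial bookkeeping (matching reduced expressions in $W_P$ with gluings of tiles via the Exchange Condition) and the geometric input (using negative type of $A_P$ to keep every newly added tile on the correct side of every old reflecting hyperplane) must be synchronised; the Tits cone realisation of $W_P$ is the natural template to check that Vinberg's construction really recovers a faithful, geometrically correct copy of $W_P$.
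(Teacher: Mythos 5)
The paper does not prove this statement: it is quoted verbatim as a classical result of Tits (for the Coxeter simplex) and Vinberg \cite{V}, so there is no in-paper argument to compare against. Your outline is, in substance, the standard Tits--Vinberg proof: check the dihedral relations to get a well-defined surjection $\sigma$, run the induction on word length (the Exchange Condition plus the rank-two analysis around each ridge) to obtain the tiling, disjointness of interiors, convexity of $\Cc_P$ and injectivity of $\sigma$ all at once, and then deduce proper discontinuity and the face criterion from the fact that the tiles containing a point $x$ in the relative interior of a face $f$ are exactly those indexed by $W_f$, so that $x$ lies in $\mathrm{Int}(\Cc_P)$ precisely when $W_f$ is finite. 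That is the right skeleton, and it is the proof the citation points to.

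One inaccuracy worth flagging: you invoke ``the negative-type hypothesis on $A_P$ (via Theorems \ref{thm:vinberg_unique} and \ref{thm:vinberg})'' to keep each new tile on the correct side of the old reflecting hyperplanes. Theorem \ref{theo_vinberg} makes no irreducibility or negative-type assumption on the Cartan matrix, and the tiling argument does not need one; what actually forces the new tile into the opposite half-space is the sign condition in Definition \ref{def:mirror_poly} (that $\alpha_s(b_t)$ and $\alpha_t(b_s)$ are both $\leq 0$) together with the rank-two computation $\alpha_s(b_t)\alpha_t(b_s)=4\cos^2(\nicefrac{\pi}{m_{st}})$ or $\geq 4$, which controls the orbit of $P$ under the dihedral group attached to each ridge. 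Theorems \ref{thm:vinberg_unique} and \ref{thm:vinberg} are consequences of (refinements of) the present theorem in Vinberg's development, not inputs to it, so leaning on them here would be circular. With that correction your sketch is a faithful summary of the cited proof.
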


Theorem \ref{theo_vinberg} tells us that $\O_P$ is a convex domain of $\Sb^n$, and that if $\Omega_P$ is properly convex then $\O_P/_{\Gamma_P}$ is a convex projective Coxeter orbifold.

\subsection{Relative hyperbolicity}\label{sec:rel_hyp}

Let $Y$  be a proper Gromov-hyperbolic space (see e.g. \cite[Section 2]{hruska} for a quick review and \cite[Part III.3]{bridson_haefliger} for  details on Gromov-hyperbolic spaces). We recall that for every isometry $\g$ of $Y$, exactly one of the following holds:
\begin{enumerate}
	\item 
	$\g$ fixes a point of $Y$.
	\item $\g$ fixes exactly one point of the Gromov boundary $\partial Y$ of $Y$.
	\item $\g$ fixes two points of  $\partial Y$.
\end{enumerate}
We say that $\g$ is \emph{parabolic} (resp. \emph{hyperbolic}) if (2) (resp. (3)) holds. Let $\Gamma$ be a subgroup of isometries of $Y$ that acts properly discontinuously. A subgroup of $\Gamma$ is \emph{parabolic} if it is infinite and contains no hyperbolic element. A parabolic subgroup fixes a unique point of $\partial Y$, called a \emph{parabolic point}. The stabiliser of a parabolic point is a maximal parabolic subgroup.

\medskip

Relative hyperbolicity has many equivalent definitions, see e.g. \cite[Section 3]{hruska}. We recall one of them, named \emph{cusp uniform action}. A group $\Gamma$ is \emph{relatively hyperbolic with respect to a collection $\Pc$ of subgroups} if there exist a proper Gromov-hyperbolic metric space $Y$ and a properly discontinuous effective action of $\Gamma$ on $Y$ by isometry such that:
\begin{itemize}
	\item the collection $\Pc$ is a set of representatives of the conjugacy classes of maximal parabolic subgroups of $\G$,
	
	\item there exists a $\Gamma$-equivariant collection $\Hc$ of disjoint open horoballs\footnote{We refer to \cite[Section 2]{hruska} for the notion of  horoball in Gromov-hyperbolic space.} centred at the parabolic points of $\Gamma$,
	
	\item the action of $\Gamma$ on $Y \smallsetminus U$ is cocompact, where $U$ denotes the union of the horoballs in~$\Hc$.
\end{itemize} 

For example, the fundamental group of a cusped hyperbolic $n$-manifold (or $n$-orbifold) is relatively hyperbolic with respect to its cusp subgroups, which are virtually $\Z^{n-1}$ \cite{bowditch,farb}.

\medskip

For $k\geqslant 2$, any $\Z^k$-subgroup $\Lambda$ of a relatively hyperbolic group $\Gamma$ with respect to a collection $\Pc$ of subgroups must lie in a conjugate of a subgroup $P \in \Pc$. Indeed, the centraliser of a hyperbolic element in a discrete subgroup of isometries of $Y$ is virtually $\Z$. Thus $\Lambda$ must contain a parabolic isometry $\delta$ with a unique fixed point $p\in \partial Y$, and any other element $\gamma \in \Lambda$ also has to fix $p$ since it commutes with $\delta$. So $\Lambda$ lies in the stabiliser of $p$.

\medskip

In particular the fundamental group $\pi_1 X$ of Theorem \ref{thm:main}, which is relatively hyperbolic with respect to the collection $\Pc = \{\pi_1T_i,\,\pi_1T'_i\}_i$ of rank-$2$ abelian subgroups, is not relatively hyperbolic with respect to any proper sub-collection of $\Pc$.

\medskip

We end this section by giving a criterion to determine when a Coxeter group is relatively hyperbolic with respect to a collection of standard subgroups. We will use this criterion in Section \ref{sec:caprace}.

\begin{theorem}[Moussong \cite{moussong} and Caprace \cite{caprace_cox_rel-hyp,caprace_erratum}]\label{moussong_caprace}
Let $W_{S}$ be a Coxeter group, and let $\Tc$ be a collection of subsets of $S$. Then the group $W_S$ is relatively hyperbolic with respect to $\{ W_T \mid T \in \Tc \}$ if and only if the following hold:
\begin{enumerate}
\item\label{thm:Caprace1} For every irreducible affine subset $U \subset S$ of rank $\geqslant 3$, there exists $T \in \Tc$ such that $U \subset T$.
\item\label{thm:Caprace2} For every pair of irreducible non-spherical subsets $S_1, S_2$ of $S$ with $S_1 \perp S_2$, there exists $T \in \Tc$ such that $S_1 \cup S_2 \subset T$.
\item\label{thm:Caprace3} For every pair $T, T' \in \mathcal{T}$ with $T \neq T'$, the intersection $T \cap T'$ is spherical.
\item\label{thm:Caprace4} For every $T \in \Tc$ and every irreducible non-spherical subset $U \subset T$, we have $U^{\perp} \subset T$, where $U^{\perp} := \{ s \in S \mid s \perp U\}$.
\end{enumerate}
\end{theorem}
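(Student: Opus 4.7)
The plan is to prove the two implications separately, beginning with the more straightforward necessity direction. Suppose $W_S$ is relatively hyperbolic with respect to $\{W_T \mid T\in\Tc\}$. The overarching principle, recorded in the paragraph preceding the theorem, is that any $\Z^k$-subgroup with $k\geq 2$ must lie in a conjugate of some peripheral. For \eqref{thm:Caprace1}, an irreducible affine Coxeter group of rank $r\geq 3$ is virtually $\Z^{r-1}$ and thus contains $\Z^2$; combined with Tits's intersection formula $W_A\cap W_B=W_{A\cap B}$ for standard subgroups, this forces $U\subset T$ for some $T\in\Tc$. For \eqref{thm:Caprace2}, any infinite irreducible Coxeter group contains an element of infinite order, so $W_{S_1}\times W_{S_2}=W_{S_1\cup S_2}$ contains $\Z^2$, and the same argument applies. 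For \eqref{thm:Caprace3}, distinct elements of $\Tc$ correspond to distinct maximal parabolic subgroups, whose intersection must be finite, forcing $T\cap T'$ to be spherical. For \eqref{thm:Caprace4}, one picks an infinite-order element $\gamma\in W_U\subset W_T$; every $s\in U^\perp$ commutes with $\gamma$, and $s\gamma$ (whose square is a nontrivial power of $\gamma$) has infinite order and also commutes with $\gamma$, so both $\gamma$ and $s\gamma$ lie in the same peripheral $W_T$, forcing $s\in W_T$ and hence $s\in T$.

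For sufficiency, the natural model is Moussong's Davis complex $\Sigma_W$ equipped with the Moussong piecewise-Euclidean metric, which is CAT(0) and on which $W_S$ acts properly, cocompactly, and by isometries. The strategy is to turn this CAT(0) action into a cusp-uniform action: for every $T\in\Tc$ and every coset $gW_T$, attach a combinatorial horoball (\`a la Groves--Manning) along the convex subcomplex $g\cdot\Sigma_{W_T}$, obtaining an enlarged space $Y$; equivalently, remove an equivariant family of open horoball-like neighbourhoods from $\Sigma_W$. One then verifies that $W_S$ acts cocompactly on the complement and that $Y$ is Gromov-hyperbolic, which will deliver the required relatively hyperbolic structure via the cusp-uniform definition recalled in Section \ref{sec:rel_hyp}.

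The main obstacle, and the technical heart of Caprace's argument, is showing that $Y$ is hyperbolic. Moussong's own theorem, the case $\Tc=\varnothing$, identifies precisely two combinatorial sources of flatness in $\Sigma_W$: irreducible affine subsets of $S$ of rank $\geq 3$ and pairs $S_1\perp S_2$ of irreducible non-spherical subsets; every genuine obstruction to hyperbolicity sits inside a coset of some $W_U$ built from these pieces. Conditions \eqref{thm:Caprace1} and \eqref{thm:Caprace2} say every such $U$ is absorbed by some $T\in\Tc$, so coning off the cosets of the $W_T$ kills every flat. What remains is to check that the coning procedure does not introduce new obstructions: distinct horoballs must not lie at bounded Hausdorff distance, which is controlled by \eqref{thm:Caprace3} since it forces their stabilisers to meet in a finite group; and no infinite subgroup outside a given peripheral may centralise a large subgroup inside it, which is controlled by \eqref{thm:Caprace4} since it prevents the centraliser of an irreducible non-spherical $W_U\subset W_T$ from escaping $W_T$. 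Once these two non-interaction properties are established, hyperbolicity of $Y$ follows from Moussong's local-to-global criterion applied to the coned-off complex, and the cusp-uniform action of $W_S$ on $Y$ is then automatic, completing the proof.
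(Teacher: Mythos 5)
First, a point of comparison: the paper does not prove Theorem \ref{moussong_caprace} at all --- it is imported verbatim from Moussong's thesis and Caprace's papers, so there is no internal proof to measure your attempt against. Judged on its own terms, your outline correctly identifies the two standard ingredients (the ``$\Z^2$ must sit in a peripheral'' principle for necessity, and the Davis--Moussong complex for sufficiency), but both directions have genuine gaps. In the necessity direction, the peripheral-subgroup principle only gives you that $W_U$ (for $U$ irreducible affine of rank $\geqslant 3$, or $U = S_1 \cup S_2$) lies in a \emph{conjugate} $gW_Tg^{-1}$ of some peripheral subgroup, whereas conditions \eqref{thm:Caprace1} and \eqref{thm:Caprace2} assert the literal containment $U \subset T$. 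The intersection formula $W_A \cap W_B = W_{A\cap B}$ you invoke applies to two \emph{standard} subgroups and says nothing about $W_U \cap gW_Tg^{-1}$; passing from ``$W_U$ is conjugate into $W_T$'' to ``$U \subset T$'' requires the theory of parabolic closures (a parabolic subgroup contained in $W_T$ is a parabolic subgroup of $(W_T,T)$, plus a rigidity statement for irreducible non-spherical standard parabolics), none of which appears in your argument. Your treatments of \eqref{thm:Caprace3} and \eqref{thm:Caprace4} are essentially sound.

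In the sufficiency direction, what you have written is a plan rather than a proof: the assertions that the horoballed space $Y$ is Gromov-hyperbolic and that the action is cusp-uniform are exactly the content of the theorem, and you defer them to ``Moussong's local-to-global criterion applied to the coned-off complex,'' which is not something Moussong's argument provides --- his criterion concerns piecewise Euclidean CAT(0) complexes, and a complex with combinatorial horoballs or cone points attached is no longer of that type. Caprace's actual route is different: he shows that conditions \eqref{thm:Caprace1}--\eqref{thm:Caprace4} are equivalent to the flats of the Davis complex being \emph{isolated} and then invokes the Hruska--Kleiner characterisation of relative hyperbolicity for CAT(0) groups with isolated flats. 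Without either that input or a genuine verification of hyperbolicity of your space $Y$, the sufficiency direction is not established.
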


\subsection{Operation on a simplex} \label{sec:truncated_simplex}

We introduce here three uniform\footnote{A polytope $P$ of dimension $n \geq 3$ (in the Euclidean space) is \emph{uniform} if it is a vertex-transitive polytope with uniform facets. A \emph{uniform polygon} is a regular polygon. By \emph{vertex-transitive}, we mean that the symmetry group of $P$ acts transitively on the set of vertices of $P$.} Euclidean 4-polytopes via truncation, rectification and bitruncation of the 4-simplex.

\medskip

Roughly speaking, by truncation, rectification and bitruncation of a regular polytope $P \subset \R^n$ we mean cutting uniformly $P$ at \emph{every} vertex with a hyperplane orthogonal to the line joining the vertex to the barycentre. This operation is nicely described in the classical book of Coxeter \cite[Section 8.1]{Cox}. Combinatorially, by collapsing some ridges of the bitruncated $P$ to vertices, one gets the rectified $P$. For example, a truncated (resp. rectified, resp. bitruncated) 3-simplex is a truncated tetrahedron (resp. an octahedron, resp. a truncated tetrahedron) in Figure \ref{fig:truncated_3_simplices}.

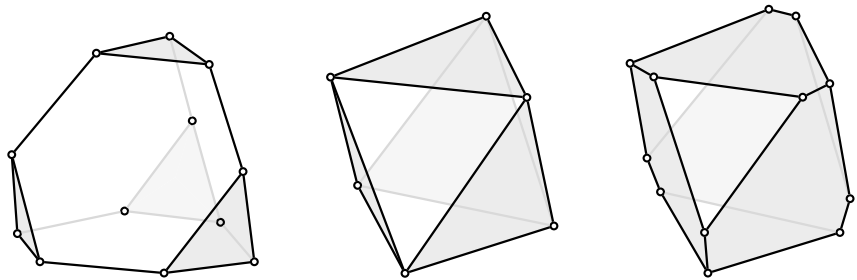
\begin{figure}[h!]
\begin{center}
\begin{tikzpicture}[thick,scale=0.75, every node/.style={transform shape}]
\fill[gray!7] (2.4,1.5) -- (1.5,0.9) -- (3.2,0.7) -- cycle;
\fill[gray!7] (2.4,1.5) -- (1.5,0.9) -- (2.7,2.5) -- cycle;
\fill[gray!7] (2.4,1.5) -- (3.2,0.7) -- (2.7,2.5) -- cycle;

\fill[gray!15] (0,0) -- (-0.5,1.9) -- (-0.4,0.5) -- cycle;
\fill[gray!15] (3.0,3.5) -- (2.3,4.0) -- (1.0,3.7) -- cycle;
\fill[gray!15] (2.2,-0.2) -- (3.8,0.0) -- (3.6,1.6) -- cycle;

\node[draw, fill=white, circle, inner sep=1pt, minimum size=1.2mm] (18) at (1.5,0.9)  {};
\node[draw, fill=gray!20, circle, inner sep=1pt, minimum size=1.2mm] (19) at (3.2,0.7)  {};
\node[draw, fill=white, circle, inner sep=1pt, minimum size=1.2mm] (20) at (2.7,2.5)  {};

\draw[line width=0.3mm, color=gray!30] (19) -- (20);
\draw[line width=0.3mm, color=gray!30] (18) -- (19);
\draw[line width=0.3mm, color=gray!30] (18) -- (20);

\node[draw, fill=white, circle, inner sep=1pt, minimum size=1.2mm] (1) at (0.0,0.0)  {};
\node[draw, fill=white, circle, inner sep=1pt, minimum size=1.2mm] (2) at (2.2,-0.2) {};
\node[draw, fill=white, circle, inner sep=1pt, minimum size=1.2mm] (3) at (3.8,0.0)  {};
\node[draw, fill=white, circle, inner sep=1pt, minimum size=1.2mm] (4) at (3.6,1.6)  {};
\node[draw, fill=white, circle, inner sep=1pt, minimum size=1.2mm] (5) at (3.0,3.5)  {};
\node[draw, fill=white, circle, inner sep=1pt, minimum size=1.2mm] (6) at (2.3,4.0)  {};
\node[draw, fill=white, circle, inner sep=1pt, minimum size=1.2mm] (7) at (1.0,3.7)  {};
\node[draw, fill=white, circle, inner sep=1pt, minimum size=1.2mm] (8) at (-0.5,1.9)  {};
\node[draw, fill=white, circle, inner sep=1pt, minimum size=1.2mm] (9) at (-0.4,0.5)  {};

\draw[line width=0.3mm, color=gray!30] (9) -- (18);
\draw[line width=0.3mm, color=gray!30] (3) -- (19);

\draw[line width=0.3mm, color=gray!30] (6) -- (20);

\draw[line width=0.3mm] (1) -- (2);
\draw[line width=0.3mm] (1) -- (8);
\draw[line width=0.3mm] (1) -- (9);
\draw[line width=0.3mm] (2) -- (3);
\draw[line width=0.3mm] (3) -- (4);
\draw[line width=0.3mm] (2) -- (4);
\draw[line width=0.3mm] (4) -- (5);
\draw[line width=0.3mm] (5) -- (6);
\draw[line width=0.3mm] (5) -- (7);
\draw[line width=0.3mm] (6) -- (7);
\draw[line width=0.3mm] (7) -- (8);
\draw[line width=0.3mm] (8) -- (9);
\end{tikzpicture}
\quad\quad
\begin{tikzpicture}[thick,scale=0.9, every node/.style={transform shape}]
\fill[gray!7] (0.6,1.2) -- (3.5,0.6) -- (2.5,3.7) -- cycle;
\fill[gray!15] (0.2,2.8) -- (2.5,3.7) -- (3.1,2.5) --  cycle;
\fill[gray!15] (0.2,2.8) -- (0.6,1.2) -- (1.3,-0.1) -- cycle;
\fill[gray!15] (3.5,0.6) -- (1.3,-0.1) -- (3.1,2.5) -- cycle;

\node[draw, fill=white, circle, inner sep=1pt, minimum size=1.0mm] (18) at (0.6,1.2)  {};
\node[draw, fill=white, circle, inner sep=1pt, minimum size=1.0mm] (19) at (3.5,0.6)  {};
\node[draw, fill=white, circle, inner sep=1pt, minimum size=1.0mm] (20) at (2.5,3.7)  {};

\draw[line width=0.3mm, color=gray!30] (19) -- (20);
\draw[line width=0.3mm, color=gray!30] (18) -- (19);
\draw[line width=0.3mm, color=gray!30] (18) -- (20);

\node[draw, fill=white, circle, inner sep=1pt, minimum size=1.0mm] (1) at (1.3,-0.1)  {};
\node[draw, fill=white, circle, inner sep=1pt, minimum size=1.0mm] (4) at (3.1,2.5)  {};
\node[draw, fill=white, circle, inner sep=1pt, minimum size=1.0mm] (7) at (0.2,2.8)  {};

\draw[line width=0.3mm] (1) -- (7);
\draw[line width=0.3mm] (1) -- (18);
\draw[line width=0.3mm] (1) -- (19);
\draw[line width=0.3mm] (19) -- (4);
\draw[line width=0.3mm] (1) -- (4);
\draw[line width=0.3mm] (4) -- (20);
\draw[line width=0.3mm] (4) -- (7);
\draw[line width=0.3mm] (20) -- (7);
\draw[line width=0.3mm] (7) -- (18);
\end{tikzpicture}
\quad\quad
\begin{tikzpicture}[thick,scale=0.9, every node/.style={transform shape}]
\fill[gray!7] (0.5,1.5) -- (0.7,1.0) -- (3.35,0.4) -- (3.5,0.9) -- (2.7,3.6) -- (2.3,3.7) -- cycle;
\fill[gray!15] (2.3,3.7) -- (2.7,3.6) -- (3.2,2.6) -- (2.8,2.4) -- (0.6,2.7) -- (0.25,2.9) -- cycle;
\fill[gray!15] (0.25,2.9) -- (0.5,1.5) -- (0.7,1.0) -- (1.4,-0.2) -- (1.35,0.4) -- (0.6,2.7) -- (0.25,2.9) -- cycle;
\fill[gray!15] (3.5,0.9) -- (3.35,0.4) -- (1.4,-0.2) -- (1.35,0.4) -- (2.8,2.4) -- (3.2,2.6) -- cycle;

\node[draw, fill=white, circle, inner sep=1pt, minimum size=1.0mm] (18_2) at (0.7,1.0)  {};
\node[draw, fill=white, circle, inner sep=1pt, minimum size=1.0mm] (18_3) at (0.5,1.5)  {};
\node[draw, fill=white, circle, inner sep=1pt, minimum size=1.0mm] (19_1) at (3.35,0.4)  {};
\node[draw, fill=white, circle, inner sep=1pt, minimum size=1.0mm] (19_3) at (3.5,0.9)  {};
\node[draw, fill=white, circle, inner sep=1pt, minimum size=1.0mm] (20_2) at (2.3,3.7)  {};
\node[draw, fill=white, circle, inner sep=1pt, minimum size=1.0mm] (20_3) at (2.7,3.6)  {};

\draw[line width=0.3mm] (20_2) -- (20_3);

\draw[line width=0.3mm, color=gray!30] (19_3) -- (20_3);
\draw[line width=0.3mm, color=gray!30] (18_2) -- (19_1);
\draw[line width=0.3mm, color=gray!30] (18_3) -- (20_2);

\draw[line width=0.3mm] (18_2) -- (18_3);

\draw[line width=0.3mm] (19_1) -- (19_3);

\node[draw, fill=white, circle, inner sep=1pt, minimum size=1.0mm] (1_1) at (1.4,-0.2)  {};
\node[draw, fill=white, circle, inner sep=1pt, minimum size=1.0mm] (1_3) at (1.35,0.4)  {};
\node[draw, fill=white, circle, inner sep=1pt, minimum size=1.0mm] (4_1) at (2.8,2.4)  {};
\node[draw, fill=white, circle, inner sep=1pt, minimum size=1.0mm] (4_2) at (3.2,2.6)  {};
\node[draw, fill=white, circle, inner sep=1pt, minimum size=1.0mm] (7_1) at (0.25,2.9)  {};
\node[draw, fill=white, circle, inner sep=1pt, minimum size=1.0mm] (7_3) at (0.6,2.7)  {};

\draw[line width=0.3mm] (4_1) -- (4_2);

\draw[line width=0.3mm] (1_1) -- (1_3);

\draw[line width=0.3mm] (7_1) -- (7_3);

\draw[line width=0.3mm] (1_3) -- (7_3);
\draw[line width=0.3mm] (1_1) -- (18_2);
\draw[line width=0.3mm] (1_1) -- (19_1);
\draw[line width=0.3mm] (19_3) -- (4_2);
\draw[line width=0.3mm] (1_3) -- (4_1);
\draw[line width=0.3mm] (4_2) -- (20_3);
\draw[line width=0.3mm] (4_1) -- (7_3);
\draw[line width=0.3mm] (20_2) -- (7_1);
\draw[line width=0.3mm] (7_1) -- (18_3);
\end{tikzpicture}
\end{center}
\caption{\footnotesize The truncated (left), rectified (middle), and bitruncated (right) 3-simplex.}
\label{fig:truncated_3_simplices}
\end{figure}

\medskip

We now explain in detail this operation for the 4-simplex. Consider a regular 4-simplex $\Delta\subset\R^4$ with barycentre the origin and vertices $v_1,\ldots,v_5$. We denote by $F_i$ the facet of $\Delta$ opposite to $v_i$ and by $H_i$ the closed half-space containing the origin with $F_i\subset\partial H_i$. Then $\Delta=H_1\cap\dotsc\cap H_5$.

\medskip

Let $c :=|v_i|$ and fix a positive parameter $s\leq c$. We denote by $H'_i$ the closed half-space (depending on $s$) containing the origin such that $\partial H'_i$ is orthogonal to $ v_i$ and $\frac{s}{c}\, v_i \in\partial H'_i$, and we set
$$Q_s= H_1\cap\ldots\cap H_5\cap H'_1\cap\ldots\cap H'_5.$$
Note that $Q_{c}=\Delta$ is the original simplex. There exist some numbers $0 < a < b < c$ such that the combinatorics of the 4-polytope $Q_s$ is constant for $s$ in $(a,b)$ and $(b,c)$, and changes at $s=a$, $b$ and $c$. The polytope $Q_s$ (depicted in Figure \ref{fig:truncated_simplices}) is called:  
\begin{itemize}
\item a \emph{truncated 4-simplex} for $s\in(b,c)$,
\item a \emph{rectified 4-simplex} for $s=b$,
\item a \emph{bitruncated 4-simplex} for $s\in(a,b)$.
\end{itemize}

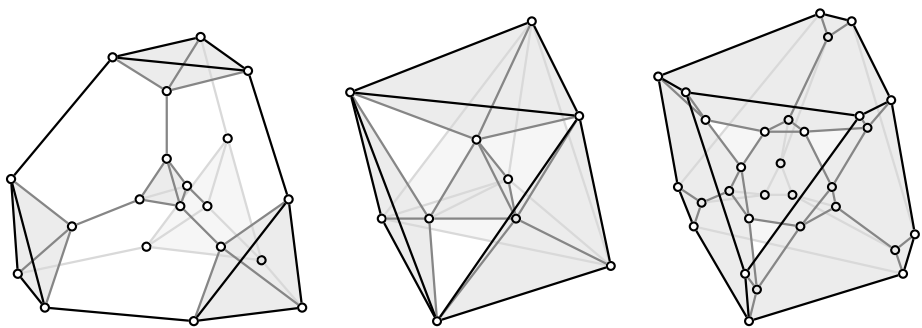
\begin{figure}[h!]
\begin{center}
\begin{tikzpicture}[thick,scale=0.9, every node/.style={transform shape}]
\fill[gray!7] (2.4,1.5) -- (1.5,0.9) -- (3.2,0.7) -- cycle;
\fill[gray!7] (2.4,1.5) -- (1.5,0.9) -- (2.7,2.5) -- cycle;
\fill[gray!7] (2.4,1.5) -- (3.2,0.7) -- (2.7,2.5) -- cycle;

\fill[gray!15] (0,0) -- (-0.5,1.9) -- (-0.4,0.5) -- cycle;
\fill[gray!15] (0,0) -- (-0.5,1.9) -- (0.4,1.2) -- cycle;
\fill[gray!15] (3.0,3.5) -- (2.3,4.0) -- (1.0,3.7) -- cycle;
\fill[gray!15] (3.0,3.5) -- (1.8,3.2) -- (1.0,3.7) -- cycle;
\fill[gray!15] (2.2,-0.2) -- (3.8,0.0) -- (3.6,1.6) -- cycle;
\fill[gray!15] (2.2,-0.2) -- (2.6,0.9) -- (3.6,1.6) -- cycle;

\fill[gray!15] (1.4,1.6) -- (2.0,1.5) -- (1.8,2.2) -- cycle;
\fill[gray!15] (2.1,1.8) -- (2.0,1.5) -- (1.8,2.2) -- cycle;

\node[draw, fill=white, circle, inner sep=1pt, minimum size=1.2mm] (17) at (2.4,1.5)  {};
\node[draw, fill=white, circle, inner sep=1pt, minimum size=1.2mm] (18) at (1.5,0.9)  {};
\node[draw, fill=gray!20, circle, inner sep=1pt, minimum size=1.2mm] (19) at (3.2,0.7)  {};
\node[draw, fill=white, circle, inner sep=1pt, minimum size=1.2mm] (20) at (2.7,2.5)  {};

\draw[line width=0.3mm, color=gray!30] (19) -- (20);
\draw[line width=0.3mm, color=gray!30] (18) -- (19);
\draw[line width=0.3mm, color=gray!30] (18) -- (20);
\draw[line width=0.3mm, color=gray!30] (17) -- (20);
\draw[line width=0.3mm, color=gray!30] (17) -- (19);
\draw[line width=0.3mm, color=gray!30] (17) -- (18);

\node[draw, fill=white, circle, inner sep=1pt, minimum size=1.2mm] (11) at (1.4,1.6)  {};
\node[draw, fill=white, circle, inner sep=1pt, minimum size=1.2mm] (12) at (2.0,1.5)  {};
\node[draw, fill=white, circle, inner sep=1pt, minimum size=1.2mm] (13) at (1.8,2.2)  {};
\node[draw, fill=white, circle, inner sep=1pt, minimum size=1.2mm] (14) at (2.1,1.8)  {};
\node[draw, fill=white, circle, inner sep=1pt, minimum size=1.2mm] (15) at (2.6,0.9)  {};
\node[draw, fill=white, circle, inner sep=1pt, minimum size=1.2mm] (16) at (1.8,3.2)  {};

\draw[line width=0.3mm, color=gray!95] (11) -- (12);
\draw[line width=0.3mm, color=gray!95] (11) -- (13);
\draw[line width=0.3mm, color=gray!30] (11) -- (14);
\draw[line width=0.3mm, color=gray!95] (12) -- (13);
\draw[line width=0.3mm, color=gray!95] (13) -- (14);
\draw[line width=0.3mm, color=gray!95] (12) -- (14);
\draw[line width=0.3mm, color=gray!95] (12) -- (15);

\node[draw, fill=white, circle, inner sep=1pt, minimum size=1.2mm] (1) at (0.0,0.0)  {};
\node[draw, fill=white, circle, inner sep=1pt, minimum size=1.2mm] (2) at (2.2,-0.2) {};
\node[draw, fill=white, circle, inner sep=1pt, minimum size=1.2mm] (3) at (3.8,0.0)  {};
\node[draw, fill=white, circle, inner sep=1pt, minimum size=1.2mm] (4) at (3.6,1.6)  {};
\node[draw, fill=white, circle, inner sep=1pt, minimum size=1.2mm] (5) at (3.0,3.5)  {};
\node[draw, fill=white, circle, inner sep=1pt, minimum size=1.2mm] (6) at (2.3,4.0)  {};
\node[draw, fill=white, circle, inner sep=1pt, minimum size=1.2mm] (7) at (1.0,3.7)  {};
\node[draw, fill=white, circle, inner sep=1pt, minimum size=1.2mm] (8) at (-0.5,1.9)  {};
\node[draw, fill=white, circle, inner sep=1pt, minimum size=1.2mm] (9) at (-0.4,0.5)  {};
\node[draw, fill=white, circle, inner sep=1pt, minimum size=1.2mm] (10) at (0.4,1.2)  {};

\draw[line width=0.3mm, color=gray!30] (9) -- (18);
\draw[line width=0.3mm, color=gray!30] (3) -- (19);
\draw[line width=0.3mm, color=gray!95] (3) -- (15);

\draw[line width=0.3mm, color=gray!30] (6) -- (20);

\draw[line width=0.3mm, color=gray!95] (1) -- (10);
\draw[line width=0.3mm, color=gray!95] (8) -- (10);
\draw[line width=0.3mm, color=gray!95] (9) -- (10);

\draw[line width=0.3mm, color=gray!95] (2) -- (15);
\draw[line width=0.3mm, color=gray!95] (4) -- (15);
\draw[line width=0.3mm, color=gray!95] (10) -- (11);
\draw[line width=0.3mm, color=gray!95] (13) -- (16);

\draw[line width=0.3mm, color=gray!95] (14) -- (17);

\draw[line width=0.3mm, color=gray!95] (6) -- (16);
\draw[line width=0.3mm, color=gray!95] (7) -- (16);
\draw[line width=0.3mm, color=gray!95] (5) -- (16);
\draw[line width=0.3mm] (1) -- (2);
\draw[line width=0.3mm] (1) -- (8);
\draw[line width=0.3mm] (1) -- (9);
\draw[line width=0.3mm] (2) -- (3);
\draw[line width=0.3mm] (3) -- (4);
\draw[line width=0.3mm] (2) -- (4);
\draw[line width=0.3mm] (4) -- (5);
\draw[line width=0.3mm] (5) -- (6);
\draw[line width=0.3mm] (5) -- (7);
\draw[line width=0.3mm] (6) -- (7);
\draw[line width=0.3mm] (7) -- (8);
\draw[line width=0.3mm] (8) -- (9);
\end{tikzpicture}
\quad
\begin{tikzpicture}[thick,scale=1.05, every node/.style={transform shape}]
\fill[gray!7] (0.6,1.2) -- (3.5,0.6) -- (2.5,3.7) -- cycle;
\fill[gray!15] (0.2,2.8) -- (2.5,3.7) -- (3.1,2.5) -- (1.8,2.2) -- cycle;
\fill[gray!15] (0.2,2.8) -- (0.6,1.2) -- (1.3,-0.1) -- (1.2,1.2) -- cycle;
\fill[gray!15] (2.2,1.7) -- (2.3,1.2) -- (1.2,1.2) -- (1.8,2.2) -- cycle;
\fill[gray!15] (3.5,0.6) -- (1.3,-0.1) -- (3.1,2.5) -- cycle;

\node[draw, fill=white, circle, inner sep=1pt, minimum size=1.0mm] (17) at (2.2,1.7)  {};
\node[draw, fill=white, circle, inner sep=1pt, minimum size=1.0mm] (18) at (0.6,1.2)  {};
\node[draw, fill=white, circle, inner sep=1pt, minimum size=1.0mm] (19) at (3.5,0.6)  {};
\node[draw, fill=white, circle, inner sep=1pt, minimum size=1.0mm] (20) at (2.5,3.7)  {};

\draw[line width=0.3mm, color=gray!30] (19) -- (20);
\draw[line width=0.3mm, color=gray!30] (18) -- (19);
\draw[line width=0.3mm, color=gray!30] (18) -- (20);
\draw[line width=0.3mm, color=gray!30] (17) -- (20);
\draw[line width=0.3mm, color=gray!30] (17) -- (19);
\draw[line width=0.3mm, color=gray!30] (17) -- (18);

\node[draw, fill=white, circle, inner sep=1pt, minimum size=1.0mm] (10) at (1.2,1.2)  {};
\node[draw, fill=gray!20, circle, inner sep=1pt, minimum size=1.0mm] (12) at (2.3,1.2)  {};
\node[draw, fill=white, circle, inner sep=1pt, minimum size=1.0mm] (13) at (1.8,2.2)  {};

\draw[line width=0.3mm, color=gray!95] (10) -- (12);
\draw[line width=0.3mm, color=gray!95] (10) -- (13);
\draw[line width=0.3mm, color=gray!30] (10) -- (17);
\draw[line width=0.3mm, color=gray!95] (12) -- (13);
\draw[line width=0.3mm, color=gray!95] (13) -- (17);
\draw[line width=0.3mm, color=gray!95] (12) -- (17);

\node[draw, fill=white, circle, inner sep=1pt, minimum size=1.0mm] (1) at (1.3,-0.1)  {};
\node[draw, fill=white, circle, inner sep=1pt, minimum size=1.0mm] (4) at (3.1,2.5)  {};
\node[draw, fill=white, circle, inner sep=1pt, minimum size=1.0mm] (7) at (0.2,2.8)  {};

\draw[line width=0.3mm, color=gray!95] (19) -- (12);

\draw[line width=0.3mm, color=gray!95] (1) -- (10);
\draw[line width=0.3mm, color=gray!95] (7) -- (10);
\draw[line width=0.3mm, color=gray!95] (18) -- (10);

\draw[line width=0.3mm, color=gray!95] (1) -- (12);
\draw[line width=0.3mm, color=gray!95] (4) -- (12);

\draw[line width=0.3mm, color=gray!95] (20) -- (13);
\draw[line width=0.3mm, color=gray!95] (7) -- (13);
\draw[line width=0.3mm, color=gray!95] (4) -- (13);
\draw[line width=0.3mm] (1) -- (7);
\draw[line width=0.3mm] (1) -- (18);
\draw[line width=0.3mm] (1) -- (19);
\draw[line width=0.3mm] (19) -- (4);
\draw[line width=0.3mm] (1) -- (4);
\draw[line width=0.3mm] (4) -- (20);
\draw[line width=0.3mm] (4) -- (7);
\draw[line width=0.3mm] (20) -- (7);
\draw[line width=0.3mm] (7) -- (18);
\end{tikzpicture}
\quad
\begin{tikzpicture}[thick,scale=1.05, every node/.style={transform shape}]
\fill[gray!7] (0.5,1.5) -- (0.7,1.0) -- (3.35,0.4) -- (3.5,0.9) -- (2.7,3.6) -- (2.3,3.7) -- cycle;
\fill[gray!15] (2.3,3.7) -- (2.7,3.6) -- (3.2,2.6) -- (2.9,2.25) -- (2.1,2.2) -- (1.6,2.2) -- (0.85,2.35) -- (0.25,2.9) -- cycle;
\fill[gray!15] (0.25,2.9) -- (0.5,1.5) -- (0.7,1.0) -- (1.4,-0.2) -- (1.5,0.2) -- (1.4,1.1) -- (1.3,1.75) -- (0.85,2.35) -- cycle;
\fill[gray!15] (2.45,1.5) -- (2.05,1.0) -- (1.4,1.1) -- (1.3,1.75) -- (1.6,2.2) -- (2.1,2.2) -- cycle;
\fill[gray!15] (3.5,0.9) -- (3.35,0.4) -- (1.4,-0.2) -- (1.35,0.4) -- (2.8,2.4) -- (3.2,2.6) -- cycle;

\node[draw, fill=gray!15, circle, inner sep=1pt, minimum size=1.0mm] (17_1) at (1.8,1.8)  {};
\node[draw, fill=gray!15, circle, inner sep=1pt, minimum size=1.0mm] (17_2) at (1.95,1.4)  {};
\node[draw, fill=gray!15, circle, inner sep=1pt, minimum size=1.0mm] (17_3) at (1.6,1.4)  {};
\node[draw, fill=gray!15, circle, inner sep=1pt, minimum size=1.0mm] (18_1) at (0.8,1.3)  {};
\node[draw, fill=white, circle, inner sep=1pt, minimum size=1.0mm] (18_2) at (0.7,1.0)  {};
\node[draw, fill=white, circle, inner sep=1pt, minimum size=1.0mm] (18_3) at (0.5,1.5)  {};
\node[draw, fill=white, circle, inner sep=1pt, minimum size=1.0mm] (19_1) at (3.35,0.4)  {};
\node[draw, fill=gray!15, circle, inner sep=1pt, minimum size=1.0mm] (19_2) at (3.25,0.7)  {};
\node[draw, fill=white, circle, inner sep=1pt, minimum size=1.0mm] (19_3) at (3.5,0.9)  {};
\node[draw, fill=gray!15, circle, inner sep=1pt, minimum size=1.0mm] (20_1) at (2.4,3.4)  {};
\node[draw, fill=white, circle, inner sep=1pt, minimum size=1.0mm] (20_2) at (2.3,3.7)  {};
\node[draw, fill=white, circle, inner sep=1pt, minimum size=1.0mm] (20_3) at (2.7,3.6)  {};

\draw[line width=0.3mm] (20_2) -- (20_3);
\draw[line width=0.3mm, color=gray!95] (20_1) -- (20_2);
\draw[line width=0.3mm, color=gray!95] (20_1) -- (20_3);

\draw[line width=0.3mm, color=gray!30] (19_3) -- (20_3);
\draw[line width=0.3mm, color=gray!30] (18_2) -- (19_1);
\draw[line width=0.3mm, color=gray!30] (18_3) -- (20_2);
\draw[line width=0.3mm, color=gray!30] (17_1) -- (20_1);
\draw[line width=0.3mm, color=gray!30] (17_2) -- (19_2);
\draw[line width=0.3mm, color=gray!30] (17_3) -- (18_1);

\node[draw, fill=white, circle, inner sep=1pt, minimum size=1.0mm] (10_1) at (1.4,1.1)  {};
\node[draw, fill=gray!15, circle, inner sep=1pt, minimum size=1.0mm] (10_2) at (1.15,1.45)  {};
\node[draw, fill=white, circle, inner sep=1pt, minimum size=1.0mm] (10_3) at (1.3,1.75)  {};
\node[draw, fill=gray!15, circle, inner sep=1pt, minimum size=1.0mm] (12_1) at (2.05,1.0)  {};
\node[draw, fill=gray!15, circle, inner sep=1pt, minimum size=1.0mm] (12_2) at (2.45,1.5)  {};
\node[draw, fill=gray!15, circle, inner sep=1pt, minimum size=1.0mm] (12_3) at (2.5,1.25)  {};
\node[draw, fill=white, circle, inner sep=1pt, minimum size=1.0mm] (13_1) at (1.6,2.2)  {};
\node[draw, fill=gray!15, circle, inner sep=1pt, minimum size=1.0mm] (13_2) at (1.9,2.35)  {};
\node[draw, fill=white, circle, inner sep=1pt, minimum size=1.0mm] (13_3) at (2.1,2.2)  {};

\draw[line width=0.3mm, color=gray!30] (17_2) -- (17_3);
\draw[line width=0.3mm, color=gray!30] (17_1) -- (17_2);
\draw[line width=0.3mm, color=gray!30] (17_1) -- (17_3);

\draw[line width=0.3mm, color=gray!95] (19_2) -- (12_3);
\draw[line width=0.3mm, color=gray!95] (18_1) -- (10_2);

\draw[line width=0.3mm, color=gray!95] (10_2) -- (10_3);
\draw[line width=0.3mm, color=gray!95] (10_1) -- (10_2);

\draw[line width=0.3mm, color=gray!30] (10_2) -- (17_3);
\draw[line width=0.3mm, color=gray!30] (13_2) -- (17_1);
\draw[line width=0.3mm, color=gray!30] (12_3) -- (17_2);

\draw[line width=0.3mm, color=gray!95] (10_1) -- (12_1);
\draw[line width=0.3mm, color=gray!95] (10_3) -- (13_1);
\draw[line width=0.3mm, color=gray!95] (12_2) -- (13_3);

\draw[line width=0.3mm, color=gray!95] (13_2) -- (13_3);
\draw[line width=0.3mm, color=gray!95] (13_1) -- (13_2);
\draw[line width=0.3mm, color=gray!95] (13_1) -- (13_3);

\draw[line width=0.3mm, color=gray!95] (12_2) -- (12_3);
\draw[line width=0.3mm, color=gray!95] (12_1) -- (12_2);
\draw[line width=0.3mm, color=gray!95] (12_1) -- (12_3);

\draw[line width=0.3mm] (18_2) -- (18_3);
\draw[line width=0.3mm, color=gray!95] (18_1) -- (18_2);
\draw[line width=0.3mm, color=gray!95] (18_1) -- (18_3);

\draw[line width=0.3mm, color=gray!95] (19_2) -- (19_3);
\draw[line width=0.3mm, color=gray!95] (19_1) -- (19_2);
\draw[line width=0.3mm] (19_1) -- (19_3);

\node[draw, fill=gray!15, circle, inner sep=1pt, minimum size=1.0mm] (1_2) at (1.5,0.2)  {};
\draw[line width=0.3mm, color=gray!95] (1_2) -- (10_1);
\node[draw, fill=white, circle, inner sep=1pt, minimum size=1.0mm] (1_1) at (1.4,-0.2)  {};
\node[draw, fill=white, circle, inner sep=1pt, minimum size=1.0mm] (1_3) at (1.35,0.4)  {};
\node[draw, fill=white, circle, inner sep=1pt, minimum size=1.0mm] (4_1) at (2.8,2.4)  {};
\node[draw, fill=white, circle, inner sep=1pt, minimum size=1.0mm] (4_2) at (3.2,2.6)  {};
\node[draw, fill=gray!15, circle, inner sep=1pt, minimum size=1.0mm] (4_3) at (2.9,2.25)  {};
\node[draw, fill=white, circle, inner sep=1pt, minimum size=1.0mm] (7_1) at (0.25,2.9)  {};
\node[draw, fill=white, circle, inner sep=1pt, minimum size=1.0mm] (7_2) at (0.85,2.35)  {};
\draw[line width=0.3mm, color=gray!95] (7_2) -- (13_1);
\draw[line width=0.3mm, color=gray!95] (7_2) -- (10_3);

\node[draw, fill=white, circle, inner sep=1pt, minimum size=1.0mm] (7_3) at (0.6,2.7)  {};

\draw[line width=0.3mm, color=gray!95] (4_2) -- (4_3);
\draw[line width=0.3mm] (4_1) -- (4_2);
\draw[line width=0.3mm, color=gray!95] (4_1) -- (4_3);

\draw[line width=0.3mm, color=gray!95] (1_2) -- (1_3);
\draw[line width=0.3mm, color=gray!95] (1_1) -- (1_2);
\draw[line width=0.3mm] (1_1) -- (1_3);

\draw[line width=0.3mm, color=gray!95] (7_2) -- (7_3);
\draw[line width=0.3mm, color=gray!95] (7_1) -- (7_2);
\draw[line width=0.3mm] (7_1) -- (7_3);

\draw[line width=0.3mm, color=gray!95] (10_1) -- (10_3);

\draw[line width=0.3mm, color=gray!95] (1_2) -- (12_1);
\draw[line width=0.3mm, color=gray!95] (4_3) -- (12_2);

\draw[line width=0.3mm, color=gray!95] (20_1) -- (13_2);
\draw[line width=0.3mm, color=gray!95] (4_3) -- (13_3);
\draw[line width=0.3mm] (1_3) -- (7_3);
\draw[line width=0.3mm] (1_1) -- (18_2);
\draw[line width=0.3mm] (1_1) -- (19_1);
\draw[line width=0.3mm] (19_3) -- (4_2);
\draw[line width=0.3mm] (1_3) -- (4_1);
\draw[line width=0.3mm] (4_2) -- (20_3);
\draw[line width=0.3mm] (4_1) -- (7_3);
\draw[line width=0.3mm] (20_2) -- (7_1);
\draw[line width=0.3mm] (7_1) -- (18_3);
\end{tikzpicture}
\end{center}
\caption{\footnotesize The Schlegel diagrams of the truncated (left), rectified (middle), and bitruncated (right) 4-simplex. The facets $F'_i$ are coloured darker than $F_i$ (cf. Table \ref{table:comb} below).}
\label{fig:truncated_simplices}
\end{figure}

\medskip

The rectified simplex $Q_{b}$ is in fact the convex hull of the midpoints of the edges of the regular simplex $\Delta=Q_{c}$, while $Q_a$ is another rectified simplex. For all $s\in[a,c]$, the polytope $Q_s$ is uniform.

\subsection{The combinatorics of the rectified and bitruncated 4-simplices}\label{sec:combi}

We describe the combinatorics of $Q_s$ for $s \in (a,b]$.

\medskip

The link of a vertex of the rectified (resp. bitruncated) 4-simplex is a triangular prism (resp. a tetrahedron) as in Figure \ref{fig:link_comb}. Each vertex of $Q_s$ is the intersection of facets $F'_{i_1}\cap F'_{i_2}\cap F_{i_3}\cap F_{i_4}\cap F_{i_5}$ for all distinct $i_1,\ldots,i_5$ (resp. $F'_{i_1}\cap F'_{i_2}\cap F_{i_3}\cap F_{i_4}$ for all distinct $i_1,\ldots,i_4$), where $F_i$ and $F'_i$ denote the facets of $Q_s$ whose supporting hyperplanes are $\partial H_i$ and $\partial H'_i$, respectively. The 10 facets of $Q_s$ are divided into 5 octahedra (resp. truncated tetrahedra) $F_i\subset \partial H_i$, and 5 tetrahedra (resp. truncated tetrahedra) $F'_i\subset \partial H'_i$. For all $i\neq j$, the ridge $F_i\cap F_j$ is a triangle, $F'_i\cap F'_j$ is a vertex (resp. triangle), and $F_i\cap F'_j$ is a triangle (resp. hexagon), while $F_i\cap F'_i=\varnothing$ (see  Table \ref{table:comb}).

\begin{table}[!h]
\begin{center}
\begin{tabular}{c||c|c|c}
 			   	   & truncated 4-simplex    & rectified 4-simplex & bitruncated 4-simplex \\
 \hline
 $F_i$		  	   & truncated tetrahedron  & octahedron          &  truncated tetrahedron \\
 $F'_j$ 		   & tetrahedron            & tetrahedron         & truncated tetrahedron \\
 \hline
 $F_i \cap F_j$    & hexagon                & triangle            & triangle  \\
 $F_i \cap F'_i$   & $\varnothing$          & $\varnothing$       & $\varnothing$ \\
 $F_i \cap F'_j$   & triangle               & triangle            & hexagon   \\
 $F'_i \cap F'_j$  & $\varnothing$          & vertex              & triangle  \\
 \hline
\end{tabular}
\end{center}
\caption{\footnotesize Some information on the faces of the truncated, rectified and bitruncated $4$-simplex. The symbols $i,j$ are two distinct indices in $\{1,\ldots, 5 \}$.
}\label{table:comb}
\end{table}

\begin{figure}[!h]
\centering
\begin{tabular}{ccc}
\begin{tikzpicture}[line cap=round,line join=round,>=triangle 45,x=1cm,y=1cm]
\draw [line width=1.5pt] (-1,4)-- (0,3);
\draw [line width=1.5pt] (0,3)-- (1,4);
\draw [line width=1.5pt] (1,4)-- (-1,4);
\draw [line width=1.5pt] (-1,4)-- (-1,0.5);
\draw [line width=1.5pt] (-1,0.5)-- (1,0.5);
\draw [line width=1.5pt] (1,0.5)-- (0,1.5);
\draw [line width=1.5pt] (0,1.5)-- (-1,0.5);
\draw [line width=1.5pt] (1,0.5)-- (1,4);
\draw [line width=1.5pt] (0,3)-- (0,1.5);
\node[draw,circle, inner sep=0.2pt, minimum size=0.2pt] at (0,3.6) {$i'_1$};
\node[draw,circle, inner sep=0.2pt, minimum size=0.2pt] at (0,0.93)  {$i'_2$};
\node[draw,circle, inner sep=0.2pt, minimum size=0.2pt] at (0.43,2.4)  {$i_4$};
\node[draw,circle, inner sep=0.2pt, minimum size=0.2pt] at (-0.57,2.4)  {$i_3$};
\node[draw,circle, inner sep=0.2pt, minimum size=0.2pt] at (-1.58,2.4)  {$i_5$};
\end{tikzpicture}
& $\qquad \qquad$
\begin{tikzpicture}[line cap=round,line join=round,>=triangle 45,x=1cm,y=1cm]
\draw [line width=1.5pt] (0,1.8)-- (0,-0.2);
\draw [line width=1.5pt] (-1.7320508075688774,-1.2)-- (0,-0.2);
\draw [line width=1.5pt] (0,-0.2)-- (1.7320508075688774,-1.2);
\draw [line width=1.5pt] (1.7320508075688774,-1.2)-- (0,1.8);
\draw [line width=1.5pt] (0,1.8)-- (-1.7320508075688774,-1.2);
\draw [line width=1.5pt] (-1.7320508075688774,-1.2)-- (1.7320508075688774,-1.2);

\node[draw,circle, inner sep=0.2pt, minimum size=0.2pt] at (-0.45,0.22) {$i'_1$};
\node[draw,circle, inner sep=0.2pt, minimum size=0.2pt] at (0.45,0.22) {$i'_2$};
\node[draw,circle, inner sep=0.2pt, minimum size=0.2pt] at (0,-0.75) {$i_3$};
\node[draw,circle, inner sep=0.2pt, minimum size=0.2pt] at (0,-1.6) {$i_4$};
\end{tikzpicture}
\\
\end{tabular}
 \caption{\footnotesize The vertex link of the rectified (left) and bitruncated (right) 4-simplex $Q_s$. A facet of the link labelled by $i$ (resp $i'$) corresponds to the facet $F_i$ (resp. $F'_i$) of the 4-polytope $Q_s$.}
 \label{fig:link_comb}
\end{figure}
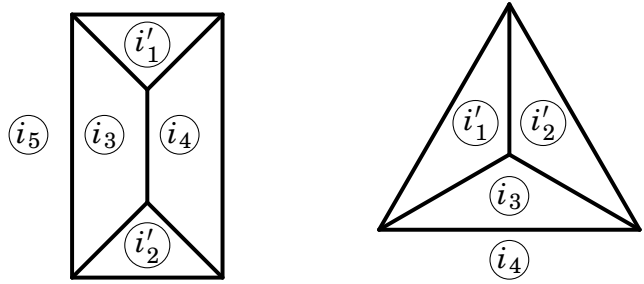

\medskip

The following is a simple observation, but it will be useful later to prove Theorem \ref{thm:main} (more precisely, Proposition \ref{prop:P_theta}). 

\begin{lemma}\label{lem:combi}
Let $Q_s$ be the rectified or bitruncated $4$-simplex for $s \in (a,b]$. Relabel each facet $F'_i$ of $Q_s$ with $F_{i'}$, and let $S := \{1',\ldots,5',1,\ldots, 5\}$.
\begin{enumerate}
\item\label{lem:combi_rectified} In the case that $Q_s$ is the rectified $4$-simplex, i.e. $s = b$, each vertex of $Q_s$ corresponds to a subset $\{i',j',k,l,m\} \subset S $ with $\sharp \{ i,j,k,l,m\} = 5$ and each edge of $Q_s$ corresponds to a subset $\{i',j,k\} \subset S $ with $\sharp \{ i,j,k \} = 3$.

\item\label{lem:combi_bitruncated} In the case that $Q_s$ is the bitruncated $4$-simplex, i.e. $a< s < b$, each vertex of $Q_s$ corresponds to a subset $\{i',j',k,l\} \subset S$ with $\sharp \{ i,j,k,l\} = 4$.
\end{enumerate}
\end{lemma}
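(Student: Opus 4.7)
The plan is to derive both assertions directly from the combinatorial description in Section \ref{sec:combi}, translating it into the unified labeling of $S$; the argument is essentially bookkeeping, with no serious obstacle. For part (\ref{lem:combi_rectified}), I would first realize $Q_b$ as the convex hull of the midpoints $v_{ij} = \tfrac{1}{2}(v_i + v_j)$ of the edges of $\Delta = Q_c$, giving $10$ vertices indexed by unordered pairs $\{i,j\} \subset \{1,\ldots,5\}$. Inspecting inner products with the $v_k$, one checks that $v_{ij}$ lies on the facet $F_k$ iff $k \notin \{i,j\}$, and on the facet $F'_l$ iff $l \in \{i,j\}$. Hence the set of facets of $Q_b$ containing $v_{ij}$ is $\{F'_i, F'_j\} \cup \{F_k : k \notin \{i,j\}\}$, which in the new labeling reads $\{i', j', k_1, k_2, k_3\}$ with $5$ distinct underlying indices; there are exactly $\binom{5}{2} = 10$ such subsets, so the correspondence is bijective.

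For the edges of $Q_b$, use that two vertices $v_{ij}, v_{kl}$ are joined by an edge of $Q_b$ exactly when the edges $[v_i, v_j]$ and $[v_k, v_l]$ of $\Delta$ share a vertex, i.e.~when $|\{i,j\} \cap \{k,l\}| = 1$. Writing such an edge as $\{v_{mj}, v_{ml}\}$ with $m, j, l$ pairwise distinct, the facet-incidence rules above show that the facets containing both endpoints are exactly $F'_m$ together with $F_{k_1}, F_{k_2}$ for the two indices $k_1, k_2 \in \{1,\ldots,5\} \smallsetminus \{m, j, l\}$. So each edge corresponds to the $3$-element subset $\{m', k_1, k_2\}$ with $3$ distinct underlying indices. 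As a consistency check, there are $5 \cdot \binom{4}{2} = 30$ such subsets, matching the count obtained from the vertex figure: a triangular prism has $6$ vertices, so each of the $10$ vertices of $Q_b$ has $6$ incident edges, giving $30$ edges in total.

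Part (\ref{lem:combi_bitruncated}) is then an immediate translation of the description recorded in Section \ref{sec:combi}: every vertex of the bitruncated $4$-simplex is an intersection $F'_{i_1} \cap F'_{i_2} \cap F_{i_3} \cap F_{i_4}$ for distinct $i_1, \ldots, i_4$, which is compatible with the vertex figure being a tetrahedron (so each vertex is simple and meets exactly $4$ facets). In the new labeling this reads $\{i_1', i_2', i_3, i_4\}$ with $4$ distinct underlying indices, and the $\binom{5}{2}\binom{3}{2} = 30$ such subsets match the count of $30$ vertices computed from incidences ($10$ truncated-tetrahedron facets each contributing $12$ vertices, $4$ of them at each vertex of $Q_s$). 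The only mild subtlety in the whole argument is the identification of the edges in part (\ref{lem:combi_rectified}), which is not spelled out in Section \ref{sec:combi} but is handled cleanly by the midpoint-of-edges realization of $Q_b$.
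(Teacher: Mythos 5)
Your proof is correct, and it does a bit more than the paper itself, which offers no proof at all: the lemma is introduced there as ``a simple observation'' to be read off from the combinatorial description of Section \ref{sec:combi} (Table \ref{table:comb} and the vertex links of Figure \ref{fig:link_comb}). Your vertex correspondences in both parts agree with the facet-incidence data recorded there, and your counts ($\binom{5}{2}=10$ vertices and $30$ edges for the rectified case, $\binom{5}{2}\binom{3}{2}=30$ vertices for the bitruncated case) are all right. The one place where you genuinely add content is the edge correspondence in part (\ref{lem:combi_rectified}): the paper never describes the edges of $Q_b$ explicitly, and your midpoint-of-edges realization, with the adjacency criterion $|\{i,j\}\cap\{k,l\}|=1$ and the incidence rules $v_{ij}\in F_k \Leftrightarrow k\notin\{i,j\}$ and $v_{ij}\in F'_l \Leftrightarrow l\in\{i,j\}$, settles it cleanly. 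The paper's implicit route to the same fact is via the vertex figure: a vertex of the triangular prism in Figure \ref{fig:link_comb} lies on exactly one triangular facet ($i'_1$ or $i'_2$) and two of the three square facets ($i_3,i_4,i_5$), so each edge of $Q_b$ through a given vertex meets facets of the form $\{i',j,k\}$. Either argument is fine; yours has the advantage of being self-contained rather than relying on the unproved assertions of Section \ref{sec:combi}.
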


\subsection{The ideal hyperbolic rectified 4-simplex}\label{subsec:hyperbolic_rectified}

Every vertex-transitive polytope $Q\subset\R^n$ can be realised as an ideal hyperbolic $n$-polytope, obtained by interpreting the ball in which $Q$ is inscribed as a projective model of the hyperbolic $n$-space $\Hb^n$. What is nice about the rectified simplex of dimension $n \leq 4$ is that its regular ideal hyperbolic realisation $R\subset\Hb^n$ is a Coxeter polytope.\footnote{It is well-known that the link of the regular ideal hyperbolic rectified $n$-simplex is a Euclidean right simplicial $(n-1)$-prism with regular $(n-2)$-simplicial bases, and the dihedral angle of the Euclidean regular $(n-2)$-simplex is $\arccos(\nicefrac{1}{(n-2)})$. The latter is $\nicefrac{\pi}{m}$ for some integer $m \geqslant 2$ if and only if $n \leqslant 4$. Hence, $R$ is a Coxeter polytope if and only if $n \leqslant 4$.} For instance, the polytope $R$ of dimension 3 is a right-angled hyperbolic octahedron. 

\medskip

Let $R\subset\Hb^4$ be the ideal rectified 4-simplex. The facets of $R$ are regular ideal Coxeter $3$-polytopes: five right-angled octahedra $F_i$ and five $\nicefrac\pi3$-angled tetrahedra $F'_i$. The horospherical link of any (ideal) vertex of $R$ is a Euclidean right triangular prism with equilateral bases (see the left of Figure \ref{fig:link_comb}). Thus, for all $i\neq j$ the dihedral angle at a ridge $F_i\cap F_j$ (resp. $F_i  \cap F'_j$) is $\nicefrac{\pi}{3}$ (resp. $\nicefrac{\pi}{2}$), while $F'_i$ and $F'_j$ are parallel, i.e. the facet $F'_i$ is tangent to $F'_j$ at infinity. 

\begin{remark}\label{rem:mathematica}
The bitruncated 4-simplex is ``combinatorially'' a ``filling'' of the ideal rectified 4-simplex in the sense that the latter is obtained from the former by collapsing each triangular ridge $F'_i\cap F'_j$ for $i \neq j$ to a point and removing it. We call such triangles the \emph{filling ridges} of the bitruncated 4-simplex.
\end{remark}

\subsection{Decomposability and Euler characteristics}

We conclude the section with a remark on convex projective manifolds that has probably been noticed by many experts of the subject, but whose explicit statement seems to miss in the literature. This remark is a simple consequence of works of Vey, Benoist and Gottlieb.

\begin{fact}\label{fact:indecomposable}
If a convex projective manifold $\Omega/_\Gamma$ is decomposable, then $\chi(\O/_\G)=0$.
\end{fact}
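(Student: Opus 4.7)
The plan is to exploit the non-trivial projective automorphisms of $\Omega$ arising from the decomposition to produce a nowhere-vanishing vector field on a finite cover of $\Omega/\Gamma$, and then to conclude via the Poincar\'e--Hopf theorem (or Gottlieb).

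I would begin by writing $\Omega=\Cv(\Omega_1\cup\Omega_2)$ with each $\Omega_i$ a properly convex domain in a projective subspace $\Pb(V_i)$, and $V=V_1\oplus V_2$, and lifting this to a direct-sum decomposition $\widehat{\Omega}=\widehat{\Omega}_1\oplus\widehat{\Omega}_2$ of the associated sharp convex cone in $V$. By a classical result in convexity theory (Vey, see also Benoist \cite{convexe_div_1}), the decomposition of an open properly convex cone into indecomposable summands is unique up to permutation of the factors. Hence $\Aut(\Omega)$ permutes $\{\Omega_1,\Omega_2\}$, and a finite-index subgroup $\Gamma'\leq\Gamma$ preserves each $V_i$ individually; every element of $\Gamma'$ is then block-diagonal with respect to the splitting $V=V_1\oplus V_2$.

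Next, I would introduce the one-parameter subgroup $\Phi_t\in\mathrm{PGL}(V)$ defined by $\Phi_t|_{V_1}=\mathrm{Id}$ and $\Phi_t|_{V_2}=e^t\,\mathrm{Id}$. Since scalar matrices are central in each $\mathrm{GL}(V_i)$, the flow $\Phi_t$ commutes with every element of $\Gamma'$, and being homogeneous on each factor of $\widehat{\Omega}$ it preserves $\Omega$. For $t\neq 0$, the fixed-point set of $\Phi_t$ in $\Pb(V)$ equals $\Pb(V_1)\cup\Pb(V_2)$, which meets $\overline{\Omega}$ only along the proper faces $\overline{\Omega_1},\overline{\Omega_2}\subset\partial\Omega$; hence $\Phi_t$ is free on $\Omega$ and descends to a fixed-point-free smooth flow on $\Omega/\Gamma'$, whose infinitesimal generator is a nowhere-vanishing vector field.

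Since $\Omega/\Gamma$ is a closed manifold (so that $\chi$ is defined), so is the finite cover $\Omega/\Gamma'$. The Poincar\'e--Hopf theorem---or equivalently Gottlieb's theorem on self-homotopy-equivalences without fixed points---then forces $\chi(\Omega/\Gamma')=0$, and multiplicativity of $\chi$ under finite covers yields $\chi(\Omega/\Gamma)=0$. The main obstacle is the first step: producing the finite-index subgroup $\Gamma'$ that preserves the decomposition, which rests on the canonical uniqueness of the decomposition of properly convex cones due to Vey and Benoist. Once this is granted, the remainder of the argument is essentially formal.
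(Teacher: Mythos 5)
Your argument is sound for closed $\Omega/_\Gamma$ --- which is the only case in which the Fact is used, and essentially the only case in which it can hold as stated (a properly convex triangle with $\Gamma$ trivial is decomposable and has $\chi=1$, so some compactness is implicitly assumed) --- but it takes a genuinely different route from the paper's. The paper's proof is purely homotopy-theoretic: by Proposition 4.4 of Benoist's \emph{Convexes divisibles II} (building on Vey), decomposability forces the centre of $\Gamma$ to contain a non-trivial free abelian subgroup, while Gottlieb's theorem says that the fundamental group of a finite aspherical polyhedron with $\chi\neq0$ has trivial centre; the contradiction finishes the proof. You use only the soft half of this circle of ideas --- the canonicity of the splitting of the cone and the resulting one-parameter subgroup $\Phi_t$ centralising a finite-index subgroup --- and convert it into a nowhere-vanishing vector field on a finite cover, concluding with Poincar\'e--Hopf. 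Your version is more elementary in that it never needs $\Gamma$ itself to contain central elements (the harder, cocompactness-dependent part of Benoist's proposition): the continuous centraliser in $\mathrm{PGL}(V)$ suffices. The paper's version, being a statement about $\pi_1$ alone, needs no smooth structure and applies to any finite aspherical polyhedron, not just closed manifolds. Two details worth tightening in your write-up: the object canonically permuted by $\Aut(\Omega)$ is the set of \emph{indecomposable} summands of the cone, so you should take $V_1$ to be one indecomposable factor and $V_2$ the sum of the remaining ones (an arbitrary two-factor splitting need not be preserved up to finite index); and what descends to $\Omega/_{\Gamma'}$ is the $\Gamma'$-invariant generating vector field, which is nowhere zero downstairs because it is nowhere zero upstairs --- this is cleaner than discussing fixed points of the induced flow, since $\Phi_t(x)=\gamma x$ with $\gamma\neq\mathrm{Id}$ is a priori possible.
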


\begin{proof} 
Assume by contradiction that $\Omega/_\Gamma$ is decomposable but $\chi(\Omega/_\Gamma) \neq 0$. By a theorem of Gottlieb \cite[Corollary IV.3]{gottlieb}, the fundamental group of a finite, aspherical polyhedron with non-zero Euler characteristic has trivial centre. But since $\Omega/_\Gamma$ is decomposable, by Proposition 4.4 of Benoist \cite{vey,cd2}, the centre of $\Gamma$ contains a non-trivial free abelian subgroup.
\end{proof}

In particular, since $\chi(X)\neq0$, the convex projective manifold $X$ of Theorem \ref{thm:main} is indecomposable.

\section{The proof of Theorem \ref{thm:main}} \label{sec:proof}

In this section, we prove Theorem \ref{thm:main}. In Section \ref{sec:P_alpha}, we perform convex projective generalised Dehn filling to the ideal hyperbolic rectified 4-simplex $R\subset\Hb^4$, and build mirror polytopes $P_{\alpha}$ combinatorially equivalent\footnote{Two polytopes $Q$ and $Q'$ are \emph{combinatorially equivalent} if the face poset of $Q$ is isomorphic to the face poset of $Q'$.} to the bitruncated 4-simplex. In Section \ref{sec:caprace}, we show that the Coxeter group $W_p=W_{P_{\nicefrac{\pi}{p}}}$ of the Coxeter polytope $P_{\nicefrac{\pi}{p}}$ is relatively hyperbolic. In Sections \ref{sec:M} and \ref{sec:X}, we construct the cusped hyperbolic 4-manifold $M$, by gluing some copies of $R$, and a filling $X$ of $M$, respectively. Finally, in Section \ref{sec:sigma_alpha}, we give $X$ a structure of projective cone-manifold induced by $P_\alpha$, and finish the proof of Theorem \ref{thm:main}.

\subsection{Deforming the rectified 4-simplex} \label{sec:P_alpha}

In this subsection, we obtain a family of projective structures on the orbifold $\mathcal{O}_R$ associated to the ideal hyperbolic rectified 4-simplex $R$ by deforming its original complete hyperbolic structure.

\medskip

Accordingly, the composition of two projective reflections along the facets $F'_i$ and $F'_j$ of $\mathcal{O}_R$, $i\neq j$, (recall the notation from Section \ref{sec:truncated_simplex}) will deform to be conjugate to a projective rotation of angle $2\alpha > 0$. At the original hyperbolic structure on $\mathcal{O}_R$, the facets $F'_i$ and $F'_j$ are parallel, i.e. $\alpha=0$. But, at the deformed projective structure on $\mathcal{O}_R$, new ridges $F'_i\cap F'_j$ can be added to $\mathcal{O}_R$ to form a mirror polytope $P_{\alpha}$ that is combinatorially equivalent to a bitruncated 4-simplex, where $\alpha$ is the dihedral angle of the ridge $F'_i\cap F'_j$. So, the goal is to prove the following:

\begin{proposition}\label{prop:P_theta}
There exists a path $(0,\nicefrac{\pi}{3}]\ni\alpha\mapsto P_\alpha$ of mirror polytopes with the combinatorics of a bitruncated 4-simplex and dihedral angles:
\begin{itemize}
\item $\alpha$ at the filling ridges $F'_i\cap F'_j$,
\item $\nicefrac\pi2$ at the ridges $F_i\cap F'_j$, and
\item $\nicefrac\pi3$ at the ridges $F_i\cap F_j$.
\end{itemize}
Moreover, the limit $P_0$ is the ideal hyperbolic rectified 4-simplex $R$.
\end{proposition}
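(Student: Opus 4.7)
The plan is to apply Vinberg's existence theorem (Theorem \ref{thm:vinberg_unique}) to an explicit $10\times 10$ Cartan matrix $A_\alpha$ depending on $\alpha\in(0,\pi/3]$. Label the prospective facets as $S=\{1,\ldots,5,1',\ldots,5'\}$. The prescribed dihedral angles translate, via $a_{st}a_{ts}=4\cos^{2}\theta_{st}$, into the constraints $a_{ij}a_{ji}=1$ for unprimed $i\neq j$, $a_{i'j'}a_{j'i'}=4\cos^{2}\alpha$ for primed $i\neq j$, and $a_{ij'}a_{j'i}=0$ for $i\neq j$ (forcing both entries to vanish by the Cartan sign condition). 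The opposite pairs $\{i,i'\}$ correspond to non-adjacent facets, so the remaining entries $a_{ii'}=-c_1(\alpha),\,a_{i'i}=-c_2(\alpha)$ are chosen with $c_1c_2\geq 4$ and tuned as explicit functions of $\alpha$, following the generalised Dehn filling scheme of \cite[\S 12]{CLM}, so as to enforce the rank condition below.

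\textbf{Verifying Vinberg's hypotheses.} One must check that $A_\alpha$ is Cartan (immediate), irreducible (its support graph is connected, since primed facets are linked by the $-2\cos\alpha$ entries, unprimed by the $-1$ entries, and the two blocks are joined by the non-zero opposite-pair entries), and of negative type. For the latter, criterion $\mathrm{(\!(N)\!)}$ of Proposition \ref{prop:cartan_type} applied to the positive test vector $y=\mathbf{1}\in\R^{10}$ gives a componentwise inequality $A_\alpha\mathbf{1}<0$ which reduces to the easy estimates $-2-c_1<0$ on unprimed rows and $2-8\cos\alpha-c_2<0$ on primed rows, clear for any $c_1,c_2\geq 2$. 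The remaining, and technically most demanding, condition is $\operatorname{rank} A_\alpha=5$, equivalent to $P_\alpha$ sitting inside $\Sb^4$; here the specific form of $c_1(\alpha),c_2(\alpha)$ from \cite{CLM} is essential.

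\textbf{Combinatorics and the limit.} Given $A_\alpha$ satisfying the above, Theorem \ref{thm:vinberg_unique} produces a unique mirror polytope $P_\alpha\subset\Sb^4$ with Cartan matrix $A_\alpha$. I would read off the face poset via Theorem \ref{thm:vinberg}: for each subset $T\subset S$, one checks whether $A_T$ is of positive type. At every prospective vertex $T=\{i'_1,i'_2,i_3,i_4\}$ with distinct $i_1,i_2,i_3,i_4$, the submatrix $A_T$ is block-diagonal, with a $2\times 2$ block encoding the angle $\alpha$ (positive for $\alpha\in(0,\pi/2)$) and an $A_2$-block encoding the angle $\pi/3$, hence positive; every $T$ containing an opposite pair $\{i,i'\}$ fails positivity since $c_1c_2\geq 4$ produces an $\infty$-type subdiagram; the analogous checks for prospective edges and ridges match the bitruncated-4-simplex face lattice as described in Lemma \ref{lem:combi}.(\ref{lem:combi_bitruncated}). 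As $\alpha\to 0^+$, $a_{i'j'}a_{j'i'}\to 4$, which is exactly the parallelism condition for $F'_i,F'_j$ in $\Hb^4$; by continuous dependence of the polytope on its Cartan matrix, $P_\alpha$ converges in the Hausdorff topology to the ideal rectified 4-simplex $R$.

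\textbf{Main obstacle.} The crux of the argument is enforcing $\operatorname{rank} A_\alpha=5$ uniformly in $\alpha$. A fully $S_5$-symmetric ansatz with $c_1=c_2$ produces two independent rank-annihilating conditions on the single product $c_1c_2$ (one per $S_5$-isotypic component of $\R^{10}$), which happen to coincide only at $\alpha=0$; so a non-trivial 1-parameter deformation for $\alpha>0$ requires the more delicate parametrisation of the opposite-pair entries that \cite{CLM} supplies. Checking that this parametrisation yields rank exactly $5$ (rather than $6$ or $9$), together with the negative-type and combinatorial verifications above, is the technical heart of the proof.
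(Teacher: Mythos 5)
Your strategy is the same as the paper's: write down an explicit $10\times10$ Cartan matrix realising the prescribed angle data, feed it to Vinberg's existence theorem (Theorem \ref{thm:vinberg_unique}), and read off the combinatorics from Theorem \ref{thm:vinberg}. You have also correctly located the crux: producing a one-parameter family of such matrices of rank exactly $5$. But that is precisely the step you do not carry out, and deferring it to \cite{CLM} does not close the gap: the matrix for the rectified $4$-simplex is not in \cite{CLM} (whose fillings concern different polytopes; \S 12 there is about convergence of holonomies), it is constructed in this paper, via the explicit functions $f,h,g_p,\overline g_p$ of a parameter $t\in[t_3,1]$ and the matrix $C_t$ of Section \ref{sec:P_alpha}. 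Moreover, the extra freedom needed to hit rank $5$ is not where you place it. Your ansatz leaves only the opposite-pair entries $c_1,c_2$ free, with the primed block symmetric with entries $-2\cos\alpha$; as your own isotypic-component count suggests, that is too rigid. The paper instead breaks the $S_5$-symmetry of the \emph{primed--primed} block down to a cyclic $\Z/5$-symmetry, using four distinct pairs $(-g_p(t),-\overline g_p(t))$, $p=0,\dots,3$, all with product $4f(t)=4\cos^2\alpha$ but with unequal cycle-products, so that $C_t$ is not even symmetrizable for $t<1$; the rank-$5$ verification is then a direct (if long) computation (Lemma \ref{lem:rank}). Without exhibiting some such family, existence is not established. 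Incidentally, once rank $5$ is known, negative type is automatic from Proposition \ref{prop:cartan_type} (types $\mathrm{(\!(P)\!)}$ and $\mathrm{(\!(Z)\!)}$ force rank $10$ or $9$), which is cleaner than your test-vector argument, though the latter is also valid.

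Two smaller points. First, Theorem \ref{thm:vinberg} only gives a \emph{sufficient} condition for a subset $T$ to index a face, so "checking whether $A_T$ is of positive type" does not by itself determine the face poset; in particular your remark that subsets containing an opposite pair "fail positivity" proves nothing about their not being faces. The paper avoids this by showing that the poset generated by the vertex sets $\{i',j',k,l\}$ is a subposet of $\sigma(P_\alpha)$ dual to the face poset of the bitruncated $4$-simplex, and then invoking the fact that two polytopes of the same dimension with nested face posets are combinatorially equivalent. Second, for the limit $\alpha=0$ an appeal to "continuous dependence" is not enough to identify $P_0$ with the \emph{ideal hyperbolic} rectified simplex: one must check separately that the vertex subsets $\{i',j',k,l,m\}$ of $C_1$ are of zero type and rank $3$ (so Theorem \ref{thm:vinberg}.(\ref{thm:vinberg_faces_zero}) applies) and that $C_1$ is equivalent to a symmetric matrix of signature $(4,1)$, which is what places $P_0$ in $\Hb^4$.
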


Before proving Proposition \ref{prop:P_theta}, we begin with some auxiliary lemmas. First, let
$$t_3 = \frac12\left(11 + 9\sqrt2 - 3\sqrt{31 + 22\sqrt2}\right)\approx0.0422 $$
and for $t\in\left[t_3,1\right]$,
$$f(t)=\frac{t(t+2)^3(2t+1)^3}{(t^2+t+1)^2(t^2+7t+1)^2}, \;\; h(t) = 2 + \frac{1}{t} + \frac{ t (t+2)^4 }{(t^2+t+1)(t^2+7t+1)},$$
$$ g_p(t) = \frac{2 t^p (t+2)^p (2t+1)^{3-p}}{(t^2+t+1)(t^2+7t+1)} \;\; \textrm{and} \;\;  \overline{g}_p(t) = \frac{4 f(t)}{g_p(t)}, \;\; \textrm{where} \;\; p=0,1,2,3.$$
The number $t_{3}$ is the unique positive solution less than $1$ satisfying the equation $f(t_3) = \nicefrac{1}{4}= \cos^{2} (\nicefrac{\pi}{3})$. Since $t_3$ is positive, it is easy to check:

\begin{lemma} \label{lem:negative_coefficients}
The functions $f,h,g_p$ and $\overline g_p\colon\left[t_3,1\right]\to\R$ are well-defined and positive.
\end{lemma}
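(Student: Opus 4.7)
The proof is a routine sign check, and the only genuine work is verifying that the two quadratic denominators do not vanish on $[t_3,1]$. The plan is as follows. First I would observe that the quadratic $t^2+t+1$ has discriminant $1-4=-3<0$ and positive leading coefficient, hence it is strictly positive on all of $\R$. Second, the quadratic $t^2+7t+1$ has discriminant $49-4=45$ and roots $\tfrac{-7\pm 3\sqrt{5}}{2}$; since $3\sqrt{5}<7$, both roots are strictly negative, so $t^2+7t+1>0$ on $[0,\infty)$ and in particular on $[t_3,1]$.

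Since the explicit value of $t_3$ is positive (one verifies numerically, or directly from the defining equation $f(t_3)=\nicefrac14$, that $t_3\approx 0.0422>0$), the remaining factors $t$, $t+2$, and $2t+1$ are all strictly positive on $[t_3,1]$. It follows immediately that $f(t)$ and $g_p(t)$ are well-defined and strictly positive on $[t_3,1]$ as ratios of strictly positive quantities. For $h(t)$, each of the three summands $2$, $\tfrac{1}{t}$, and $\tfrac{t(t+2)^4}{(t^2+t+1)(t^2+7t+1)}$ is positive on $[t_3,1]$ by the same observations, so $h(t)>0$. Finally, $\overline{g}_p(t)=\tfrac{4f(t)}{g_p(t)}$ is a ratio of strictly positive quantities, hence well-defined and strictly positive.

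There is essentially no obstacle here; the statement is a purely algebraic positivity check whose role is presumably to license later manipulations (ratios, substitutions, Cartan matrix entries) without repeatedly reverifying that denominators do not vanish and that signs are correct.
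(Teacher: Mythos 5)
Your proof is correct and matches the paper's approach: the paper simply asserts that the lemma "is easy to check" once one knows $t_3>0$, and your verification (negative discriminant for $t^2+t+1$, both roots of $t^2+7t+1$ negative, and all remaining factors positive on $[t_3,1]$) is exactly the routine check being alluded to.
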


Given $t \in [t_3,1]$, we now consider the matrix
$$C_t =
\left(
\begin{array}{ccccc | ccccc}
2 & -\overline{g}_0(t) & -\overline{g}_1(t) & -\overline{g}_2(t) & -\overline{g}_3(t) & -h(t) & 0 & 0 & 0 & 0 \\
-g_0(t) & 2 & -\overline{g}_0(t) & -\overline{g}_1(t) & -\overline{g}_2(t) & 0 & -h(t) & 0 & 0 & 0 \\
-g_1(t) & -g_0(t) & 2 & -\overline{g}_0(t) & -\overline{g}_1(t) & 0 & 0 & -h(t) & 0 & 0 \\
-g_2(t) & -g_1(t) & -g_0(t) & 2 & -\overline{g}_0(t) & 0 & 0 & 0 & -h(t) & 0 \\
-g_3(t) & -g_2(t) & -g_1(t) & -g_0(t) & 2 & 0 & 0 & 0 & 0 & -h(t) \\
\hline
-2 & 0 & 0 & 0 & 0 & 2 & -\nicefrac1t & -\nicefrac1t & -\nicefrac1t & -\nicefrac1t \\
0 & -2 & 0 & 0 & 0 & -t & 2 & -\nicefrac1t & -\nicefrac1t & -\nicefrac1t \\
0 & 0 & -2 & 0 & 0 & -t & -t & 2 & -\nicefrac1t & -\nicefrac1t \\
0 & 0 & 0 & -2 & 0 & -t & -t & -t & 2 & -\nicefrac1t \\
0 & 0 & 0 & 0 & -2 & -t & -t & -t & -t & 2 \\
\end{array}
\right).
$$

\begin{lemma} \label{lem:rank}
For every $t\in\left[t_3,1\right]$, the matrix $C_t$ is an irreducible Cartan matrix of negative type and of rank $5$.
\end{lemma}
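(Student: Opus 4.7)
The lemma has four claims: $C_t$ is a Cartan matrix, it is irreducible, its rank equals $5$, and it is of negative type. My plan is to dispatch the first two by direct inspection, obtain rank $5$ from a single block-matrix identity, and derive negative type from Vinberg's trichotomy.

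For the Cartan property, the diagonal entries are visibly $2$ and each nonzero off-diagonal entry has the form $-\overline{g}_p(t)$, $-g_p(t)$, $-h(t)$, $-2$, $-\nicefrac{1}{t}$, or $-t$, which is strictly negative by Lemma~\ref{lem:negative_coefficients} and $t > 0$. The zero pattern is symmetric, since the only off-diagonal zeros sit in the two off-diagonal $5 \times 5$ blocks, which are the scalar matrices $-h(t) I_5$ and $-2 I_5$. Irreducibility follows at once: inside each diagonal $5 \times 5$ block every off-diagonal entry is nonzero, and the two blocks are linked by the nonzero diagonal entries of $-h(t) I_5$.

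For rank, I would write $C_t = \bigl(\begin{smallmatrix} A & B \\ E & D \end{smallmatrix}\bigr)$ with $A, D$ the two diagonal $5 \times 5$ blocks, $B = -h(t) I_5$, and $E = -2 I_5$. The central step is to verify the matrix identity
$$ A(t) \cdot D(t) \;=\; 2\, h(t) \cdot I_5 \qquad(\star) $$
for all $t \in [t_3, 1]$. This amounts to $25$ rational-function identities in $t$, which I intend to verify by direct expansion (or with a computer algebra system); in effect, the formulas for $f, g_p, \overline{g}_p, h$ were reverse-engineered so that $(\star)$ holds. Granting $(\star)$, the top block-row of $C_t$ equals
$$ (A\ \ B) \;=\; -\tfrac{1}{2}\, A \cdot (E\ \ D) \;=\; \bigl(A,\; -\tfrac{1}{2}\, A D \bigr) \;=\; \bigl(A,\; -h(t)\, I_5 \bigr), $$
so the first five rows of $C_t$ lie in the row-span of the last five. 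Therefore $\mathrm{rank}(C_t) = \mathrm{rank}(E\ \ D) = 5$, the equality coming from $E = -2 I_5$ already having rank $5$.

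Negative type then follows from Proposition~\ref{prop:cartan_type}: an irreducible $10 \times 10$ Cartan matrix is of positive type exactly when it is nonsingular (rank $10$), of zero type exactly when its rank is $9$, and of negative type in every remaining case. Since $\mathrm{rank}(C_t) = 5$, it must be of negative type. The main obstacle is the verification of $(\star)$; although routine, the identity is not transparent from the definitions. Morally it encodes the fact that, by design, $C_t$ is the Cartan matrix of a $4$-dimensional mirror polytope with the combinatorics of a bitruncated simplex (cf.\ Proposition~\ref{prop:P_theta}), so its rank is forced to equal $4+1 = 5$.
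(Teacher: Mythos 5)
Your proof is correct and follows the same strategy as the paper: the Cartan and irreducibility conditions are checked by inspection using Lemma~\ref{lem:negative_coefficients}, the rank is computed to be $5$, and negative type then follows from the trichotomy of Proposition~\ref{prop:cartan_type} because $5 < 9 = 10-1$. The paper dismisses the rank claim as ``a simple but long computation'', and your block identity $(\star)$ is a clean and valid way to organize exactly that computation --- I spot-checked several entries (e.g.\ the $(1,1)$ and $(1,4)$ entries reduce to polynomial identities such as $t(t+2)^3 + (2t+1)(1-t)^2 = (t^2+t+1)(t^2+7t+1)$, and the remaining rows follow from the cyclic symmetry of Remark~\ref{rem:symmmetry}), and they all hold.
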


\begin{proof}
It easily follows from Lemma \ref{lem:negative_coefficients} that $C_t$ is an irreducible Cartan matrix. A simple but long computation shows that the rank of $C_t$ is $5$, which is less than $9 = 10 - 1$. Hence, the irreducible Cartan matrix $C_t$ is of negative type by Proposition \ref{prop:cartan_type}.
\end{proof}

\begin{lemma} \label{lem:alpha}
There exists a monotonically decreasing analytic function $\alpha\colon\left[t_3,1\right]\to\R$ satisfying
$$\cos^2\alpha(t)=f(t),\quad\alpha(t_3)=\frac\pi3,\quad\alpha(1)=0.$$
\end{lemma}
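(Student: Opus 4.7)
The strategy is to define $\alpha(t):=\arccos\sqrt{f(t)}$, so that $\cos^{2}\alpha(t)=f(t)$ by construction. The required monotonicity, boundary values, and interior analyticity then follow from corresponding properties of $f$; only analyticity at the right endpoint $t=1$ needs a little extra care.

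First I would verify the boundary data. The equality $f(t_{3})=\tfrac{1}{4}$ is the definition of $t_{3}$, and direct substitution gives
\[
f(1) \;=\; \frac{1\cdot 3^{3}\cdot 3^{3}}{3^{2}\cdot 9^{2}} \;=\; 1.
\]
Next I would show that $f$ is strictly increasing on $(t_{3},1)$ by computing the logarithmic derivative
\[
\frac{f'(t)}{f(t)} \;=\; \frac{1}{t}+\frac{3}{t+2}+\frac{6}{2t+1}-\frac{2(2t+1)}{t^{2}+t+1}-\frac{2(2t+7)}{t^{2}+7t+1},
\]
clearing denominators, and collecting terms. A routine (if slightly tedious) simplification yields
\[
\frac{f'(t)}{f(t)} \;=\; \frac{2(1-t)\bigl(t^{5}+3t^{4}+23t^{3}+23t^{2}+3t+1\bigr)}{t(t+2)(2t+1)(t^{2}+t+1)(t^{2}+7t+1)}.
\]
The quintic factor and the denominator have only positive coefficients, so they are manifestly positive for $t>0$; hence $f'(t)>0$ on $(t_{3},1)$ and $f'(1)=0$. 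Combined with $f(t_{3})=\tfrac{1}{4}<1=f(1)$, this makes $f$ a strictly increasing analytic bijection $[t_{3},1]\to[\tfrac{1}{4},1]$.

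With these facts, $\alpha(t)=\arccos\sqrt{f(t)}$ is strictly decreasing from $\pi/3$ to $0$ on $[t_{3},1]$ with the prescribed boundary values. It is manifestly analytic at every point where $\sqrt{f(t)}\in(-1,1)$, hence on $[t_{3},1)$ (the value $\sqrt{f(t_{3})}=\tfrac{1}{2}$ causes no trouble). The only delicate point is the right endpoint $t=1$, where $\arccos$ has a vertical tangent; this is the main technical obstacle. I would resolve it using the vanishing $f'(1)=0$ obtained above: the factorisation shows $f'(t)=(1-t)H(t)$ for an analytic function $H$ with $H(1)>0$, so integrating from $1$ to $t$ gives
\[
1-f(t)\;=\;(1-t)^{2}\,g(t)
\]
for some analytic $g$ with $g(1)=H(1)/2>0$ on a neighbourhood of $1$. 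Therefore $\sin\alpha(t)=\sqrt{1-f(t)}=(1-t)\sqrt{g(t)}$ extends to an analytic function vanishing at $t=1$, and $\alpha(t)=\arcsin\!\bigl((1-t)\sqrt{g(t)}\bigr)$ is analytic on a one-sided neighbourhood of $t=1$ since $\arcsin$ is analytic at $0$. Patching with interior analyticity completes the proof.
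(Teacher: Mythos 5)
Your proposal is correct and follows essentially the same route as the paper: compute $f'$, factor it as $(1-t)$ times a manifestly positive function to get strict monotonicity of $f$ from $\tfrac14$ to $1$, and then define $\alpha=\arccos\sqrt{f}$ (your factorisation agrees with the paper's, since $(t+1)(t^4+2t^3+21t^2+2t+1)=t^5+3t^4+23t^3+23t^2+3t+1$). The only difference is that the paper declares the conclusion to "follow easily," whereas you spell out the one genuinely delicate point -- analyticity at $t=1$, where $\arccos$ has a vertical tangent -- via the double zero of $1-f$ at $t=1$ and the $\arcsin$ reparametrisation, which is a correct and welcome completion of that step.
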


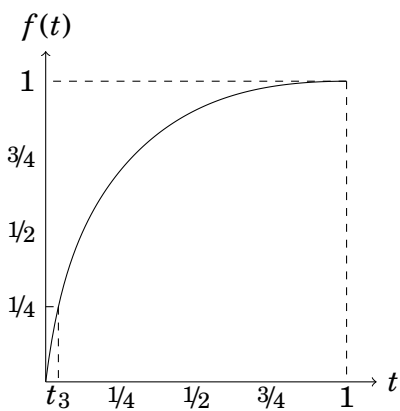
\begin{figure}[!h]
\centering
\begin{tikzpicture}
\draw[->, scale=4] (0,0) -- (1.1,0);
\draw[scale=4] (1.1,0) node[right] {$t$};
\draw [->, scale=4] (0,0) -- (0,1.1);
\draw[scale=4] (0,1.1) node[above] {$f(t)$};
\draw[domain=0:1, scale=4,samples=100] plot [variable=\t] (\t,{\t*(\t+2)^3*(2*\t+1)^3)/((\t^2+\t+1)^2*(\t^2+7*\t+1)^2});
\draw [dashed, scale=4] (0,0.25) -- (0.0422,0.25) node[left] {};
\draw [dashed, scale=4] (0.0422,0.25) -- (0.0422,0) node[right] {};
\draw [scale=4] (0.0422,-0.05) node[] {$t_3$};

\draw [dashed, scale=4] (1,1) -- (0,1) node[left] {$1$};
\draw [dashed, scale=4] (1,1) -- (1,0) node[right] {};
\draw [scale=4] (1,-0.05) node[] {$1$};

\draw [dashed, scale=4] (0,0.75) node[left] {$\nicefrac{3}{4}$};
\draw [dashed, scale=4] (0,0.5) node[left] {$\nicefrac{1}{2}$};
\draw [dashed, scale=4] (0,0.25) node[left] {$\nicefrac{1}{4}$};

\draw [scale=4] (0.25,-0.05) node[] {$\nicefrac{1}{4}$};
\draw [scale=4] (0.5,-0.05) node[] {$\nicefrac{1}{2}$};
\draw [scale=4] (0.75,-0.05) node[] {$\nicefrac{3}{4}$};
\end{tikzpicture}
\caption{\footnotesize The graph of $f(t)$ over the interval $[0,1]$}
\label{func:graph_f}
\end{figure}

\begin{proof}
The derivative $f'$ of $f$ satisfies:
$$f'(t)= - \frac{2(t-1)(t+1)(t+2)^2(2t+1)^2(t^4+2t^3+21t^2+2t+1)}{(t^2+t+1)^3(t^2+7t+1)^3}.$$
Since $f'(t)$ is positive for all $t \in [t_3, 1)$ and is zero for $t=1$, the function $f \colon [t_3,1] \rightarrow \R$ is monotonically increasing with $f(t_3)=\nicefrac{1}{4}$ and $f(1)=1$ (see Figure \ref{func:graph_f}). The lemma now follows easily.
\end{proof}

We are finally ready to prove Proposition \ref{prop:P_theta} by applying Theorems \ref{thm:vinberg_unique} and \ref{thm:vinberg}. Recall the combinatorics of the truncated, rectified and bitruncated 4-simplex, described in Table \ref{table:comb}.  See Figure \ref{fig:link_geom} for a geometric picture of the vertex link of the mirror polytope $P_\alpha$.

\begin{figure}[!h]
\centering
\begin{tabular}{ccc}
\definecolor{ccqqqq}{rgb}{0.8,0,0}
\begin{tikzpicture}[line cap=round,line join=round,>=triangle 45,x=1cm,y=1cm]
\draw [line width=1.5pt] (-1,4)-- (0,3);
\draw [line width=1.5pt] (0,3)-- (1,4);
\draw [line width=1.5pt] (1,4)-- (-1,4);
\draw [line width=1.5pt,color=ccqqqq] (-1,4)-- (-1,0.5);
\draw [line width=1.5pt] (-1,0.5)-- (1,0.5);
\draw [line width=1.5pt] (1,0.5)-- (0,1.5);
\draw [line width=1.5pt] (0,1.5)-- (-1,0.5);
\draw [line width=1.5pt,color=ccqqqq] (1,0.5)-- (1,4);
\draw [line width=1.5pt,color=ccqqqq] (0,3)-- (0,1.5);
\node[draw,circle, inner sep=0.2pt, minimum size=0.2pt] at (0,3.6) {$i'_1$};
\node[draw,circle, inner sep=0.2pt, minimum size=0.2pt] at (0,0.93)  {$i'_2$};
\node[draw,circle, inner sep=0.2pt, minimum size=0.2pt] at (0.43,2.4)  {$i_4$};
\node[draw,circle, inner sep=0.2pt, minimum size=0.2pt] at (-0.57,2.4)  {$i_3$};
\node[draw,circle, inner sep=0.2pt, minimum size=0.2pt] at (-1.58,2.4)  {$i_5$};
\end{tikzpicture}
& $\qquad \qquad$
\definecolor{qqwuqq}{rgb}{0,0.39215686274509803,0}
\definecolor{ccqqqq}{rgb}{0.8,0,0}
\begin{tikzpicture}[line cap=round,line join=round,>=triangle 45,x=1cm,y=1cm]
\draw [line width=1.5pt,color=qqwuqq] (0,1.8)-- (0,-0.2);
\draw [line width=1.5pt] (-1.7320508075688774,-1.2)-- (0,-0.2);
\draw [line width=1.5pt] (0,-0.2)-- (1.7320508075688774,-1.2);
\draw [line width=1.5pt] (1.7320508075688774,-1.2)-- (0,1.8);
\draw [line width=1.5pt] (0,1.8)-- (-1.7320508075688774,-1.2);
\draw [line width=1.5pt, color=ccqqqq] (-1.7320508075688774,-1.2)-- (1.7320508075688774,-1.2);

\node[draw,circle, inner sep=0.2pt, minimum size=0.2pt] at (-0.45,0.22) {$i'_1$};
\node[draw,circle, inner sep=0.2pt, minimum size=0.2pt] at (0.45,0.22) {$i'_2$};
\node[draw,circle, inner sep=0.2pt, minimum size=0.2pt] at (0,-0.75) {$i_3$};
\node[draw,circle, inner sep=0.2pt, minimum size=0.2pt] at (0,-1.6) {$i_4$};
\end{tikzpicture}
\\
\end{tabular}
\caption{\footnotesize The vertex links of a mirror polytope are also mirror polytopes. For $P_\alpha$ we have: $(i)$ a right triangular prism with equilateral bases for the (ideal) rectified 4-simplex $P_0$ (left), and $(ii)$ a tetrahedron for the bitruncated 4-simplex $P_\alpha$, $\alpha\in(0,\frac\pi3]$ (right). The edges of dihedral angle $\nicefrac{\pi}{2}$ (resp. $\nicefrac{\pi}{3}$, resp. $\alpha$) are drawn in black (resp. red, resp. green).}
\label{fig:link_geom}
\end{figure}
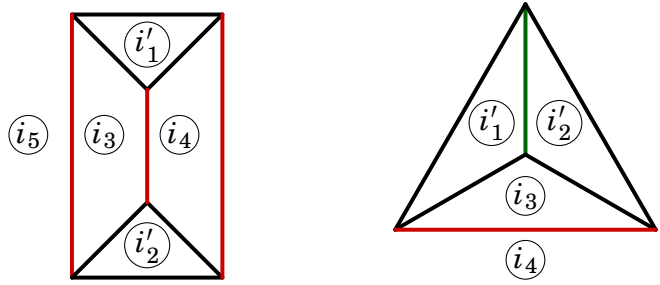

\begin{proof}[Proof of Proposition \ref{prop:P_theta}]
Let us call and order the indices of $C_t$ as $S:=\{ 1',\ldots,5',1,\ldots,5 \}$. Lemmas \ref{lem:rank} and \ref{lem:alpha} together with Theorem \ref{thm:vinberg_unique} imply that there exists a path $[0,\nicefrac{\pi}{3}]\ni \alpha \mapsto P_\alpha$ of mirror polytopes in $\mathbb{S}^4$ with facets $\{ F_s\}_{s\in S}$ and with Cartan matrix $C_t$. The facet $F_{i'}$ of $P_\alpha$ ($i = 1, \dotsc,  5$) is also denoted by $F_i'$.

\medskip

We consider two cases separately: $(i)$ $\alpha \in (0,\nicefrac{\pi}{3}]$ and $(ii)$ $\alpha = 0$.

\medskip

In the case $(i)$, equivalently, $t \in [t_3,1)$, by Remark \ref{rem:ridge} (or Theorem \ref{thm:vinberg}.(\ref{thm:vinberg_faces})), the intersections $F'_{i} \cap F'_{j}$, $F_i \cap F'_{j}$ and $F_i \cap F_j$, $i \neq j$, are ridges of $P_{\alpha}$ and their dihedral angles are $\alpha$, $\nicefrac{\pi}{2}$ and $\nicefrac{\pi}{3}$, respectively, because for example $$ (C_t)_{i' j'} (C_t)_{j' i'} = g_p(t) \overline{g}_p(t) = 4 f(t) = 4 \cos^2\alpha < 4.$$ 
We now claim that $P_{\alpha}$ is combinatorially equivalent to the bitruncated $4$-simplex. For every subset $T = \{ i', j', k, l \} \subset S$ with $\sharp \{i, j, k, l \} = 4$, the submatrix $(C_t)_T$ is the direct sum $(C_t)_{\{i', j'\}} \oplus (C_t)_{\{k, l\}}$ of matrices of positive type, hence $\{ F_{s} \}_{s \in T} \in \sigma(P_\alpha)$ \footnote{See the end of Section \ref{sec:gen_dehn_fill} for the definition of the poset $\sigma(P)$.} and $ \cap_{s \in T} F_{s}$ is a vertex of $P_\alpha$ by Theorem \ref{thm:vinberg}.(\ref{thm:vinberg_faces}). 

That is, if $\Vc$ denotes the set of all subsets 
$\{ F_{i}', F_{j}', F_{k}, F_{l} \}$ with $\sharp \{ i,j,k,l\} = 4$, then $\Vc \subset \sigma(P_{\alpha})$. Let $\hat{S} = \{ F_s \}_{s \in S}$, and let $\Fc$ be the subposet of $2^{\hat{S}}$ defined by:
$$
\Fc := \{ \hat{T}' \in 2^{\hat{S}} \mid \hat{T}' \subset \hat{T} \textrm{ for some } \hat{T} \in \Vc  \}.
$$ 

As in the previous argument, Theorem \ref{thm:vinberg}.(\ref{thm:vinberg_faces}) implies that $\Fc$ is a subposet of $\sigma(P_{\alpha})$. We know, in addition, from Lemma \ref{lem:combi}.(\ref{lem:combi_bitruncated}) and Figure \ref{fig:link_comb} that the poset $\Fc$ is dual to the face poset of the bitruncated 4-simplex. It is a well-known fact (e.g. \cite[Exercise 1.1.20]{BP15}) that if two polytopes $Q$ and $Q'$ are of same dimension and the face poset of $Q'$ is a subposet of the face poset of $Q$, then $Q$ is combinatorially equivalent to $Q'$. As a result, the polytope $P_{\alpha}$ is combinatorially equivalent to the bitruncated $4$-simplex, as claimed.

\medskip

In the case $(ii)$, equivalently, $t=1$, we claim that $P_{0}$ is combinatorially equivalent to the rectified 4-simplex. For every subset $U = \{ i', j', k, l, m \} \subset S$ with $\sharp \{i, j, k, l, m \} = 5$, the submatrix $(C_1)_U$ is the direct sum $(C_1)_{\{i', j'\}} \oplus (C_1)_{\{k, l, m\}}$ of matrices of zero type and the rank of $(C_1)_U$ is $3$. Hence $\{ F_{s} \}_{s \in U} \in \sigma(P_0)$ and $ \cap_{s \in U} F_{s}$ is a vertex of $P_0$ by Theorem \ref{thm:vinberg}.(\ref{thm:vinberg_faces_zero}). Furthermore, for every subset $U' = \{ i', j, k \} \subset S$ with $\sharp \{i, j, k \} = 3$, the submatrix $(C_1)_{U'}$ is of positive type. Hence $\{ F_{s} \}_{s \in U'} \in \sigma(P_0)$ and $ \cap_{s \in U'} F_{s}$ is an edge of $P_0$ by Theorem \ref{thm:vinberg}.(\ref{thm:vinberg_faces}). Then, as in the proof of case $(i)$, using Lemma \ref{lem:combi}.(\ref{lem:combi_rectified}) and Figure \ref{fig:link_comb}, we may conclude that the polytope $P_{0}$ is combinatorially equivalent to the rectified $4$-simplex, as claimed. Finally, a simple computation shows that the Cartan matrix $C_1$ is equivalent to a symmetric matrix of signature $(4,1)$, and therefore the polytope $P_{0}$ (without vertices) may be identified with the ideal hyperbolic rectified $4$-simplex $R$.
\end{proof}

\begin{remark} \label{rem:symmmetry}
The symmetry group of the mirror polytope $P_\alpha$, $\alpha \in [0,\nicefrac{\pi}{3}]$, is of order $\geq 5$. For, if we set $\hat{Q} = \bigl(\begin{smallmatrix}
Q & 0 \\ 0 & Q
\end{smallmatrix} \bigr)$ with
$$ 
Q =
\left(
\begin{array}{ccccc}
0 & 0 & 0 & 0 & 1 \\
1 & 0 & 0 & 0 & 0 \\
0 & 1 & 0 & 0 & 0 \\
0 & 0 & 1 & 0 & 0 \\
0 & 0 & 0 & 1 & 0 \\
\end{array}
\right),
$$
then $\hat{Q}$ is a permutation matrix of order $5$ and $\hat{Q} C_t {\hat{Q}}^{-1}$ is equivalent to $C_t$. 
\end{remark}

\begin{remark}
A (bit more complicated) computation reveals that the deformation space of projective structures on the orbifold $\mathcal{O}_R$ associated to the ideal hyperbolic rectified $4$-simplex $R$ is six-dimensional. In other words, one can find a six-parameter family of Cartan matrices (of rank $5$) which correspond to projective structures on $\mathcal{O}_R$. But, we choose a particular one-parameter family of Cartan matrices having symmetry as described in Remark \ref{rem:symmmetry} in order to simplify the computation.
\end{remark}

We end the subsection with the following.

\begin{corollary} \label{cor:coxeter}
For every integer $p\geq3$, the mirror polytope $P_{\nicefrac{\pi}{p}}$ is a Coxeter polytope. Moreover, if $\Gamma_p$ denotes the subgroup of $\mathrm{SL}^{\pm}_5\R$ generated by the associated reflections, then the $\Gamma_p$-orbit of $P_{\nicefrac{\pi}{p}}$ is a properly convex domain $\Omega_p$ of $\mathbb{S}^4$, i.e. it is divisible by $\Gamma_p$.
\end{corollary}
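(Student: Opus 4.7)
The plan is to verify the Coxeter condition and then to apply Theorem \ref{theo_vinberg}, with the delicate point being proper convexity.

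For the first assertion, by Proposition \ref{prop:P_theta} the dihedral angles appearing in $P_{\nicefrac{\pi}{p}}$ are $\nicefrac{\pi}{p}$, $\nicefrac{\pi}{2}$ and $\nicefrac{\pi}{3}$, all of the form $\nicefrac{\pi}{m}$ with $m \in \Z_{\geq 2}$ as soon as $p \geq 3$. Hence $P_{\nicefrac{\pi}{p}}$ is a Coxeter polytope.

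For the remaining assertions I would start by identifying the Coxeter stabiliser $W_v$ of each vertex of $P_{\nicefrac{\pi}{p}}$. By Lemma \ref{lem:combi}.(\ref{lem:combi_bitruncated}) each vertex is of the form $F_{i'_1} \cap F_{i'_2} \cap F_{i_3} \cap F_{i_4}$; combining Proposition \ref{prop:P_theta} with Figure \ref{fig:link_geom} (right), the Coxeter diagram of $W_v$ consists of two disjoint edges, one labelled $p$ between the two primed facets and one labelled $3$ between the two unprimed facets. Hence $W_v$ is the product of two finite dihedral groups of orders $2p$ and $6$, and in particular is spherical for every $p \geq 3$. Any standard subgroup of a spherical Coxeter group is spherical, so the same holds for every face stabiliser $W_f$. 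Theorem \ref{theo_vinberg}.(4) then gives $P_{\nicefrac{\pi}{p}} \subset \Omega_p$, making $P_{\nicefrac{\pi}{p}}$ a compact fundamental domain for the $\Gamma_p$-action on $\Omega_p$; in particular this action is cocompact.

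The delicate step is to show that $\Omega_p$ is properly convex. For this I would invoke the negative-type regime of Vinberg's theory: by Lemma \ref{lem:rank} the Cartan matrix $C_t$ is irreducible of negative type and has full rank $5 = \dim \Sb^4 + 1$, so by the negative-type case of Proposition \ref{prop:cartan_type} there is a vector $y \in \R^{10}_{>0}$ with $C_t y < 0$. Following Vinberg \cite{V}, such a $y$ furnishes a $\Gamma_p$-invariant properly convex open cone in $\R^5$ projecting onto $\Omega_p$, ruling out antipodal pairs in $\overline{\Omega_p}$. Combined with the cocompactness above, this shows that $\Omega_p$ is a properly convex domain of $\Sb^4$ divisible by $\Gamma_p$.
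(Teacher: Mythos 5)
Your first three steps coincide with the paper's proof: the Coxeter condition is read off from the dihedral angles of Proposition \ref{prop:P_theta}, each vertex stabiliser is identified as $D_p\times D_3$ (spherical), and Theorem \ref{theo_vinberg}.(4) then places every proper face of $P_{\nicefrac{\pi}{p}}$ inside $\Omega_p$, so that $P_{\nicefrac{\pi}{p}}$ is a compact fundamental domain and the action is cocompact. Up to that point the argument is correct and is exactly the paper's.

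The step you flag as delicate --- proper convexity --- is where your argument does not work as written. From Proposition \ref{prop:cartan_type}$\mathrm{(\!(N)\!)}$ a vector $y>0$ with $C_ty<0$ does exist, but what it produces is the vector $w=\sum_t y_t b_t$ satisfying $\alpha_s(w)<0$ for all $s$, i.e.\ a point in the \emph{interior of the fundamental cone} $\{v\mid \alpha_s(v)\leqslant 0\ \forall s\}$ over $P_{\nicefrac{\pi}{p}}$. This says nothing about the $\Gamma_p$-orbit $\Omega_p$; there is no construction in Vinberg's theory that converts such a $y$ directly into a $\Gamma_p$-invariant properly convex cone projecting onto $\Omega_p$ (and if there were, the conclusion would be immediate and circular). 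The paper itself does not spell out proper convexity in the proof of the corollary --- it is absorbed into the citation of Vinberg's theory --- but the honest route is different from yours: since $C_t$ is irreducible, of negative type and of rank $5$, and $W_p$ is infinite and not virtually abelian, the group $\Gamma_p$ acts (strongly) irreducibly on $\R^5$ (cf.\ \cite[Theorem 2.18]{cox_in_hil}, quoted later in the paper); hence the lineality space of the closed convex cone over $\Omega_p$ is $\Gamma_p$-invariant, so it is $0$ or all of $\R^5$, and the latter is excluded because an infinite group cannot act properly discontinuously on $\Omega_p=\Sb^4$. So either replace your cone argument by this irreducibility argument, or simply cite the relevant statement of Vinberg as the paper implicitly does.
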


\begin{proof}
It is obvious that the mirror polytope $P_{\nicefrac{\pi}{p}}$ is a Coxeter polytope. For any subset $T = \{i',j',k,l\} \subset S$ with $\sharp \{ i,j,k,l\} = 4$, the standard subgroup $W_T$ of the Coxeter group $W_S$ of $P_{\nicefrac{\pi}{p}}$ is isomorphic to $D_p\times D_3$, where $D_m$ is the dihedral group of order $2m$, hence it is a finite group. In the proof of Proposition \ref{prop:P_theta}, we show that every vertex of $P_{\nicefrac{\pi}{p}}$ corresponds to a subset $\{i',j',k,l\} \subset S$ with $\sharp \{ i,j,k,l\} = 4$, so by Theorem \ref{theo_vinberg}, the $\Gamma_p$-orbit of $P_{\nicefrac{\pi}{p}}$ is a divisible convex domain of $\mathbb{S}^4$.
\end{proof}

\subsection{The Coxeter group $W_p$}\label{sec:caprace}

The goal of this subsection is to show that the Coxeter group $W_{p}$ of $P_{\nicefrac\pi p}$ is relatively hyperbolic with respect to a collection of virtually abelian subgroups of rank $2$.
To do so, we need to analyse the Coxeter diagram of $W_{p}$ in Figure \ref{diag_Wk}, and to use Theorem \ref{moussong_caprace} together with the (complete) list of the irreducible spherical and affine Coxeter groups in Appendix \ref{classi_diagram}.

\begin{figure}[!h]
\centering

\definecolor{ccqqqq}{rgb}{0.2,0.2,0.7}
\definecolor{qqwuqq}{rgb}{0,0,0}
\definecolor{qqqqff}{rgb}{0,0.39215686274509803,0}
\definecolor{uuuuuu}{rgb}{0.26666666666666666,0.26666666666666666,0.26666666666666666}
\begin{tikzpicture}[line cap=round,line join=round,>=triangle 45,x=1cm,y=1cm,scale=0.55]
\clip(-9.5,-9) rectangle (9.5,9.5);
\draw [line width=1.5pt,color=qqqqff] (0,6)-- (5.706339097770921,1.8541019662496832);
\draw [line width=1.5pt,color=qqqqff] (5.706339097770921,1.8541019662496832)-- (3.5267115137548375,-4.854101966249685);
\draw [line width=1.5pt,color=qqqqff] (3.5267115137548375,-4.854101966249685)-- (-3.5267115137548393,-4.854101966249684);
\draw [line width=1.5pt,color=qqqqff] (-3.5267115137548393,-4.854101966249684)-- (-5.706339097770921,1.8541019662496852);
\draw [line width=1.5pt,color=qqqqff] (-5.706339097770921,1.8541019662496852)-- (0,6);
\draw [line width=1.5pt,color=qqwuqq] (0,3)-- (-2.8531695488854605,0.9270509831248426);
\draw [line width=1.5pt,color=qqwuqq] (-2.8531695488854605,0.9270509831248426)-- (-1.7633557568774196,-2.427050983124842);
\draw [line width=1.5pt,color=qqwuqq] (-1.7633557568774196,-2.427050983124842)-- (1.7633557568774187,-2.4270509831248424);
\draw [line width=1.5pt,color=qqwuqq] (1.7633557568774187,-2.4270509831248424)-- (2.853169548885461,0.9270509831248418);
\draw [line width=1.5pt,color=qqwuqq] (2.853169548885461,0.9270509831248418)-- (0,3);
\draw [line width=1.5pt,color=qqwuqq] (0,3)-- (1.7633557568774187,-2.4270509831248424);
\draw [line width=1.5pt,color=qqwuqq] (1.7633557568774187,-2.4270509831248424)-- (-2.8531695488854605,0.9270509831248426);
\draw [line width=1.5pt,color=qqwuqq] (-2.8531695488854605,0.9270509831248426)-- (2.853169548885461,0.9270509831248418);
\draw [line width=1.5pt,color=qqwuqq] (0,3)-- (-1.7633557568774196,-2.427050983124842);
\draw [line width=1.5pt,color=qqwuqq] (-1.7633557568774196,-2.427050983124842)-- (2.853169548885461,0.9270509831248418);
\draw [line width=1.5pt,color=ccqqqq] (-5.706339097770921,1.8541019662496852)-- (-2.8531695488854605,0.9270509831248426);
\draw [line width=1.5pt,color=ccqqqq] (0,6)-- (0,3);
\draw [line width=1.5pt,color=ccqqqq] (5.706339097770921,1.8541019662496832)-- (2.853169548885461,0.9270509831248418);
\draw [line width=1.5pt,color=ccqqqq] (3.5267115137548375,-4.854101966249685)-- (1.7633557568774187,-2.4270509831248424);
\draw [line width=1.5pt,color=ccqqqq] (-1.7633557568774196,-2.427050983124842)-- (-3.5267115137548393,-4.854101966249684);
\draw [shift={(1.763355756877419,0.5729490168751576)},line width=1.50pt,color=qqqqff]  plot[domain=-1.2566370614359172:1.8849555921538756,variable=\t]({1*5.706339097770921*cos(\t r)+0*5.706339097770921*sin(\t r)},{0*5.706339097770921*cos(\t r)+1*5.706339097770921*sin(\t r)});
\draw [shift={(1.0898137920080408,-1.5)},line width=1.50pt,color=qqqqff]  plot[domain=-2.5132741228718345:0.6283185307179586,variable=\t]({1*5.706339097770921*cos(\t r)+0*5.706339097770921*sin(\t r)},{0*5.706339097770921*cos(\t r)+1*5.706339097770921*sin(\t r)});
\draw [shift={(-1.0898137920080417,-1.5)},line width=1.50pt,color=qqqqff]  plot[domain=2.5132741228718345:5.654866776461628,variable=\t]({1*5.706339097770921*cos(\t r)+0*5.706339097770921*sin(\t r)},{0*5.706339097770921*cos(\t r)+1*5.706339097770921*sin(\t r)});
\draw [shift={(-1.7633557568774194,0.5729490168751581)},line width=1.50pt,color=qqqqff]  plot[domain=1.2566370614359172:4.39822971502571,variable=\t]({1*5.70633909777092*cos(\t r)+0*5.70633909777092*sin(\t r)},{0*5.70633909777092*cos(\t r)+1*5.70633909777092*sin(\t r)});
\draw [shift={(0,1.8541019662496843)},line width=1.50pt,color=qqqqff]  plot[domain=0:3.141592653589793,variable=\t]({1*5.706339097770921*cos(\t r)+0*5.706339097770921*sin(\t r)},{0*5.706339097770921*cos(\t r)+1*5.706339097770921*sin(\t r)});
\draw (7.343586957508095,3.0875495754288003) node[anchor=north west] {$p$};
\draw (0,8.3) node {$p$};
\draw (-8.3,3.08) node[anchor=north west] {$p$};
\draw (5.0566580025686605,-5.693671219498904) node[anchor=north west] {$p$};
\draw (-5.043944881747179,-6.030846642342539) node[anchor=north west] {$p$};
\draw (4.7488021817114285,-1.2077721155792422) node[anchor=north west] {$p$};
\draw (2.7257496446496203,4.758766888653772) node[anchor=north west] {$p$};
\draw (-3.8,4.753529674748951) node[anchor=north west] {$p$};
\draw (-5.6,-1.0758339066404286) node[anchor=north west] {$p$};
\draw (-0.22087035498388263,-4.887382164872821) node[anchor=north west] {$p$};
\draw (0.5,4.5) node {$\infty$};
\draw (4,1.7) node {$\infty$};
\draw (-4,1.7) node {$\infty$};
\draw (3,-3.4) node {$\infty$};
\draw (-3,-3.4) node {$\infty$};
\begin{scriptsize}
\draw [fill=white] (0,3) circle (12pt) node {$1$};
\draw [fill=white] (0,6) circle (12pt) node {$1'$};
\draw [fill=white] (-5.706339097770921,1.8541019662496852) circle (12pt) node {$5'$};
\draw [fill=white] (-3.5267115137548393,-4.854101966249684) circle (12pt) node {$4'$};
\draw [fill=white] (3.5267115137548375,-4.854101966249685) circle (12pt) node {$3'$};
\draw [fill=white] (5.706339097770921,1.8541019662496832) circle (12pt) node {$2'$};
\draw [fill=white] (-2.8531695488854605,0.9270509831248426) circle (12pt) node {$5$};
\draw [fill=white] (-1.7633557568774196,-2.427050983124842) circle (12pt) node {$4$};
\draw [fill=white] (1.7633557568774187,-2.4270509831248424) circle (12pt) node {$3$};
\draw [fill=white] (2.853169548885461,0.9270509831248418) circle (12pt) node {$2$};
\end{scriptsize}
\end{tikzpicture}
\caption{\footnotesize The Coxeter diagram of the Coxeter group $W_p$ of $P_{\nicefrac\pi p}$}
\label{diag_Wk}
\end{figure}
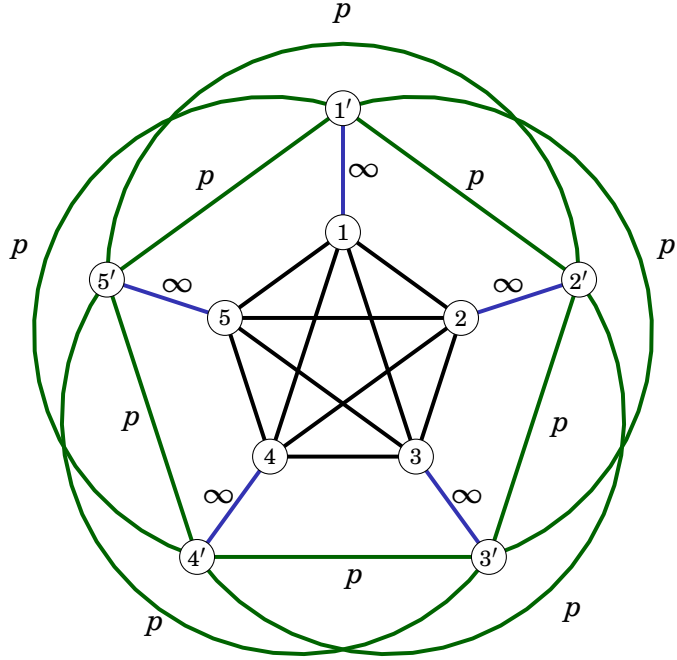

We denote by $\Tc_{p}$ the following collection of subsets of $S=\{ 1', \ldots,5', 1, \ldots,5 \}$:
\begin{itemize}
	\item In the case of $p \geq 4$, the collection $\Tc_{p}$ consists of all subsets $\{i',j',k,l,m\}$ of $S$ with $\sharp \{ i,j,k,l,m\} = 5$, so the cardinality of $\Tc_p$, $p \geq 4$, is $10$.
	\item In the case of $p = 3$, the collection $\Tc_{3}$ consists of all subsets $\{i',j',k,l,m\}$ and $\{i',j',k',l,m\}$ of $S$ with $\sharp \{ i,j,k,l,m\} = 5$, so the cardinality of $\Tc_3$ is $20$.
\end{itemize}
For each $U \in \Tc_p$, the standard subgroup $(W_p)_U$ of $W_p$ is isomorphic to $\widetilde{A}_{2} \times I_2(p)$ (see Appendix \ref{classi_diagram}), hence it is virtually isomorphic to $\Z^{2}$.

\begin{proposition}\label{prop:rel_hyp}
The Coxeter group $W_p$ is relatively hyperbolic with respect to the collection of subgroups $ \{ (W_p)_U \mid U \in \Tc_p \}$, in particular, a collection of virtually abelian subgroups of rank $2$.
\end{proposition}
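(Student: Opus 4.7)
The plan is to apply the Moussong--Caprace criterion (Theorem \ref{moussong_caprace}) to $W_p$ with $\Tc = \Tc_p$, checking its four conditions. First I would read off the Coxeter diagram from Figure \ref{diag_Wk}: the inner generators $\{1,\dotsc,5\}$ form a $K_5$ with every label $3$, the outer generators $\{1',\dotsc,5'\}$ form a $K_5$ with every label $p$, each matched pair $\{i,i'\}$ is joined by an $\infty$-edge, and all remaining pairs commute. Since only the labels $2, 3, p, \infty$ appear in $W_p$, the only affine subdiagrams are of type $\widetilde{A}_n$ (cycles with all labels $3$); because $K_5$ has no induced cycle of length $\geq 4$, the irreducible affine subsets of rank $\geq 3$ reduce to $\widetilde{A}_2$-triangles---the $10$ inner triangles (any $p$) plus, for $p=3$, the $10$ outer triangles. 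The rank-$2$ irreducible non-spherical subsets are precisely the five $\infty$-edges $\{i,i'\}$.

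With this list, condition (1) is immediate: each affine triangle sits inside a unique $T \in \Tc_p$ obtained by adjoining the complementary matched pair (or the complementary pair of primed generators, in the outer case when $p=3$). The crucial step is condition (2), which I claim is vacuous. The key computation is that $\{i,i'\}^{\perp} = \varnothing$: a common perpendicular $s$ would have to satisfy $M_{s,i}=2$, forcing $s \in \{j' : j \neq i\}$, and $M_{s,i'}=2$, forcing $s \in \{j : j \neq i\}$---two disjoint subsets of $S$. By the same mechanism, the perpendicular of an inner triangle $\{k,l,m\}$ is contained in the primed complement $\{i',j'\}$, whose induced subsystem $I_2(p)$ is spherical; symmetrically for outer triangles or for larger all-$3$ sub-$K_r$'s. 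Condition (3) is a finite case-check: any two distinct $T, T' \in \Tc_p$ intersect in a subset inducing a spherical Coxeter subsystem (of type $A_1$, $A_1 \times A_1$, $A_1 \times A_2$, or $A_2 \times A_2$, depending on the overlap pattern). Condition (4) follows from the observation that each $T \in \Tc_p$ splits, as a Coxeter subdiagram, into two connected components---one of type $I_2(p)$ or $I_2(3)$, the other of type $\widetilde{A}_2$---so the unique irreducible non-spherical $U \subsetneq T$ is the $\widetilde{A}_2$-component, and its perpendicular is exactly the complementary component of $T$.

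The ``in particular'' statement follows from this same product decomposition: each $(W_p)_U$ is isomorphic to $\widetilde{A}_2 \times I_2(p)$, which is virtually $\Z^2$ because $\widetilde{A}_2$ is virtually $\Z^2$ and $I_2(p)$ is finite. The main subtlety I anticipate is making the enumeration in condition (4) exhaustive: one needs to rule out the possibility that some ``mixed'' subset of $T$ (combining primed and unprimed generators) is itself irreducible and non-spherical. But since $T$ never contains a matched pair $\{i,i'\}$ and the only edges crossing the primed--unprimed boundary of $S$ are precisely the $\infty$-edges $\{i,i'\}$, any mixed subset of $T$ is disconnected in the Coxeter diagram; so the only connected non-spherical subsystem inside $T$ is indeed the affine triangle, and the verification of condition (4) is complete.
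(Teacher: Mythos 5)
Your proposal is correct and follows exactly the paper's strategy: apply the Moussong--Caprace criterion (Theorem \ref{moussong_caprace}) and verify its four conditions by inspecting the Coxeter diagram of $W_p$, with condition (2) holding vacuously and condition (4) reducing to the decomposition $T = U \sqcup U^{\perp}$ for the unique affine triangle $U \subset T$. Your write-up is in fact more detailed than the paper's (which only treats $p=3$ explicitly and asserts the other cases are similar), and your explicit handling of the ``mixed subset'' point in condition (4) and of the perpendicular computations is a welcome elaboration of steps the paper leaves to the reader.
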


\begin{proof}
We only prove it for the case $p=3$; the argument is similar for other cases $p \geq 4$. Thanks to Theorem \ref{moussong_caprace}, we just need to carefully analyse the Coxeter diagram of $W_p$ in Figure $\ref{diag_Wk}$, using the list of irreducible spherical and affine Coxeter groups in Appendix \ref{classi_diagram}. 

\medskip

First, the condition (\ref{thm:Caprace1}) holds because all irreducible affine subsets $U \subset S$ of rank $\geq 3$ are $\{i',j',k'\}$ and $\{i,j,k\}$ with $\sharp \{i,j,k\} = 3$. Second, the condition (\ref{thm:Caprace2}) holds because there does not exist a pair of irreducible non-spherical subsets $S_{1},S_{2}$ of $S$ with $S_1 \perp S_2$. Third, the condition (\ref{thm:Caprace3}) holds because for every pair $T, T' \in \Tc_3$ with $T \neq T'$, the intersection $T \cap T'$ is a subset of $\{i',j',k,l\}$ with $\sharp \{i,j,k,l\} =4$. Finally, the condition (\ref{thm:Caprace4}) holds because for every $T \in \Tc_3$, there exists only one irreducible non-spherical subset $U \subset T$, which is either $\{i',j',k'\}$ or $\{i,j,k\}$ with $\sharp \{i,j,k\} = 3$, and $T = U\, \sqcup \, U^{\perp}$.
\end{proof}

\begin{remark}
The Coxeter group $W_p$ of $P_{\nicefrac{\pi}{p}}$ is in fact a finite-index subgroup of the following Coxeter group $\hat{W}_p$:
\begin{center}
\begin{tikzpicture}[thick,scale=0.8, every node/.style={transform shape}]
\node[draw, circle, inner sep=1pt, minimum size=4mm] (1) at (0,0)  {};
\node[draw, circle, inner sep=1pt, minimum size=4mm] (2) at (2,0)  {};
\node[draw, circle, inner sep=1pt, minimum size=4mm] (3) at (4,0)  {};
\node[draw, circle, inner sep=1pt, minimum size=4mm] (4) at (6,0)  {};
\node[draw, circle, inner sep=1pt, minimum size=4mm] (5) at (8,0)  {};
\node[draw, circle, inner sep=1pt, minimum size=4mm] (6) at (10,0)  {};

\draw (1) -- (2) node[above,midway]   {$6$};
\draw (2) -- (3) node[above,midway]   {};
\draw (3) -- (4) node[above,midway]   {};
\draw (4) -- (5) node[above,midway]   {};
\draw (5) -- (6) node[above,midway]   {$2p$};
\end{tikzpicture}
\end{center}
It is easier to verify the conditions (\ref{thm:Caprace1}) -- (\ref{thm:Caprace4}) of Theorem \ref{moussong_caprace} for the Coxeter group $\hat{W}_p$.
\end{remark}

\subsection{The cusped hyperbolic manifold} \label{sec:M}

In this subsection, we build the cusped hyperbolic manifold $M$ of Theorem \ref{thm:main}. A reader who is not familiar with hyperbolic manifolds (with totally geodesic boundary) could consult \cite[Chapter 3]{Mbook}.

\begin{figure}[!h]
\begin{center}
\begin{tikzpicture}[thick,scale=0.7, every node/.style={transform shape}]
\node[draw, circle, fill=black, inner sep=1pt, minimum size=1mm] (1) at (2,0)  {};
\node[draw, circle, fill=black, inner sep=1pt, minimum size=1mm] (2) at (6,0)  {};
\node[draw, circle, fill=black, inner sep=1pt, minimum size=1mm] (3) at (8,3.464)  {};
\node[draw, circle, fill=black, inner sep=1pt, minimum size=1mm] (4) at (6,6.928)  {};
\node[draw, circle, fill=black, inner sep=1pt, minimum size=1mm] (5) at (2,6.928)  {};
\node[draw, circle, fill=black, inner sep=1pt, minimum size=1mm] (6) at (0,3.464)  {};

\draw (0.8,5.5) node[below] {1};
\draw (4,0) node[below] {1};
\draw (7.2,5.5) node[below] {1};

\draw (0.8,2.0) node[below] {2};
\draw (4,7.4) node[below] {2};
\draw (7.2,2.0) node[below] {2};

\draw (4.8,3.95) node[below] {3};
\draw (6.15,5.6) node[below] {3};
\draw (1.85,1.6) node[below] {3};

\draw (3.15,6.75) node[below] {4};
\draw (3.6,2.8) node[below] {4};
\draw (4.7,0.8) node[below] {4};

\draw (1.0,4.6) node[below] {5};
\draw (3.3,4.6) node[below] {5};
\draw (7.0,3.0) node[below] {5};

\draw[line width=0.1mm] (1) -- (2) node[above,midway]   {};
\draw[line width=0.1mm] (1) -- (3) node[above,midway]   {};
\draw[line width=0.1mm] (1) -- (4) node[above,midway]   {};
\draw[line width=0.1mm] (1) -- (5) node[above,midway]   {};
\draw[line width=0.1mm] (1) -- (6) node[above,midway]   {};
\draw[line width=0.1mm] (2) -- (3) node[above,midway]   {};
\draw[line width=0.1mm] (2) -- (4) node[above,midway]   {};
\draw[line width=0.1mm] (2) -- (5) node[above,midway]   {};
\draw[line width=0.1mm] (2) -- (6) node[above,midway]   {};
\draw[line width=0.1mm] (3) -- (4) node[above,midway]   {};
\draw[line width=0.1mm] (3) -- (5) node[above,midway]   {};
\draw[line width=0.1mm] (3) -- (6) node[above,midway]   {};
\draw[line width=0.1mm] (4) -- (5) node[above,midway]   {};
\draw[line width=0.1mm] (4) -- (6) node[above,midway]   {};
\draw[line width=0.1mm] (5) -- (6) node[above,midway]   {};
\end{tikzpicture}
\end{center}
\caption{\footnotesize The edge-labelled complete graph $K_6$}
\label{fig:K6}
\end{figure}
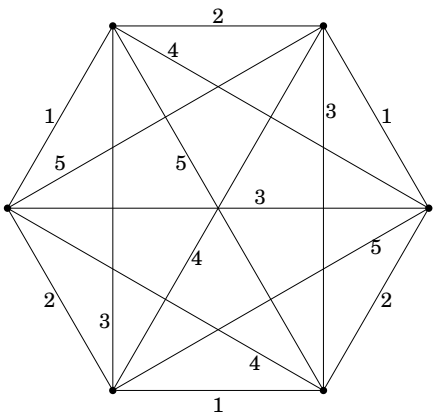

\medskip

We first recall the construction of a building block $B$ by Kolpakov and Slavich \cite{KS}. Consider the complete graph $K_6$ on 6 vertices with its edges labelled by numbers in $\{1,\ldots,5\}$ so that adjacent edges have distinct labels (see Figure \ref{fig:K6}).
For each vertex of $K_6$, take a copy of the ideal hyperbolic rectified 4-simplex $R\subset\Hb^4$, described in Section \ref{subsec:hyperbolic_rectified}. If two vertices of $K_6$ are connected by an edge of label $i\in\{1,\ldots,5\}$, then we glue together the facets $F_i$ of the two corresponding copies of $R$ using the identity as a gluing map.

\begin{proposition}[{\cite[Section 3]{KS}}]\label{prop:Building_Block}
Let $B$ be the building block constructed above.
\begin{itemize}

\item\label{prop:B_mfd} The space $B$ is a non-orientable, complete, finite-volume hyperbolic $4$-manifold with non-compact totally geodesic boundary.

\item\label{prop:B_bdry} The boundary $\partial B$ of $B$ has exactly 5 connected components $\partial_1,\ldots,\partial_5$, each tessellated by the facets $F'_i$ of the copies of $R$ in $B$.

\item\label{prop:B_cusps} The hyperbolic manifold $B$ has exactly 10 cusps $C_{ij}$, $i<j\in\{1,\ldots,5\}$. Each cusp section $S_{ij}$ is diffeomorphic to $K\times [0,1]$, where $K$ denotes the Klein bottle. One boundary component of $S_{ij}$ is contained in $\partial_i$, and the other in $\partial_j$.
\end{itemize}
\end{proposition}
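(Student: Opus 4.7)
\emph{Proof proposal.} The proof is a face-by-face link-verification in the spirit of Poincar\'e's polyhedron theorem for cusped hyperbolic manifolds with totally geodesic boundary. A key combinatorial observation used throughout is that, for any proper $5$-edge-coloring of $K_6$, the union $M_i\cup M_j$ of any two color classes is automatically a Hamiltonian $6$-cycle: being a $2$-regular graph on $6$ vertices whose cycles must alternate the colors $i$ and $j$ (hence have even length), it cannot decompose into smaller even cycles summing to $6$, and so is a single $6$-cycle.

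For the hyperbolic manifold structure, I would verify the link condition at each face of codimension at least $2$. Since $F_i\cap F'_i=\varnothing$, the ridges of $R$ lying in the interior of $B$ are exactly those of type $F_i\cap F_j$; around such a ridge, the gluings alternate between $F_i$ and $F_j$, cycling through the $6$ copies of $R$ indexed by the Hamiltonian $6$-cycle $M_i\cup M_j$, so the total dihedral angle is $6\cdot\nicefrac{\pi}{3}=2\pi$. At a ridge $F_i\cap F'_j$, only two copies of $R$ are glued along $F_i$, with total dihedral angle $\nicefrac{\pi}{2}+\nicefrac{\pi}{2}=\pi$, so $\partial B$ is locally totally geodesic. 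The link condition at the ideal vertices is absorbed into the cusp analysis below. Non-orientability of $B$ follows from $K_6$ not being bipartite: realising each ``identity'' gluing as a reflection across the hyperplane supporting $F_i$, orientation reversals accumulate around any odd cycle of $K_6$.

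For the boundary components, the facets $F'_j$ are never identified with each other, hence $\partial B=\bigsqcup_j\partial_j$ decomposes by label. The component $\partial_j$ contains all translates of $F'_j$ obtained by successive gluings along $F_i$, $i\neq j$; since the subgraph of $K_6$ with edges of colors in $\{1,\ldots,5\}\smallsetminus\{j\}$ is $4$-regular on $6$ vertices and thus connected, $\partial_j$ is connected. So $\partial B$ has exactly $5$ components, each a non-compact hyperbolic $3$-manifold tessellated by six $\nicefrac{\pi}{3}$-angled ideal tetrahedra.

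For the cusps, Lemma \ref{lem:combi} labels an ideal vertex of $R$ by $\{F'_i, F'_j, F_k, F_l, F_m\}$ with $\{i,j,k,l,m\}=\{1,\ldots,5\}$. Under the gluings along $F_k, F_l, F_m$, its orbit sweeps the copies of $R$ indexed by the subgraph $M_k\cup M_l\cup M_m$, which is connected as the complement of the Hamiltonian $6$-cycle $M_i\cup M_j$. Thus each orbit has exactly $6$ vertices (one per copy of $R$), and the cusps are in bijection with the $\binom{5}{2}=10$ unordered pairs $\{i,j\}$. Each cusp section is assembled from $6$ Euclidean right triangular prisms (the vertex links of $R$) glued along their rectangular faces according to $M_k, M_l, M_m$, with two ``triangular ends'' lying in $\partial_i$ and $\partial_j$ respectively. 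The main obstacle is the final step---identifying this flat $3$-manifold with $K\times[0,1]$---for which I would verify that each end is triangulated by $6$ equilateral triangles meeting $6$ to a vertex (hence a closed flat surface with $\chi=0$), check non-orientability by exhibiting an orientation-reversing loop in the gluing graph, and then recognise the section as a product by explicit inspection of the pattern of adjacencies between the two ends.
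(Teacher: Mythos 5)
The paper does not actually prove Proposition \ref{prop:Building_Block}: it is quoted verbatim from Kolpakov--Slavich \cite[Section 3]{KS}, so there is no internal argument to compare against. Your verification is the natural one (and is essentially the argument of \cite{KS}): the key combinatorial facts are all correct. In particular, $M_i\cup M_j$ is indeed always a Hamiltonian $6$-cycle (two edge-disjoint perfect matchings cannot produce a $2$-cycle, hence no $4+2$ splitting), which gives total angle $6\cdot\nicefrac{\pi}{3}=2\pi$ around each interior ridge $F_i\cap F_j$; the angle $\pi$ at the ridges $F_i\cap F'_j$ makes $\partial B$ totally geodesic; non-orientability via non-bipartiteness of $K_6$ is right, since the holonomy around a cycle of length $k$ in the dual graph is a product of $k$ reflections; the count of $5$ boundary components and $10$ cusps follows from the connectivity statements you give (a $4$-regular simple graph on $6$ vertices cannot split into two components, and $M_k\cup M_l\cup M_m$ already contains the spanning $6$-cycle $M_k\cup M_l$). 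The only step you leave as a to-do, the identification $S_{ij}\cong K\times[0,1]$, closes more cleanly than by ``inspection of adjacencies'': since every gluing is the identity, each rectangular face $e\times[0,1]$ of a prism $T\times[0,1]$ is matched by $\mathrm{id}_e\times\mathrm{id}_{[0,1]}$, so the product structure is preserved globally and $S_{ij}\cong\Sigma\times[0,1]$, where $\Sigma$ is the surface obtained by gluing the six triangular ends along the graph $M_k\cup M_l\cup M_m$. That graph is the complement in $K_6$ of the $6$-cycle $M_i\cup M_j$, i.e.\ the triangular prism graph; it contains $3$-cycles, so the same bipartiteness argument as for $B$ shows $\Sigma$ is non-orientable, and your Euler characteristic count ($6$ faces, $9$ edges, $3$ vertices) gives $\chi(\Sigma)=0$, whence $\Sigma$ is a Klein bottle. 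With that inserted, your proof is complete.
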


Now, consider again the edge-labelled graph $K_6$ in Figure \ref{fig:K6}. For each vertex of $K_6$, take a copy of $B$.
If two vertices of $K_6$ are joined by an edge of label $i\in\{1,\ldots,5\}$, then we glue together the boundary components $\partial_i$ of the two corresponding copies of $B$ using the identity as a gluing map.
Let us call $M'$ the resulting cusped hyperbolic manifold (without boundary), and $M$ its orientable double cover.

\begin{proposition}\label{prop:M}
The orientable hyperbolic $4$-manifold $M$ has exactly 10 cusps, each with section diffeomorphic to the $3$-torus.
\end{proposition}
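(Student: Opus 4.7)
The plan is to analyse the non-orientable manifold $M'$ first and then lift to its orientable double cover $M$.

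First, I would count the cusps of $M'$. The cusps of the six copies of $B$ (indexed by the vertices of $K_6$) get identified by the boundary gluings: a cusp $C_{ij}^{v}$ is identified with $C_{ij}^{v'}$ whenever $v$ and $v'$ are joined in $K_6$ by an edge of colour $i$ or $j$. So a cusp of $M'$ with labels $\{i,j\}$ corresponds to a connected component of the subgraph $H_{ij} \subset K_6$ consisting of the edges of colour $i$ or $j$. Since the colouring is proper (Figure \ref{fig:K6}), $H_{ij}$ is the union of two disjoint perfect matchings of $K_6$, hence a simple $2$-regular graph on six vertices whose cycles alternate colours and therefore have even length at least $4$. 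The only decomposition of $6$ into such lengths is a single $6$-cycle, so $H_{ij}$ is connected; consequently all six cusps labelled $\{i,j\}$ merge into one cusp of $M'$, and $M'$ has $\binom{5}{2}=10$ cusps in total.

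Next, I would identify the cusp sections of $M'$: for each unordered pair $\{i,j\}$, the corresponding section is built by cyclically gluing the six pieces $S_{ij}^{v} \cong K \times [0,1]$ along their boundary Klein bottles, following the $6$-cycle $H_{ij}$. Because the boundary gluings of the copies of $B$ are by the identity, so are the induced identifications on the Klein bottle boundaries. Hence the resulting Klein bottle bundle over $S^1$ has trivial monodromy, and each cusp section of $M'$ is homeomorphic to $K \times S^1$; in particular it is non-orientable.

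Finally, since every cusp section of $M'$ is non-orientable, each pulls back to a single cusp of $M$ whose section is its orientation double cover. The orientation double cover of $K$ is $T^2$, and $S^1$ is already orientable, so the section of each cusp of $M$ is $T^2 \times S^1 = T^3$. Thus $M$ has exactly $10$ cusps, each with section a $3$-torus. The main point requiring care is the triviality of the monodromy of the Klein bottle bundle in the second step, but this follows directly from the fact that the gluing maps in the construction of $M'$ are identity maps.
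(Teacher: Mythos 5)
Your proof is correct and follows essentially the same route as the paper's: count the cusps of $M'$ via the alternating $6$-cycles in $K_6$, observe that the identity gluings force each cusp section of $M'$ to be $K\times\Sb^1$, and conclude by passing to the orientation double cover. The only difference is that you justify the connectedness of the bicoloured subgraph $H_{ij}$ by a general parity argument about unions of two perfect matchings (valid for any proper $5$-edge-colouring of $K_6$), where the paper simply reads the $6$-cycle off Figure~\ref{fig:K6}; this is a welcome extra precision but not a different proof.
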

\begin{proof}
We obtain the cusps of $M'$ by gluing together the cusps of the copies of $B$, and the gluing maps are induced by the identity. Hence, each cusp section of $M'$ must be diffeomorphic to $K\times\Sb^1$. By construction, for each pair $i<j\in\{1,\ldots,5\}$, the simple cycle (of length $6$) in the graph $K_6$ with edges labelled alternately by $i$ and $j$ corresponds to a cusp of $M'$, thus $M'$ has exactly 10 cusps.
Since the cusps of $M'$ are non-orientable, each cusp section of $M$ is the orientable double covering of a cusp section of $M'$. In particular, $M$ has precisely 10 cusps, each with $3$-torus section.
\end{proof}

\begin{remark} \label{rem:orb_cov_M}
The natural map $M\to \mathcal{O}_R$ is an orbifold covering of degree $6\cdot6\cdot2=72$, where $\mathcal{O}_R$ denotes the orbifold associated to $R$.
\end{remark}

\subsection{Topology of the filling} \label{sec:X}
In this subsection, we build the manifold $X$ of Theorem \ref{thm:main} topologically. The desired (singular) geometric structures on $X$ will be given in the next subsection.

\medskip

Let $Q \subset \R^4$ be a bitruncated $4$-simplex. We define $X$ in the same way as the manifold $M$ (see Section \ref{sec:M}), but substituting the ideal rectified simplex $R$ with the bitruncated simplex $Q$.

\begin{proposition}
The space $X$ is a closed, orientable, smooth $4$-manifold, containing 10 pairwise disjoint embedded $2$-tori whose complement is diffeomorphic to the cusped hyperbolic manifold $M$.
\end{proposition}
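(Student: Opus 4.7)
The plan is to exhibit a natural stratum-preserving surjection $\bar q\colon X \to \widehat M$, where $\widehat M$ denotes the end-compactification of $M$ obtained by adding one point per cusp, which collapses each filling torus to the corresponding cusp point and is a homeomorphism elsewhere. The starting point is Remark~2.26: collapsing each triangular filling ridge $F'_i \cap F'_j$ ($i \neq j$) of the bitruncated 4-simplex $Q$ to a point and deleting that point recovers the ideal rectified 4-simplex $R$. This gives a continuous surjection $q \colon Q \to \overline R \subset \overline{\Hb^4}$ sending each filling ridge to the corresponding ideal vertex of $R$ and being a homeomorphism on $Q \smallsetminus \bigcup_{i\neq j}(F'_i\cap F'_j)$. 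Because the gluing schemes for $X$ and $M$ are identical (the same $K_6$-pattern labelled by $\{1,\dots,5\}$, the same identity gluing maps, the same orientable double cover), the maps $q$ assemble into the desired $\bar q$.

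\textbf{Step 1: manifold structure.} Let $\Sigma \subset X$ denote the image of the filling ridges under all gluings. Away from $\Sigma$, the map $\bar q$ is a local diffeomorphism onto $M$, so both smoothness and the manifold property on $X \smallsetminus \Sigma$ are inherited from $M$. Along $\Sigma$, a neighborhood in a single $Q$ of an interior point of a filling ridge has the form $(\text{2-disk along the ridge}) \times (\text{dihedral sector of angle }\alpha)$, and in the Coxeter case of Proposition~3.1 the filling-ridge angle $\alpha$ appropriate to the manifold combinatorics is $\nicefrac{\pi}{3}$. One then checks that the $K_6$-gluings identify exactly six such sectors cyclically around each point of $\Sigma$, yielding a full $\R^2$ normal slice and therefore a smooth chart $\R^2 \times \R^2$. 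Orientability of $X$ is automatic from its construction as an orientable double cover.

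\textbf{Step 2: topology of $\Sigma$.} The connected components of $\Sigma$ are naturally indexed by unordered pairs $\{i,j\}\subset\{1,\dots,5\}$, so there are exactly ten of them, $T_{ij}$. Under $\bar q$, a small tubular neighborhood of $T_{ij}$ is sent onto a neighborhood of the corresponding cusp point of $\widehat M$, so its boundary (the unit normal circle bundle of $T_{ij}$) is diffeomorphic to the corresponding cusp section of $M$, which is a 3-torus by Proposition~3.6. A standard fundamental-group argument — if $S$ is a closed surface carrying a circle bundle with total space $T^3$, then the exact sequence $1 \to \Z \to \Z^3 \to \pi_1(S) \to 1$ forces $\pi_1(S) = \Z^2$ with trivial action on the fibre, whence $S \cong T^2$ — shows that each $T_{ij}$ is a 2-torus. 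Distinct $T_{ij}$ are disjoint because distinct filling-ridge types in $Q$ are not identified by any gluing.

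\textbf{Step 3: the complement.} Combining Steps 1 and 2, the restriction $\bar q\big|_{X\smallsetminus\Sigma}\colon X\smallsetminus\Sigma\to M$ is a bijective smooth local diffeomorphism, hence a diffeomorphism.

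\textbf{Main obstacle.} The delicate point is the closure check in Step~1: ensuring that $X$ is a manifold (rather than merely a stratified pseudomanifold) along $\Sigma$. This requires bookkeeping of how many copies of each filling triangle $F'_i \cap F'_j$ are identified by the two gluing stages (within a single $B_X$ and across the six $B_X$'s) and of the cumulative dihedral angle they contribute at a fixed point of $\Sigma$. The high symmetry of the $K_6$-pattern makes the verification tractable, but it is the only step that is not immediate.
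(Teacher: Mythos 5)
Your overall strategy --- collapse the filling ridges to recover $\overline R$ from $Q$, assemble these collapses into a map $X\to\widehat M$, identify $X\smallsetminus\Sigma$ with $M$, and recover the $2$-tori from the $3$-torus cusp sections via the circle-bundle argument --- is the same as the paper's, and your Steps 2 and 3 are fine (indeed more detailed than the paper on the circle-bundle point). The problem is Step 1, which is where the real content of the proposition lies. Your local model for $X$ near $\Sigma$, namely $(\text{2-disk in the ridge})\times(\text{six dihedral sectors closing up})$, only exists at \emph{interior} points of the triangular filling ridges. Each filling ridge $F'_i\cap F'_j$ is a triangle; its edges and vertices are lower-dimensional faces of the polyhedral complex $X$, and at such a point the pair $(Q,\,F'_i\cap F'_j)$ is a cone on $(L,e)$, where $L$ is the tetrahedral vertex link of $Q$ (Figure \ref{fig:link_comb}, right) and $e$ is one of its edges. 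There is no polyhedral product structure there to propagate through the gluings, so ``checking that six sectors close up'' verifies nothing at those points, and being a manifold off a codimension-$3$ subset does not imply being a manifold.

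This is exactly where the paper does its work: it shows the link of each \emph{vertex} of the complex $X$ is $\Sb^3$, by observing that the first gluing stage (forming $B$) wraps $6$ copies of the tetrahedron $L$ cyclically around the edge $\{i_3,i_4\}$, producing a complex $L'$ homeomorphic to the closed $3$-disc, and the second stage wraps $6$ copies of $L'$ cyclically around a circle in $\partial L'$, producing $\Sb^3$; manifoldness along the edges and at interior ridge points then follows, since links of positive-dimensional faces are iterated vertex links. Your ``Main obstacle'' paragraph shows you sense the difficulty, but the bookkeeping you propose (``cumulative dihedral angle at a fixed point of $\Sigma$'') is still the interior-point picture and cannot be completed at the $0$- and $1$-skeleton of the induced triangulation of $\Sigma$; you need the two-stage vertex-link computation (or an equivalent). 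A smaller point: the dihedral angle $\alpha=\nicefrac{\pi}{3}$ is irrelevant to this purely topological statement --- a cone on a circle is a disc whatever the total angle --- so invoking the geometry of $P_\alpha$ here is a distraction.
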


\begin{proof}
To prove that the polyhedral complex $X$ is a smooth manifold, it suffices to show that the link of each vertex is homeomorphic to the 3-sphere. Recall that the polytope $Q$ is vertex transitive. The vertex link $L$ of $Q$ is the tetrahedron in the right of  Figure \ref{fig:link_comb}.

\medskip

When we glue the first 6 copies of $Q$ to form $B$, for each vertex of $Q$, 6 copies of $L$ are glued cyclically around one edge, to form a polyhedral complex $L'$ homeomorphic to the closed 3-disc $D^3$.
Thus, $B$ is a 4-manifold with boundary.
Note that $R$ is homeomorphic to the complement $Q \smallsetminus \bigcup_{i<j}F'_j\cap F'_j$ of the filling ridges; in particular, $B$ is non-orientable (recall Proposition \ref{prop:Building_Block}).
When we glue 6 copies of $B$ to get $X'$, for each vertex of the complex, 6 copies of $L'$ are glued cyclically around a circle $C\subset\partial L'$.
The resulting polyhedral complex is clearly homeomorphic to the 3-sphere, so $X'$ is a manifold. Its orientation cover $X$ is thus a manifold.

\medskip

Now, the cusped hyperbolic manifold $M$ is diffeomorphic to the interior of a compact manifold $\overline M$ with boundary $\partial\overline M$ consisting of 10 3-tori (recall Proposition \ref{prop:M}).
Let $\Sigma\subset X$ be the union of the copies of the filling ridges of $Q$. Clearly, $M$ is diffeomorphic to $X\smallsetminus\Sigma$. Moreover, $\partial\overline M$ is diffeomorphic to the boundary of a regular neighbourhood of $\Sigma$ in $X$.
Thus, $\Sigma$ consists of 10 2-tori.
\end{proof}

We conclude the subsection with some additional information about $X$.

\begin{remark}\label{rem:chi_X}
The 4-manifold $X$ has Euler characteristic $\chi(X)=12$. Indeed, $\mathcal O_R$ has orbifold Euler characteristic $\chi^{\mathrm{orb}}(\mathcal O_R)=\frac16$ \cite[Appendix 1]{KS}, and the covering $M\to \mathcal O_R$ has degree 72 (recall Remark \ref{rem:orb_cov_M}), so $\chi(M)=12$. Since $\chi(\partial\overline M)=0$ and $\chi(\Sigma)=0$, we have $\chi(X)=\chi(M)=12$.
\end{remark}

\begin{remark}\label{rem:6-filling}
The manifold $X$ is a filling of $M$, which has a maximal cusp section $S$ such that each filling curve in $S$ has length $6$. The reason is that the maximal, and maximally symmetric, cusp section of $\mathcal O_R$ consists of 10 Euclidean prisms, each with all edges of length $1$ \cite[Section 3.2]{KS}. Each of the filling curves of $X$ is made of $6$ copies of the height of such prism. Since $6<2\pi$, one cannot apply directly the Gromov-Thurston $2\pi$ theorem to conclude, for instance, that $X$ is aspherical. We will see later that, being convex projective, $X$ is in fact aspherical. 
\end{remark}

\begin{remark}
The 6 theorem, independently obtained by Agol \cite{agol} and Lackenby \cite{lackenby}, shows that a filling of a cusped hyperbolic 3-manifold carries a hyperbolic structure as soon as the filling curves are of length \textit{strictly} greater than $6$. This is thus an improvement of Gromov and Thurston's $2\pi$  theorem in dimension three. It is an open question whether or not it is possible to generalise the 6 theorem to higher dimension as follows: ``The fundamental group of a filling of a cusped hyperbolic $n$-manifold is relatively hyperbolic with respect to the collection of subgroups associated to the inserted $(n-2)$-submanifolds, as soon as the filling curves are of length $>6$.''

Note that in dimension $n=3$ the bound of 6 is sharp, as shown by Agol \cite[Section 7]{agol}. Remark \ref{rem:6-filling} shows that the same bound would be sharp in dimension $n=4$. 
\end{remark}

\subsection{Cone-manifold structures on the filling} \label{sec:sigma_alpha}

We now conclude the proof of Theorem \ref{thm:main}.
Let $\Sigma\subset X$ be the union of the copies of the filling ridges $F'_i\cap F'_j$. We first show item \eqref{item:thmA_cone}, giving a path of projective cone-manifold structures on $X$ with the desired properties.

\medskip

In Proposition \ref{prop:P_theta}, we built a path of structures $(0,\frac\pi3]\ni\alpha\mapsto P_\alpha$ of mirror polytope on the bitruncated simplex $Q$. Since the manifold $X$ is built by pairing the facets of some copies of $Q$ through the map induced by the identity, for each $\alpha$ we have a well-defined projective structure on the complement in $X$ of the ridges of the copies of $Q$.
Indeed, the projective structures of the copies of $P_\alpha\smallsetminus\bigcup(\mbox{ridges})$ match well via the projective reflections associated to the facets of $P_\alpha$. We want to show that this projective structure extends to a projective structure on $X\smallsetminus\Sigma$, and on the whole $X$ as projective cone-manifold structures with cone angle $\theta=6\cdot\alpha$ along $\Sigma$. 

\medskip

The link $L$ of a vertex of $P_\alpha$ is a mirror tetrahedron. Its non-right dihedral angles are $\alpha$ and $\frac\pi3$ along its two opposite edges $\{i'_1,i'_2\}$ and $\{i_3,i_4\}$, respectively (see Figure \ref{fig:link_geom}--right).
Recall Section \ref{sec:cone-mfds} about projective cone-manifolds.
Some copies of $L$ are glued together in $X$ to form a 3-sphere, which we now show to be the projective cone 3-manifold $\Sb^1*C_{\theta}$, where $\theta=6\cdot\alpha$.

\medskip

The manifold $X$ was built in three steps.
At the first step (when gluing 6 copies of $Q$ to build the block $B$), 6 copies of $L$ are glued cyclically along its edge $\{i_3,i_4\}$.
The resulting space $L'$ is the intersection of two half-spaces of $\Sb^3$ with \lq\lq dihedral angle\rq\rq\footnote{Since to each hyperplane is associated a reflection, it makes sense to talk about the dihedral angle, even if $L'$ is not properly convex.} $\alpha$. 
At the second step (when gluing 6 copies of $B$ to build $X'$), 6 copies of $L'$ are glued cyclically along its edge. Indeed, the cycle in the graph $K_6$ with edges labelled alternately by $i_1$ and $i_2$ has length 6.
The resulting space is thus $\Sb^1*C_{\theta}$.
The third step is just the orientation double covering $X\to X'$, so the local structure of $X$ is the same as that of $X'$.

\medskip

The union of the copies of the filling ridge $F'_{i_1}\cap F'_{i_2}$ of $Q$ in $X$ is a component $T$ of $\Sigma$ (a 2-torus).
The holonomy of a meridian of $T$ in $X$ is $(\sigma'_{i_1}\sigma'_{i_2})^3$, where $\sigma'_i$ is the projective reflection (depending on $\alpha$) associated to the facet $F'_i$ of the mirror polytope $P_\alpha$.

\medskip

Since $(X,\Sigma)$ is locally modeled on $(\Sb^2*C_\theta,\Sb^2)$, we have a projective cone structure $\sigma_\theta$ for each $\theta\in(0,2\pi]$. Since $\Sb^2*C_{2\pi}=\Sb^4$, the projective structure $\sigma_{2 \pi}$ is non-singular, and each component of $\Sigma$ is totally geodesic in $X$ (see Remark \ref{rem:nonsingular}). The associated path of projective cone-manifold structures $\theta\mapsto\sigma_\theta$ on $X$ is analytic because the path of Cartan matrices $t\mapsto C_t$ is analytic.

\medskip

We have shown item \eqref{item:thmA_cone} of Theorem \ref{thm:main}. Clearly, item \eqref{item:thmA_hyperbolic} follows from Proposition \ref{prop:M}. Item \eqref{item:thmA_Euler} is shown in Remark \ref{rem:chi_X}, and item \eqref{item:thmA_curve} in Remark \ref{rem:6-filling}.

\medskip

Let us now fix an integer $p\geq3$.
By Corollary \ref{cor:coxeter}, the mirror polytope $P_{\nicefrac\pi p}$ is a convex projective orbifold.
If moreover $p=3m$ (i.e. $\theta=\nicefrac{2\pi}m$), the natural map $X\to P_{\nicefrac\pi p}$ gives $X$ a structure of convex projective orbifold $\Omega_m/_{\Gamma_m}$ with cone structure $\sigma_{\nicefrac{2\pi}m}$. We have thus shown the first part of item \eqref{item:thmA_orb}.

\begin{remark}
We already know that the convex projective manifold $X$ is indecomposable. 
Another way to see this is as follows. Since $X$ covers the Coxeter polytope $P_{\nicefrac\pi 3}$, and the Coxeter group $W_{3}$ acts strongly irreducibly on $\R^5$ because the Cartan matrix of $P_{\nicefrac\pi 3}$ is indecomposable and $W_3$ is not virtually abelian (see e.g. \cite[Theorem 2.18]{cox_in_hil}), so does $\Gamma_1\cong\pi_1(X)$.
\end{remark}

Let $T_1,\ldots,T_{10}$ be the components of $\Sigma$. Being totally geodesic for each $T_i$, the natural map $\pi_1T_i\to\Gamma_m$ induced by the inclusion $T_i\subset X$ is injective.
For $m \geqslant 2$, the group $\Gamma_m$ is relatively hyperbolic with respect to $\{\pi_1 T_i\}_i$. Indeed, by Proposition \ref{prop:rel_hyp}, the Coxeter group $W_{3m}$, which is the orbifold fundamental group of $P_{\nicefrac\pi{3m}}$, is relatively hyperbolic with respect to the collection $\{ (W_{3m})_U \mid U \in \Tc_{3m} \}$ which corresponds to the fundamental groups of the $T_1,\ldots,T_{10}$. The proof of item \eqref{item:thmA_orb} is complete.

\medskip

It remains to show item \eqref{item:thmA_rel}. For $m=1$ (i.e $\alpha=\nicefrac\pi 3$ and $\theta = 2 \pi$) there is another collection of totally geodesic tori $T'_1,\ldots,T'_{10}$ tiled by the ridges $F_{i} \cap F_{j}$. Being totally geodesic, also $T'_i$ is $\pi_1$-injective. This time, the group $\Gamma_1\cong\pi_1X$ is relatively hyperbolic with respect to $\{\pi_1 T_i,\,\pi_1 T'_i\}_i$. Indeed, by Proposition \ref{prop:rel_hyp}, the Coxeter group $W_{3}$, which is the orbifold fundamental group of $P_{\nicefrac\pi3}$, is relatively hyperbolic with respect to the collection $\{ (W_{3})_U \mid U \in \Tc_{3} \}$ which corresponds to the fundamental groups of the $T_1,\ldots,T_{10}$ and $T'_1,\ldots,T'_{10}$. Finally, for every $T\in\{T_1,\ldots,T_{10}\}$ and $T'\in\{T'_1,\ldots,T'_{10}\}$, the tori $T$ and $T'$ are transverse (sometimes $T\cap T'=\varnothing$) in $X$, since the ridges $F'_i\cap F'_j$ and $F_k\cap F_\ell$ do so in $\Sb^4$. Also, in the universal cover $\Omega$, the lifts of $T$ and $T'$ are transverse.

\medskip

The proof of Theorem \ref{thm:main} is complete.

\appendix

\section{The spherical and affine Coxeter diagrams}\label{classi_diagram}

For the reader’s convenience, we reproduce below the list of the irreducible spherical and irreducible affine Coxeter diagrams.

\newcommand{\scalee}{1.5}
\begin{table}[ht!]
\centering
\begin{minipage}[b]{7.7cm}
\centering
\begin{tikzpicture}[thick,scale=0.55, every node/.style={transform shape}]
\node[draw,circle] (A1) at (0,0) {};
\node[draw,circle,right=.8cm of A1] (A2) {};
\node[draw,circle,right=.8cm of A2] (A3) {};
\node[draw,circle,right=1cm of A3] (A4) {};
\node[draw,circle,right=.8cm of A4] (A5) {};
\node[left=.8cm of A1,scale=\scalee] {$\Huge A_n\ (n\geq 1)$};

\draw (A1) -- (A2)  node[above,midway] {};
\draw (A2) -- (A3)  node[above,midway] {};
\draw[loosely dotted,thick] (A3) -- (A4) node[] {};
\draw (A4) -- (A5) node[above,midway] {};


\node[draw,circle,below=1.2cm of A1] (B1) {};
\node[draw,circle,right=.8cm of B1] (B2) {};
\node[draw,circle,right=.8cm of B2] (B3) {};
\node[draw,circle,right=1cm of B3] (B4) {};
\node[draw,circle,right=.8cm of B4] (B5) {};
\node[left=.8cm of B1,scale=\scalee] {$B_n\ (n\geq 2)$};

\draw (B1) -- (B2)  node[above,midway] {$4$};
\draw (B2) -- (B3)  node[above,midway] {};
\draw[loosely dotted,thick] (B3) -- (B4) node[] {};
\draw (B4) -- (B5) node[above,midway] {};


\node[draw,circle,below=1.5cm of B1] (D1) {};
\node[draw,circle,right=.8cm of D1] (D2) {};
\node[draw,circle,right=1cm of D2] (D3) {};
\node[draw,circle,right=.8cm of D3] (D4) {};
\node[draw,circle, above right=.8cm of D4] (D5) {};
\node[draw,circle,below right=.8cm of D4] (D6) {};
\node[left=.8cm of D1,scale=\scalee] {$D_n\ (n\geq 4)$};

\draw (D1) -- (D2)  node[above,midway] {};
\draw[loosely dotted] (D2) -- (D3);
\draw (D3) -- (D4) node[above,midway] {};
\draw (D4) -- (D5) node[above,midway] {};
\draw (D4) -- (D6) node[below,midway] {};


\node[draw,circle,below=1.2cm of D1] (I1) {};
\node[draw,circle,right=.8cm of I1] (I2) {};
\node[left=.8cm of I1,scale=\scalee] {$I_2(p)\ (p\geq 5)$};

\draw (I1) -- (I2)  node[above,midway] {$p$};


\node[draw,circle,below=1.2cm of I1] (H1) {};
\node[draw,circle,right=.8cm of H1] (H2) {};
\node[draw,circle,right=.8cm of H2] (H3) {};
\node[left=.8cm of H1,scale=\scalee] {$H_3$};

\draw (H1) -- (H2)  node[above,midway] {$5$};
\draw (H2) -- (H3)  node[above,midway] {};


\node[draw,circle,below=1.2cm of H1] (HH1) {};
\node[draw,circle,right=.8cm of HH1] (HH2) {};
\node[draw,circle,right=.8cm of HH2] (HH3) {};
\node[draw,circle,right=.8cm of HH3] (HH4) {};
\node[left=.8cm of HH1,scale=\scalee] {$H_4$};

\draw (HH1) -- (HH2)  node[above,midway] {$5$};
\draw (HH2) -- (HH3)  node[above,midway] {};
\draw (HH3) -- (HH4)  node[above,midway] {};


\node[draw,circle,below=1.2cm of HH1] (F1) {};
\node[draw,circle,right=.8cm of F1] (F2) {};
\node[draw,circle,right=.8cm of F2] (F3) {};
\node[draw,circle,right=.8cm of F3] (F4) {};
\node[left=.8cm of F1,scale=\scalee] {$F_4$};

\draw (F1) -- (F2)  node[above,midway] {};
\draw (F2) -- (F3)  node[above,midway] {$4$};
\draw (F3) -- (F4)  node[above,midway] {};


\node[draw,circle,below=1.2cm of F1] (E1) {};
\node[draw,circle,right=.8cm of E1] (E2) {};
\node[draw,circle,right=.8cm of E2] (E3) {};
\node[draw,circle,right=.8cm of E3] (E4) {};
\node[draw,circle,right=.8cm of E4] (E5) {};
\node[draw,circle,below=.8cm of E3] (EA) {};
\node[left=.8cm of E1,scale=\scalee] {$E_6$};

\draw (E1) -- (E2)  node[above,midway] {};
\draw (E2) -- (E3)  node[above,midway] {};
\draw (E3) -- (E4)  node[above,midway] {};
\draw (E4) -- (E5)  node[above,midway] {};
\draw (E3) -- (EA)  node[left,midway] {};


\node[draw,circle,below=1.8cm of E1] (EE1) {};
\node[draw,circle,right=.8cm of EE1] (EE2) {};
\node[draw,circle,right=.8cm of EE2] (EE3) {};
\node[draw,circle,right=.8cm of EE3] (EE4) {};
\node[draw,circle,right=.8cm of EE4] (EE5) {};
\node[draw,circle,right=.8cm of EE5] (EE6) {};
\node[draw,circle,below=.8cm of EE3] (EEA) {};
\node[left=.8cm of EE1,scale=\scalee] {$E_7$};

\draw (EE1) -- (EE2)  node[above,midway] {};
\draw (EE2) -- (EE3)  node[above,midway] {};
\draw (EE3) -- (EE4)  node[above,midway] {};
\draw (EE4) -- (EE5)  node[above,midway] {};
\draw (EE5) -- (EE6)  node[above,midway] {};
\draw (EE3) -- (EEA)  node[left,midway] {};


\node[draw,circle,below=1.8cm of EE1] (EEE1) {};
\node[draw,circle,right=.8cm of EEE1] (EEE2) {};
\node[draw,circle,right=.8cm of EEE2] (EEE3) {};
\node[draw,circle,right=.8cm of EEE3] (EEE4) {};
\node[draw,circle,right=.8cm of EEE4] (EEE5) {};
\node[draw,circle,right=.8cm of EEE5] (EEE6) {};
\node[draw,circle,right=.8cm of EEE6] (EEE7) {};
\node[draw,circle,below=.8cm of EEE3] (EEEA) {};
\node[left=.8cm of EEE1,scale=\scalee] {$E_8$};

\draw (EEE1) -- (EEE2)  node[above,midway] {};
\draw (EEE2) -- (EEE3)  node[above,midway] {};
\draw (EEE3) -- (EEE4)  node[above,midway] {};
\draw (EEE4) -- (EEE5)  node[above,midway] {};
\draw (EEE5) -- (EEE6)  node[above,midway] {};
\draw (EEE6) -- (EEE7)  node[above,midway] {};
\draw (EEE3) -- (EEEA)  node[left,midway] {};

\draw (0,-18.5) node[]{} ;
\end{tikzpicture}
\caption{The irreducible spherical Coxeter diagrams}
\label{table:spheri_diag}
\end{minipage}
\begin{minipage}[t]{7.7cm}
\centering
\begin{tikzpicture}[thick,scale=0.55, every node/.style={transform shape}]
\node[draw,circle] (A1) at (0,0) {};
\node[draw,circle,above right=.8cm of A1] (A2) {};
\node[draw,circle,right=.8cm of A2] (A3) {};
\node[draw,circle,right=.8cm of A3] (A4) {};
\node[draw,circle,right=.8cm of A4] (A5) {};
\node[draw,circle,below right=.8cm of A5] (A6) {};
\node[draw,circle,below left=.8cm of A6] (A7) {};
\node[draw,circle,left=.8cm of A7] (A8) {};
\node[draw,circle,left=.8cm of A8] (A9) {};
\node[draw,circle,left=.8cm of A9] (A10) {};

\node[left=.8cm of A1,scale=\scalee] {$\widetilde{A}_n\ (n\geq 2)$};

\draw (A1) -- (A2)  node[above,midway] {};
\draw (A2) -- (A3)  node[above,midway] {};
\draw (A3) -- (A4) node[] {};
\draw (A4) -- (A5) node[above,midway] {};
\draw (A5) -- (A6) node[] {};
\draw (A6) -- (A7) node[] {};
\draw (A7) -- (A8) node[] {};
\draw[loosely dotted,thick] (A8) -- (A9) node[] {};
\draw (A9) -- (A10) node[] {};
\draw (A10) -- (A1) node[] {};


\node[draw,circle,below=1.7cm of A1] (B1) {};
\node[draw,circle,right=.8cm of B1] (B2) {};
\node[draw,circle,right=.8cm of B2] (B3) {};
\node[draw,circle,right=1cm of B3] (B4) {};
\node[draw,circle,right=.8cm of B4] (B5) {};
\node[draw,circle,above right=.8cm of B5] (B6) {};
\node[draw,circle,below right=.8cm of B5] (B7) {};
\node[left=.8cm of B1,scale=\scalee] {$\widetilde{B}_n\ (n\geq 3)$};

\draw (B1) -- (B2)  node[above,midway] {$4$};
\draw (B2) -- (B3)  node[above,midway] {};
\draw[loosely dotted,thick] (B3) -- (B4) node[] {};
\draw (B4) -- (B5) node[above,midway] {};
\draw (B5) -- (B6) node[above,midway] {};
\draw (B5) -- (B7) node[above,midway] {};


\node[draw,circle,below=1.5cm of B1] (C1) {};
\node[draw,circle,right=.8cm of C1] (C2) {};
\node[draw,circle,right=.8cm of C2] (C3) {};
\node[draw,circle,right=1cm of C3] (C4) {};
\node[draw,circle,right=.8cm of C4] (C5) {};
\node[left=.8cm of C1,scale=\scalee] (CCC) {$\widetilde{C}_n\ (n\geq 3)$};

\draw (C1) -- (C2)  node[above,midway] {$4$};
\draw (C2) -- (C3)  node[above,midway] {};
\draw[loosely dotted,thick] (C3) -- (C4) node[] {};
\draw (C4) -- (C5) node[above,midway] {$4$};


\node[draw,circle,below=1cm of C1] (D1) {};
\node[draw,circle,below right=0.8cm of D1] (D3) {};
\node[draw,circle,below left=0.8cm of D3] (D2) {};
\node[draw,circle,right=.8cm of D3] (DA) {};
\node[draw,circle,right=1cm of DA] (DB) {};
\node[draw,circle,right=.8cm of DB] (D4) {};
\node[draw,circle, above right=.8cm of D4] (D5) {};
\node[draw,circle,below right=.8cm of D4] (D6) {};
\node[below=1.5cm of CCC,scale=\scalee] {$\widetilde{D}_n\ (n\geq 4)$};

\draw (D1) -- (D3)  node[above,midway] {};
\draw (D2) -- (D3)  node[above,midway] {};
\draw (D3) -- (DA) node[above,midway] {};
\draw[loosely dotted] (DA) -- (DB);
\draw (D4) -- (DB) node[above,midway] {};
\draw (D4) -- (D5) node[above,midway] {};
\draw (D4) -- (D6) node[below,midway] {};


\node[draw,circle,below=2.5cm of D1] (I1) {};
\node[draw,circle,right=.8cm of I1] (I2) {};
\node[left=.8cm of I1,scale=\scalee] {$\widetilde{A}_1$};

\draw (I1) -- (I2)  node[above,midway] {$\infty$};


\node[draw,circle,below=1.2cm of I1] (H1) {};
\node[draw,circle,right=.8cm of H1] (H2) {};
\node[draw,circle,right=.8cm of H2] (H3) {};
\node[left=.8cm of H1,scale=\scalee] {$\widetilde{B}_2=\widetilde{C}_2$};

\draw (H1) -- (H2)  node[above,midway] {$4$};
\draw (H2) -- (H3)  node[above,midway] {$4$};


\node[draw,circle,below=1.2cm of H1] (HH1) {};
\node[draw,circle,right=.8cm of HH1] (HH2) {};
\node[draw,circle,right=.8cm of HH2] (HH3) {};
\node[left=.8cm of HH1,scale=\scalee] {$\widetilde{G}_2$};

\draw (HH1) -- (HH2)  node[above,midway] {$6$};
\draw (HH2) -- (HH3)  node[above,midway] {};


\node[draw,circle,below=1.2cm of HH1] (F1) {};
\node[draw,circle,right=.8cm of F1] (F2) {};
\node[draw,circle,right=.8cm of F2] (F3) {};
\node[draw,circle,right=.8cm of F3] (F4) {};
\node[draw,circle,right=.8cm of F4] (F5) {};
\node[left=.8cm of F1,scale=\scalee] {$\widetilde{F}_4$};

\draw (F1) -- (F2)  node[above,midway] {};
\draw (F2) -- (F3)  node[above,midway] {$4$};
\draw (F3) -- (F4)  node[above,midway] {};
\draw (F4) -- (F5)  node[above,midway] {};


\node[draw,circle,below=1.2cm of F1] (E1) {};
\node[draw,circle,right=.8cm of E1] (E2) {};
\node[draw,circle,right=.8cm of E2] (E3) {};
\node[draw,circle,right=.8cm of E3] (E4) {};
\node[draw,circle,right=.8cm of E4] (E5) {};
\node[draw,circle,below=.8cm of E3] (EA) {};
\node[draw,circle,below=.8cm of EA] (EB) {};
\node[left=.8cm of E1,scale=\scalee] {$\widetilde{E}_6$};

\draw (E1) -- (E2)  node[above,midway] {};
\draw (E2) -- (E3)  node[above,midway] {};
\draw (E3) -- (E4)  node[above,midway] {};
\draw (E4) -- (E5)  node[above,midway] {};
\draw (E3) -- (EA)  node[left,midway] {};
\draw (EB) -- (EA)  node[left,midway] {};


\node[draw,circle,below=3cm of E1] (EE1) {};
\node[draw,circle,right=.8cm of EE1] (EEB) {};
\node[draw,circle,right=.8cm of EEB] (EE2) {};
\node[draw,circle,right=.8cm of EE2] (EE3) {};
\node[draw,circle,right=.8cm of EE3] (EE4) {};
\node[draw,circle,right=.8cm of EE4] (EE5) {};
\node[draw,circle,right=.8cm of EE5] (EE6) {};
\node[draw,circle,below=.8cm of EE3] (EEA) {};
\node[left=.8cm of EE1,scale=\scalee] {$\widetilde{E}_7$};

\draw (EE1) -- (EEB)  node[above,midway] {};
\draw (EE2) -- (EEB)  node[above,midway] {};
\draw (EE2) -- (EE3)  node[above,midway] {};
\draw (EE3) -- (EE4)  node[above,midway] {};
\draw (EE4) -- (EE5)  node[above,midway] {};
\draw (EE5) -- (EE6)  node[above,midway] {};
\draw (EE3) -- (EEA)  node[left,midway] {};


\node[draw,circle,below=1.8cm of EE1] (EEE1) {};
\node[draw,circle,right=.8cm of EEE1] (EEE2) {};
\node[draw,circle,right=.8cm of EEE2] (EEE3) {};
\node[draw,circle,right=.8cm of EEE3] (EEE4) {};
\node[draw,circle,right=.8cm of EEE4] (EEE5) {};
\node[draw,circle,right=.8cm of EEE5] (EEE6) {};
\node[draw,circle,right=.8cm of EEE6] (EEE7) {};
\node[draw,circle,right=.8cm of EEE7] (EEE8) {};
\node[draw,circle,below=.8cm of EEE3] (EEEA) {};
\node[left=.8cm of EEE1,scale=\scalee] {$\widetilde{E}_8$};

\draw (EEE1) -- (EEE2)  node[above,midway] {};
\draw (EEE2) -- (EEE3)  node[above,midway] {};
\draw (EEE3) -- (EEE4)  node[above,midway] {};
\draw (EEE4) -- (EEE5)  node[above,midway] {};
\draw (EEE5) -- (EEE6)  node[above,midway] {};
\draw (EEE6) -- (EEE7)  node[above,midway] {};
\draw (EEE8) -- (EEE7)  node[above,midway] {};
\draw (EEE3) -- (EEEA)  node[left,midway] {};

\draw (0,-22.5) node[]{} ;
\end{tikzpicture}
\caption{The irreducible affine Coxeter diagrams}
\label{table:affi_diag}
\end{minipage}
\end{table}

\clearpage


\end{document}